\newtheorem{theorem}{Theorem}
\newcounter{lemcount}
\newtheorem{proposition}[theorem]{Proposition}
\newtheorem{corollary}[theorem]{Corollary}
\newcommand{\qed}{\nobreak \ifvmode \relax \else
\ifdim\lastskip<1.5em \hskip-\lastskip
\hskip1.5em plus0em minus0.5em \fi \nobreak
\vrule height0.4em width0.5em depth0.25em\fi}
\title{{\LARGE \textbf{Event-driven Trajectory Optimization for Data Harvesting in Multi-Agent Systems}}}
\author{ \parbox{3.5 in}{\centering Yasaman Khazaeni and Christos G. Cassandras\\
         Division of Systems Engineering\\ and Center for Information and Systems Engineering\\
         Boston University, MA 02446\\
         {\tt\small yas@bu.edu,cgc@bu.edu}} \thanks{The authors' work
is supported in part by NSF under grants CNS- 1239021, ECCS-1509084, and
IIP-1430145, by AFOSR under grant FA9550-12-1-0113, by ONR under grant
N00014-09-1-1051.}}
\begin{document}
\maketitle
\thispagestyle{empty}
\pagestyle{empty}

\begin{abstract}
We propose a new event-driven method for on-line trajectory optimization to
solve the data harvesting problem: in a two-dimensional mission space, $N$
mobile agents are tasked with the collection of data generated at $M$
stationary sources and delivery to a base with the goal of minimizing expected
collection and delivery delays. We define a new performance measure that
addresses the event excitation problem in event-driven controllers and
formulate an optimal control problem. The solution of this problem provides
some insights on its structure, but it is computationally intractable,
especially in the case where the data generating processes are stochastic. We
propose an agent trajectory parameterization in terms of general function
families which can be subsequently optimized on line through the use of
Infinitesimal Perturbation Analysis (IPA). Properties of the solutions are identified, including robustness with respect to the stochastic data generation process and scalability in the size of the event set characterizing the underlying hybrid dynamical system. Explicit results are provided for the case of elliptical and Fourier series trajectories and comparisons with a state-of-the-art graph-based algorithm are given.

\end{abstract}

\section{Introduction}

Systems consisting of cooperating mobile agents have been extensively studied
and used in a broad spectrum of applications such as environmental sampling
\cite{Corke2010},\cite{Smith2011}, surveillance \cite{Tang2005}, coverage
\cite{Zhong2011},\cite{Chakrabarty2002},\cite{Cardei2005}, persistent
monitoring \cite{Alamdari2013},\cite{Cassandras2013_2}, task assignment
\cite{Panagou14}, and data harvesting and information collection
\cite{Klesh2008},\cite{Ny2008},\cite{Moazzez-Estanjini2012}.

The \emph{data harvesting} problem in particular (and its variant, the
\textquotedblleft minimum latency\textquotedblright\ problem \cite{Blum1994})
arises in many settings where wireless sensor networks (WSNs) are deployed for
purposes of monitoring the environment, road traffic, infrastructure for
transportation and for energy distribution, surveillance, and a variety of
other specialized purposes \cite{CGC2005}, \cite{Hart2006}. Although many efforts focus on the
analysis of the vast amount of data gathered, we must first ensure the
existence of robust means to collect all data in a timely fashion when the
size of the network and the level of node interference do not allow for a
fully connected wireless system. In such cases, sensors can locally gather and
buffer data, while mobile elements (e.g., vehicles, aerial drones) retrieve
the data from each part of the network. Similarly, mobile elements may
themselves be equipped with sensors and visit specific points of interest,
called \textquotedblleft targets\textquotedblright, where a direct
communication path does not exist between them and the central sink or base
node where the data must be delivered. In a delay-tolerant system, the control
scheme commonly used is to deploy mobile agents referred to \ as
\textquotedblleft data mules\textquotedblright, \textquotedblleft message
ferries\textquotedblright\ or simply \textquotedblleft
ferries\textquotedblright\ \cite{Tekdas2008},\cite{Wei2008},\cite{Zhao05}%
,\cite{Pandya2008}. The mobile agents visit the data generation nodes and
collect data which are then delivered to the base. Moreover, since agents
generally have limited buffer sizes, visits to the base are also needed once a
buffer is full; the same is true due to limited energy that requires them to
be periodically recharged. In general, the paths followed by the mobile agents
need to be optimized (in some sense to be defined) so as to ensure timely
delivery of data through sufficiently frequent visits at each data source and
the base and within the constraints of a given environment (e.g., an urban
setting).

Interestingly, there are analogs to the data harvesting problem outside the
sensor network realm. For instance, in disaster planning, evacuation and
rescue operations, pickup/delivery and transportation systems, and UAV
surveillance operations, the general theme involves a network of cooperating
mobile agents that need to frequently visit points of interest and transfer
data/goods/people to a base. Base visits may also be needed to recharge/renew
power supply/fuel. For example, the flying time span of a drone on a single
battery charge is limited, so that flight trajectories need to be optimized
and returns to base may be scheduled for recharge or loading/unloading. Thus,
we may view data harvesting in the broader context of a multi-agent system
where mobile agents must cooperatively design trajectories to visit a set of
targets and a base so as to optimize one or more performance ctiteria.

Having its root in wireless sensor networks, the data harvesting problem is
normally studied on a directed or undirected graph where minimum length tours
or sub-tours are to be found. This graph topology view of the problem utilizes
a multitude of routing and scheduling algorithms developed for wireless sensor
networks (e.g., \cite{Akkaya2005},\cite{Liu2011},\cite{Chang2014} and references therein). One
of its main advantages is the ability to accommodate environment constraints
(e.g., obstacles) by properly selecting graph edges; thus, movements inside a
building or within a road network are examples where a graph topology is
suitable. On the other hand, these methods also have several drawbacks: they
are generally combinatorially complex; they do not account for limitations in
motion dynamics which should not, for instance, allow an agent to form a
trajectory consisting of a sequence of straight lines; they become
computationally infeasible as on-line methods in the presence of stochastic
effects such as random target rewards or failing agents, since the graph
topology has to be re-evaluated as new information becomes available.

As an alternative to the graph-based approach, we view data harvesting as a
trajectory optimization problem in a two-dimensional space, where the mobile
agents are freely (or with some specified constraints) moving and
\textquotedblleft visit\textquotedblright\ targets whenever they reach their
vicinity. For example, each target can be assumed to have a finite range
within which a mobile agent can initiate wireless communication with it and
exchange data. These trajectories do not necessarily consist of straight
lines, i.e., edges in an underlying graph topology; therefore, an advantage of
this approach is that an agent trajectory can be designed to conform to
physical limitations in the agent's mobility. In addition, trajectories can be
adjusted on line when target locations are uncertain. Constraining such
trajectories to obstacles is also still possible. Such continuous topologies
have been used in \cite{Ny2008}, where the problem is viewed as a polling
system with a mobile server visiting data queues at fixed targets and
trajectories are designed to stabilize the system, keeping queue contents
(modeled as fluid queues) uniformly bounded; and in \cite{Lin2015} where
parameterized trajectories are optimized to solve a multi-agent persistent
monitoring problem.

A key benefit of a trajectory optimization view of the data harvesting problem
is the ability to parameterize the trajectories with different types of
functional representations and then optimize them over the given parameter
space. This reduces a dynamic optimization problem into a much simpler
parametric optimization one. If the parametric trajectory family is broad
enough, we can recover the true optimal trajectories; otherwise, we can
approximate them within a desired accuracy. Moreover, adopting a parametric
family of trajectories has several additional benefits. First, it allows
trajectories to be periodic, often a desirable property in practice. Second,
it allows one to restrict solutions to trajectories with desired features that
the true optimal cannot have, e.g., smoothness properties required for
physically realizable agent motion.

In this paper, we cast data harvesting as an optimal control problem. Defining
an appropriate optimization criterion is nontrivial in this problem (as we
will explain) and introducing appropriate performance metrics is the first
contribution of this work. Obtaining optimal agent trajectories ultimately
requires the solution of a two point boundary value problem \ (TPBVP).
Although a complete solution of such a TPBVP is computationally infeasible in
general, we identify structural properties of the optimal control policy which
allow us to reduce the agent-target/base interaction process to a hybrid
system with a well-defined set of events that cause discrete state
transitions. The second contribution is to formulate and solve an optimal
parametric agent trajectory problem. In particular, similar to the idea
introduced in \cite{Lin2015}, we represent an agent trajectory in terms of
general function families characterized by a set of parameters that we seek to
optimize, given an objective function. We consider elliptical trajectories as
well as the much richer set of Fourier series trajectory representations. We
then show that we can make use of Infinitesimal Perturbation Analysis (IPA)
for hybrid systems \cite{Cassandras2010} to determine gradients of the
objective function with respect to these parameters and subsequently obtain
(at least locally) optimal trajectories. This approach also allows us to
exploit $(i)$ robustness properties of IPA to allow stochastic data generation
processes, $(ii)$ the \emph{event-driven} nature of the IPA gradient
estimation process which is \emph{scalable} in the event set of the underlying
hybrid dynamic system, and $(iii)$ the \emph{on-line} computation which
implies that trajectories adjust as operating conditions change (e.g., new
targets); in contrast, the solution of a TPBVP is computationally challenging
even for strictly \emph{off line} methods. Finally, we provide comparisons of
our approach to algorithms based on a graph topology of the mission space.
These comparisons show that while the latter generate a spatial partitioning
of the target set among agents, our approach results in a temporal
partitioning which adds robustness with respect to agent failures or other
environmental changes. The graph-based approaches are mostly offline and normally assume the agents would be able to travel straight lines and meet targets in exact locations. On the other hand the trajectory optimization approach, allows us to accommodate limitations in agent mobility and to adjust trajectories on line.

In Section \ref{Formulation} we formulate the data harvesting problem using a
queueing model and present the underlying hybrid system. In Section
\ref{OptimalControl} we provide a Hamiltonian analysis leading to a TPBVP. In
Section \ref{OptimalControl} we formulate the alternative problem of
determining optimal trajectories based on general function representations and
provide solutions through a gradient-based algorithm using IPA for two
particular function families. Section \ref{Numerical} presents numerical
results and comparisons with state of the art data harvesting algorithms and
Section \ref{Conclusion} contains conclusions.

\section{Problem Formulation}

\label{Formulation} We consider a data harvesting problem where $N$ mobile
agents collect data from $M$ stationary targets in a two-dimensional mission
space $S$. Each agent may visit one or more of the $M$ targets, collect data
from them, and deliver them to a base. It then continues visiting targets,
possibly the same as before or new ones, and repeats this process. The objective of the agent team is to minimize data collection and delivery delays over all targets within a fixed time interval $T$. This minimization problem is formalized in the sequel.

The data harvesting problem described above can be viewed as a polling system
where mobile agents are serving the targets by collecting data and delivering
it to the base. As seen in Fig. \ref{queueschematic}, there are three sets of
queues. The first set includes the data contents $X_{i}(t)\in\mathbb{R}^{+}$
at each target $i=1,...,M$ where we use $\sigma_{i}(t)$ as the instantaneous
inflow rate. In general, we treat $\{\sigma_{i}(t)\}$ as a random process
assumed only to be piecewise continuous; we will treat it as a deterministic
constant only for the Hamiltonian analysis in the next section. Thus, at time
$t$, $X_{i}(t)$ is a random variable resulting from the random process
$\{\sigma_{i}(t)\}$.

The second set of queues consists of data contents $Z_{ij}(t)\in\mathbb{R}%
^{+}$ onboard agent $j$ collected from targets $i=1,...,M$. The last set
consists of queues $Y_{i}(t)\in\mathbb{R}^{+}$ containing data at the base,
one queue for each target, delivered by some agent $j$. Note that
$\{Z_{ij}(t)\}$ and $\{Y_{i}(t)\}$ are also random processes.

In Fig. \ref{queueschematic} collection and delivery switches are shown by
$p_{ij}$ and $p_{\!_{Bj}}$ (formally defined in the sequel). These switches
are \textquotedblleft on" when agent $j$ is connected to target $i$ or the
base respectively. All queues are modeled as flow systems whose dynamics are
given next (however, as we will see, the agent trajectory optimization is
driven by events observed in the underlying system where queues contain
discrete data packets so that this modeling device has minimal effect on our
analysis). \begin{figure}[ptb]
\begin{center}
\begin{tikzpicture}[scale=0.50]
\draw[thin,gray](0,0)--(10,0);
\draw[very thick,blue] (0.5,1)--(0.5,0)--(1,0)--(1,1);
\draw [fill,blue, nearly transparent]  (0.5,0.8)--(0.5,0)--(1,0)--(1,0.8);
\draw [black,thick,->] (0.75,2.0)--(0.75,1.0);
\draw (0.1,1.2) node {$X_1$};
\draw (4.75,0.6) node {$\dots$};
\draw (4.75,1.0) node {$X_i$};
\draw[very thick,blue] (8.5,1)--(8.5,0)--(9,0)--(9,1);
\draw [fill,blue, nearly transparent]  (8.5,0.4)--(8.5,0)--(9,0)--(9,0.4);
\draw [black,thick,->] (8.75,2.0)--(8.75,1.0);
\draw (9.5,1.2) node {$X_M$};
\draw[blue] (-0.1,-0.5) node {$p_{11}$};
\draw[blue] (10.0,-0.5) node {$p_{MN}$};
\draw[very thick,red] (0.5,-0.5)--(1,-0.2);
\draw[very thick,red,fill] (1,-0.2) circle(0.05);
\draw[very thick,red] (4.7,-0.3) node {$\ldots$};
\draw[very thick,red] (8.5,-0.5)--(9,-0.2);
\draw[very thick,red,fill] (9,-0.2) circle(0.05);
\draw[thin,gray](0,-1.1)--(10,-1.1);
\draw[thin,gray](0,-0.5)--(10,-0.5);
\draw[thin,gray](4.5,-1.1)--(4.5,-2.5);
\draw[very thick,green] (5.5,-1.5)--(4.5,-1.5)--(4.5,-2)--(5.5,-2);
\draw [fill,blue, nearly transparent]  (5.2,-1.5)--(4.5,-1.5)--(4.5,-2)--(5.2,-2);
\draw [black,thick,->] (6.5,-1.2)--(6.5,-1.75)--(5.5,-1.75);
\draw (7,-1.75) node {$Z_{ij}$};
\draw[thin,gray](0,-2.5)--(10,-2.5);
\draw[blue] (-0.1,-2.7) node {$p_{B1}$};
\draw[blue] (10.0,-2.7) node {$p_{BN}$};
\draw[very thick,red] (0.5,-3)--(1,-2.7);
\draw[very thick,red,fill] (1,-2.7) circle(0.05);
\draw[very thick,red] (4.7,-2.8) node {$\ldots$};
\draw[very thick,red] (8.5,-3.0)--(9,-2.7);
\draw[very thick,red,fill] (9,-2.7) circle(0.05);
\draw[thin,gray](0,-3)--(10,-3);
\draw[very thick,red] (0.5,-4.5)--(0.5,-3.5)--(1,-3.5)--(1,-4.5);
\draw [fill,blue, nearly transparent]  (0.5,-4.2)--(0.5,-3.5)--(1,-3.5)--(1,-4.2);
\draw [black,thick,->] (1.5,-3.8)--(1.5,-5)--(0.7,-5)--(0.7,-4.5);
\draw (0.25,-4.8) node {$Y_{1}$};
\draw (4.75,-4.2) node {$\ldots$};
\draw[very thick,red] (8.5,-4.5)--(8.5,-3.5)--(9,-3.5)--(9,-4.5);
\draw [fill,blue, nearly transparent]  (8.5,-4.2)--(8.5,-3.5)--(9,-3.5)--(9,-4.2);
\draw [black,thick,->] (8,-3.8)--(8,-5)--(8.8,-5)--(8.8,-4.5);
\draw (9.5,-4.8) node {$Y_{M}$};
\draw[thin,gray](0,-3.5)--(10,-3.5);
\draw (4.75,-4.75) node {$Y_{i}$};
\end{tikzpicture}
\end{center}
\caption{Data harvesting queueing model for $M$ targets and $N$ agents}%
\label{queueschematic}%
\end{figure}

Let $s_{j}(t)=[s_{j}^{x}(t),s_{j}^{y}(t)]\in S$ be the position of agent $j$
at time $t$, Then, the state of the system can be defined as
\begin{align}
\label{state}
\mathbf{X}  &  (t)=[X_{1}(t),\dots,X_{M}(t),Y_{1}(t),\dots,Y_{M}(t),\\\nonumber
&  Z_{11}(t),\dots,Z_{MN}(t),s_{1}^{x}(t),s_{1}^{y}(t),\dots,s_{N}%
^{x}(t),s_{N}^{y}(t)] %
\end{align}
The position of the agent follows single integrator dynamics at all times:
\begin{equation}
\dot{s}_{j}^{x}(t)=u_{j}(t)\cos\theta_{j}(t),\quad\ \ \dot{s}_{j}^{y}%
(t)=u_{j}(t)\sin\theta_{j}(t) \label{agentdynamics}%
\end{equation}%
\[
s_{j}^{x}(0)=X_{B}\qquad s_{j}^{y}(0)=Y_{B},\quad\forall j
\]
where $u_{j}(t)$ is the scalar speed of the agent (normalized so that $0\leq
u_{j}(t)\leq1$), $0\leq\theta_{j}(t)<2\pi$ is the angle relative to the
positive direction and $[X_{B},Y_{B}]$ is the location of the base. Thus, we
assume that the agent controls its orientation and speed. The agent
states $\{s_{j}(t)\}$, $j=1,\dots,N$, are also random processes since the
controls are generally dependent on the random queue states. Thus, we ensure
that all random processes are defined on a common probability space.

An agent is represented as a particle, so that we will omit the need for any
collision avoidance control. The agent dynamics above could be more
complicated without affecting the essence of our analysis, but we will limit
ourselves here to (\ref{agentdynamics}).

We consider a set of data sources as points $w_{i}\in S,$ $i=1,\dots,M,$ with
associated ranges $r_{ij}$, so that agent $j$ can collect data from $w_{i}$
only if the Euclidean distance $d_{ij}(t)=\Vert w_{i}-s_{j}(t)\Vert$ satisfies
$d_{ij}(t)\leq r_{ij}$. Similarly, the base is at $w_{\!_{B}}=[X_{B},Y_{B}]\in
S$ which receives all data collected by the agents and an agent can only
deliver data to the base if the Euclidean distance $d_{\!_{Bj}}(t)=\Vert
w_{\!_{Bj}}-s_{j}(t)\Vert$ satisfies $d_{\!_{Bj}}(t)\leq r_{Bj}$. Using
$p:S\times S\rightarrow\lbrack0,1]$, we define a function $p_{ij}(t)$
representing the collection switches in Fig. \ref{queueschematic} as:
\begin{equation}
p_{ij}(t)=p(w_{i},s_{j}(t)) \label{Pij}%
\end{equation}
$p_{ij}(t)$ is viewed as the normalized data collection rate from target $i$
when the agent is at $s_{j}(t)$ and we assume that: $(\mathbf{A1})$ it is
monotonically non-increasing in the value of $d_{ij}(t)=\Vert w_{i}%
-s_{j}(t)\Vert$, and $(\mathbf{A2})$ it satisfies $p_{ij}(t)=0$ if
$d_{ij}(t)>r_{ij}$. Thus, $p_{ij}(t)$ can model communication power
constraints which depend on the distance between a data source and an agent
equipped with a receiver (similar to the model used in \cite{Ny2008}) or
sensing range constraints if an agent collects data using on-board sensors.
For simplicity, we will also assume that: $(\mathbf{A3})$ $p_{ij}(t)$ is
continuous in $d_{ij}(t)$. Similarly, we define:
\begin{equation}
p_{\!_{Bj}}(t)=p(w_{\!_{B}},s_{j}(t)) \label{PB}%
\end{equation}
The maximum rate of data collection from target $i$ by agent $j$ is $\mu_{ij}%
$, so that the instantaneous rate is $\mu_{ij}p_{ij}(t)$ if $j$ is connected
to $i$. We will assume that: $(\mathbf{A4})$ only one agent at a time is
connected to a target $i$ even if there are other agents $l$ with
$p_{il}(t)>0$; this is not the only possible model, but we adopt it based on
the premise that simultaneous downloading of packets from a common source
creates problems of proper data reconstruction at the base.

We can now define the dynamics of the queue-related components of the state
vector in \eqref{state}. The dynamics of $X_{i}(t)$, assuming that agent $j$
is connected to it, are
\begin{equation}
\resizebox{0.9 \columnwidth}{!}{$
\dot{X}_{i}(t)=\left\{
\begin{array}
[c]{ll}%
0\qquad\quad\mbox{if }X_{i}(t)=0\mbox{ and }\sigma_{i}(t)\leq\mu_{ij}%
p_{ij}(t) & \\
\sigma_{i}(t)-\mu_{ij}p_{ij}(t)\quad\qquad\qquad\qquad\mbox{otherwise} &
\end{array}
\right.  \label{Xdot}
$}
\end{equation}
Obviously, $\dot{X}_{i}(t)=\sigma_{i}(t)$ if $p_{ij}(t)=0$, $j=1,\dots,N$.

In order to express the dynamics of $Z_{ij}(t)$, let
\begin{equation}
\resizebox{0.99 \columnwidth}{!}{$
\tilde{\mu}_{ij}(t)=\left\{
\begin{array}
[c]{ll}%
\min\Big(\frac{\sigma_{i}(t)}{p_{ij}(t)},\mu_{ij}\Big)\quad & \mbox{if }X_{i}%
(t)=0\text{ and }p_{ij}(t)>0\\
\mu_{ij}\quad & \mbox{otherwise}
\end{array}
\right.  \label{mutilde}
$}
\end{equation}
This gives us the dynamics:
\begin{equation}
\resizebox{0.91 \columnwidth}{!}{$
\dot{Z}_{ij}(t)=\left\{
\begin{array}
[c]{ll}%
0\qquad\mbox{if }Z_{ij}(t)=0\mbox{ and }\tilde{\mu}_{ij}(t)p_{ij}%
(t)-\beta_{ij}p_{\!_{Bj}}(t)\leq0 & \\
\tilde{\mu}_{ij}(t)p_{ij}(t)-\beta_{ij}p_{\!_{Bj}}(t)\qquad\qquad\qquad
\qquad\mbox{otherwise} &
\end{array}
\right.  \label{Zdot}
$}
\end{equation}
where $\beta_{ij}$ is the maximum rate of data from target $i$ delivered to
$B$ by agent $j$. For simplicity, we assume that: $(\mathbf{A5})$ $\Vert
w_{i}-w_{B}\Vert>r_{ij}+r_{Bj}$ for all $i=1,\dots,M$ and $j=1,\dots,N$, i.e.,
the agent cannot collect and deliver data at the same time. Therefore, in
(\ref{Zdot}) it is always the case that for all $i$ and $j$, $p_{ij}%
(t)p_{Bj}(t)=0$. Finally, the dynamics of $Y_{i}(t)$ depend on $Z_{ij}(t)$, the content of the
on-board queue of each agent $j$ from target $i$ as long as $p_{\!_{Bj}}(t)>0$. We
define $\beta_{i}(t)=\sum_{j=1}^{N}\beta_{ij}p_{\!_{Bj}}(t)\mathbf{1}[Z_{ij}(t)>0]$
as the total instantaneous delivery rate for target $i$ data, so that the
dynamics of $Y_{i}(t)$ are:
\begin{equation}
\dot{Y}_{i}(t)=\beta_{i}(t) \label{Ydot}%
\end{equation}

\noindent\textbf{Hybrid System model:} Taking into account the state vector in
\eqref{state} and the dynamics in \eqref{agentdynamics}, \eqref{Xdot},
\eqref{Zdot} and \eqref{Ydot}, the data harvesting process is a stochastic
hybrid system. Discrete modes of the system are defined by intervals over
which $(i)$ agents are visiting a target, $(ii)$ agents are visiting the base,
and $(iii)$ agents are moving when not connected to any target or base. The
events that trigger mode transitions are defined in Table \ref{eventlist} (the
superscript $0$ denotes events causing a variable to reach a value of zero
from above and the superscript $+$ denotes events causing a variable to become
strictly positive from a zero value). We also use the following definitions:
\begin{equation}
\resizebox{0.9 \columnwidth}{!}{$
d_{ij}^{+}(t)=\max(0,d_{ij}(t)-r_{ij}),\text{\ }d_{\!_{Bj}}^{+}(t)=\max
(0,d_{\!_{Bj}}(t)-r_{\!_{Bj}})\label{Dplus}%
$}
\end{equation}
The variables above are zero if agent $j$ is within range of target $i$ or the base
respectively.

\begin{table}[tbh]
\caption{Hybrid System Events}%
\label{eventlist}
\renewcommand{\arraystretch}{1.2} \centering
\begin{tabular}
[c]{|c|l|}\hline
Event Name & Description\\\hline
1. $\xi_{i}^{0}$ & $X_{i}(t)$ hits 0, for $i=1,\dots,M$\\\hline
2. $\xi_{i}^{+}$ & $X_{i}(t)$ leaves 0, for $i=1,\ldots,M$.\\\hline
3. $\zeta_{ij}^{0}$ & $Z_{ij}(t)$ hits 0, for $i=1,\ldots,M$, $j=1,\ldots
,N$\\\hline
4. $\delta_{ij}^{+}$ & $d_{ij}^{+}(t)$ leaves 0, for $i=1,\ldots,M$,
$j=1,\ldots,N$\\\hline
5. $\delta_{ij}^{0}$ & $d_{ij}^{+}(t)$ hits 0, for $i=1,\ldots,M$,
$j=1,\ldots,N$\\\hline
6. $\Delta_{j}^{+}$ & $d_{\!_{Bj}}^{+}(t)$ leaves 0, for $j=1,\ldots
,N$\\\hline
7. $\Delta_{j}^{0}$ & $d_{\!_{Bj}}^{+}(t)$ hits 0, for $j=1,\ldots,N$\\\hline
\end{tabular}
\end{table}\begin{figure}[ptb]
\centering
\includegraphics[width=3in]{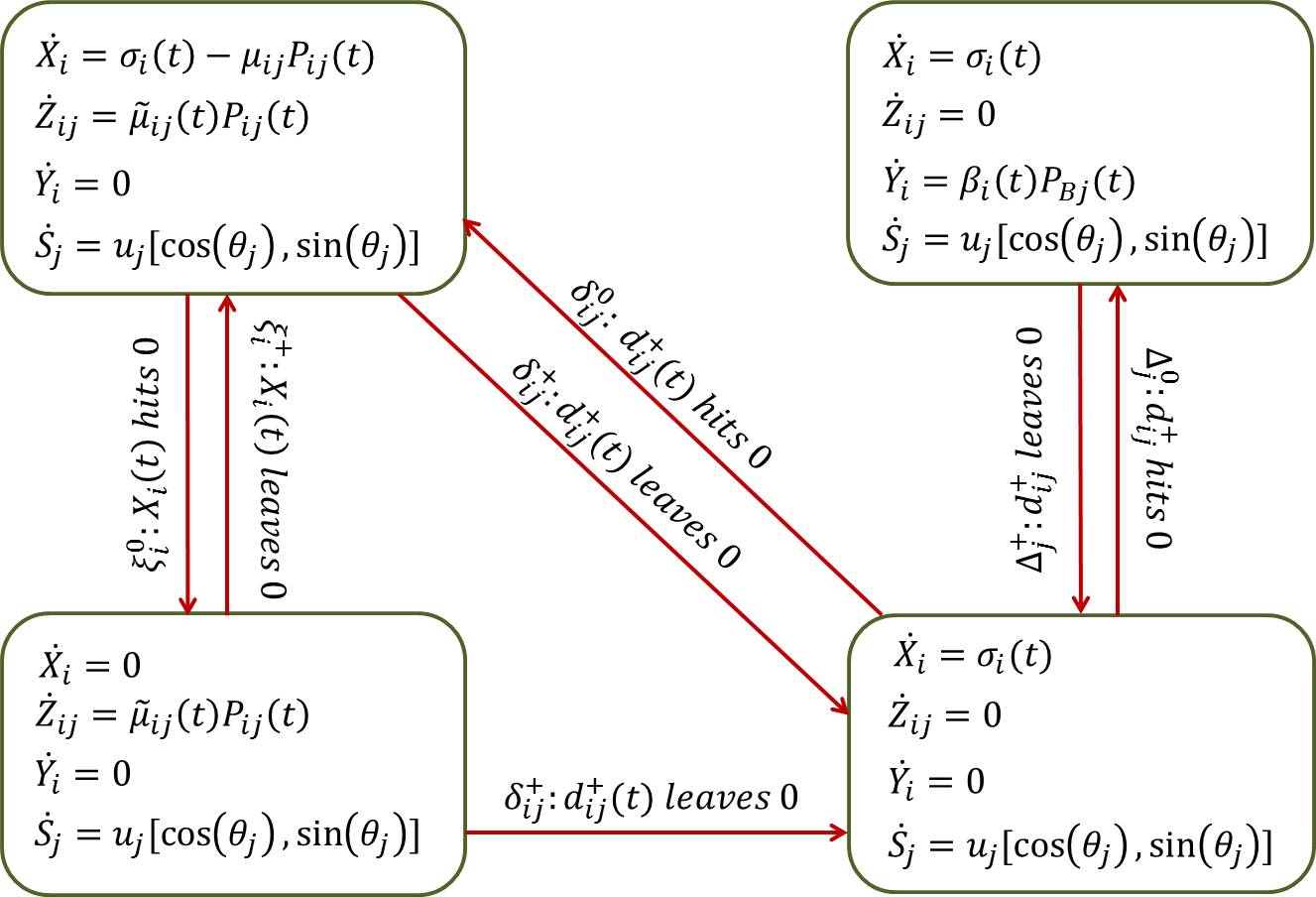}\newline\caption{One target $i$ and one
agent $j$ hybrid automaton}%
\label{Automaton}%
\end{figure}Observe that each of the events in Table \ref{eventlist} causes a
change in at least one of the state variables in \eqref{Xdot}, \eqref{Zdot},
\eqref{Ydot}. For example, $\xi_{i}^{0}$ (i.e., the queue at target $i$ is
emptied) causes a switch in (\ref{Xdot}) from $\dot{X}_{i}(t)=\sigma
_{i}(t)-\mu_{ij}p_{ij}(t)$ to $\dot{X}_{i}(t)=0$. Also note that we have
omitted an event $\zeta_{ij}^{+}$ for $Z_{ij}(t)$ becoming strictly positive
since this event is immediately induced by $\delta_{ij}^{0}$ when agent $j$
comes within range of target $i$ and starts collecting data causing
$Z_{ij}(t)>0$ if $Z_{ij}(t)=0$ and $X_{i}(t)>0$. Finally, note that all events
are directly observable during the execution of any agent trajectory and they
do not depend on our flow-based queueing model. For example, if $X_{i}(t)$
becomes zero, this defines event $\xi_{i}^{0}$ regardless of whether the
corresponding queue is based on a flow or on discrete data packets; this
observation is very useful in the sequel. A high-level hybrid automaton is
presented in Fig. \ref{Automaton} for a single target
$i$ and one agent $j$ system. This automaton becomes much more complicated once more
targets and agents are included.

\subsection{Performance Measures\label{performance}}

Our objective is to maintain minimal data content at all target queues while
also maximizing the contents of the delivered data at the base queues. Thus,
we define $J_{1}(t)$ to be the weighted sum of expected target queue content
(recalling that $\{\sigma_{i}(t)\}$ are random processes):
\begin{equation}
J_{1}(t)=E\big[\sum\limits_{i=1}^{M}\alpha_{i}X_{i}(t)\big] \label{J1}%
\end{equation}
where the weight $\alpha_{i}$ represents the relative importance factor of
target $i$. Similarly, we define a weighted sum of expected base queue
content:
\begin{equation}
J_{2}(t)=E\big[\sum\limits_{i=1}^{M}\alpha_{i}Y_{i}(t)\big] \label{J2}%
\end{equation}
Therefore, a tentative optimization objective is the convex combination of
(\ref{J1}) and (\ref{J2}) leading to the minimization problem:
\begin{equation}
\min\limits_{\mathbf{u(t),\boldsymbol{\theta}(t)}}J(T)=\frac{1}{T}\int_{0}%
^{T}\Big(qJ_{1}(t)-(1-q)J_{2}(t)\Big)dt \label{OptimJ1J2}%
\end{equation}
where $\mathbf{u}$ and $\boldsymbol{\theta}$ are the vectors formed by the agent speed and headings and $q\in[0,1]$ is a weight capturing the relative importance of
collected data as opposed to delivered data.

This performance measure captures the collection and delivery of data which
are processes taking place while an agent is connected to any of the targets
or the base. However, it lacks any information regarding the interaction of an
agent with the environment when this agent is not connected to any target or
base and is due to the fact that the environment has only a finite number of
points of interest (targets). This motivates two new performance measures we
introduce next.

\noindent\textbf{Agent Utilization:} In accessing the targets, we must ensure
that the agents maximize their utilization, i.e., the fraction of time spent
performing a useful task by being within range of a target or the base.
Equivalently, we aim to minimize the non-productive idling time of each agent
during which it is not visiting any target or the base. Using (\ref{Dplus}),
agent $j$ is idling when $d_{ij}^{+}(t)>0$ for all $i$ and $d_{\!_{Bj}}%
^{+}(t)>0$. We define the idling function $I_{j}(t)$ as follows:
\begin{equation}
I_{j}(t)=\log\Bigg(1+d_{\!_{Bj}}^{+}(t)\prod_{i=1}^{M}d_{ij}^{+}%
(t)\Bigg) \label{Ij}%
\end{equation}
This function has the following properties. First, $I_{j}(t)=0$ if and only if
the product term inside the bracket is zero, i.e., agent $j$ is visiting a
target or the base; otherwise, $I_{j}(t)>0$. Second, $I_{j}(t)$ is
monotonically nondecreasing in the number of targets $M$. The logarithmic
function is selected to prevent the value of $I_{j}(t)$ from dominating
those of $J_{1}(\cdot)$ and $J_{2}(\cdot)$ when included in a single objective
function. Thus, we define:
\begin{equation}
J_{3}(t)=E\big[\sum\limits_{j=1}^{N}I_{j}(t)\big] \label{J3}%
\end{equation}
Note that $I_{j}(t)$ is also a random variable since it is a function of the
agent states $s_{j}(t)$, $j=1,\dots,N$.

\noindent\textbf{Event Excitation:} As mentioned in the Introduction, our goal
is to develop an event-driven approach for on-line trajectory optimization. In
other words, we seek a controller whose actions are based on events observed
during the operation of the hybrid system described earlier. Clearly, the
premise of this approach is that the events involved are observable so as to
\textquotedblleft excite\textquotedblright\ the underlying event-driven
controller. However, it is not always obvious that these events actually take
place under every feasible control, in which case the controller may be
useless. This is illustrated in Fig. \ref{twotrajectories} where two different
trajectories are shown for the agent. The blue and red trajectories pass
through none of the targets. Consequently, there is an infinite number of
trajectories for which the value of the objective function in
\eqref{OptimJ1J2} is given by
\begin{equation}
J(T)=\frac{q}{T}\int_{0}^{T}t\sigma_{i}(t)dt\
\end{equation}
which is simply the total amount of data generated at all targets through
(\ref{Xdot}). This cannot be affected by any event-driven control action,
since none of the events in Table \ref{eventlist} is excited. Clearly, the
same is true for $J_{3}(t)$ in (\ref{J3}). \begin{figure}[ptb]
\centering
\includegraphics[width=2.2in]{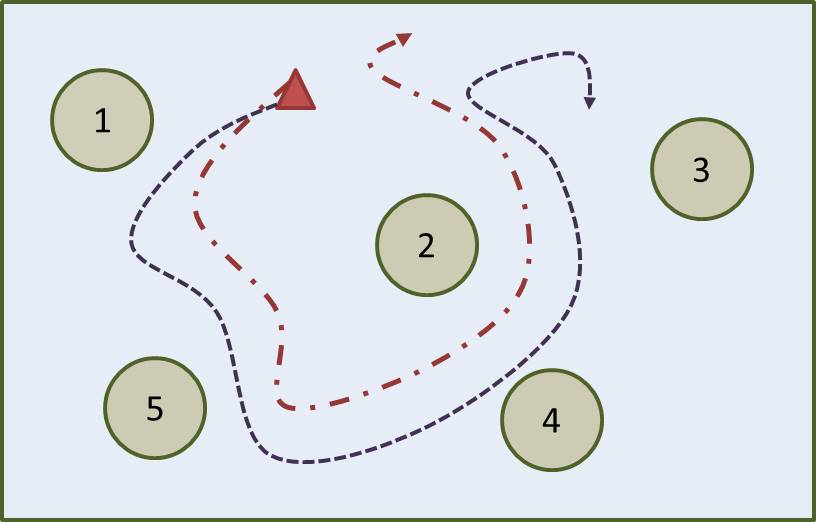}\caption{Two trajectories with
same objective function value}%
\label{twotrajectories}%
\end{figure}

To address this issue, our goal is to \textquotedblleft
spread\textquotedblright\ each target cost (accumulated data) $X_{i}(t)$ over
all $w\in S$. This will create a potential field throughout the mission space.
Following \cite{Khazaeni2016}, we begin by determining the convex hull
produced by the targets, since the trajectories need not go outside this
polygon. Let $\mathcal{T}=\{w_{1},w_{2},\cdots,w_{M}\}$ be the set of all
target points. Then, their convex hull is
\begin{equation}
\mathcal{C}=\bigg\{\sum_{i=1}^{M}\beta_{i}w_{i}|\sum_{i}\beta_{i}=1,\forall
i,~\beta_{i}\geq0\bigg\}
\end{equation}
Given that $\mathcal{C}\subset S$, we seek a function $R(w,t)$ that satisfies
the following property for some constants $c_{i}>0$:
\begin{equation}
\int_{\mathcal{C}}R(w,t)dw=\sum_{i=1}^{M}c_{i}X_{i}(t)\label{Rproperty}%
\end{equation}
Thus, $R(w,t)$ can be viewed as a time-varying density function defined for
all points $w\in\mathcal{C}$ which generates a total cost equivalent to a
weighted sum of the target data $X_{i}(t)$, $i=1,\ldots,M$. Letting $d_{i}%
^{+}(w)=\max(\Vert w-w_{i}\Vert,r_{i})$, where $r_{i}=\min_{j}{r_{ij}}$, we
then define:
\begin{equation}
R(w,t)=\sum_{i=1}^{M}\frac{\alpha_{i}X_{i}(t)}{d_{i}^{+}(w)}\label{J2R}%
\end{equation}
Intuitively, a target's cost (numerator above) is spread over all $w\in S$ so
as to obtain the \textquotedblleft total weighted cost
density\textquotedblright\ at $w$. Note that $d_{i}^{+}(w)$ is defined to
ensure that the target cost remains positive and fixed for all points $w\in
C(w_{i})$. In order to illustrate this construction, Fig. \ref{Qa} shows a
sample mission space with 9 target locations and Fig \ref{Qb} shows the value
of $R(w,t)$ at a specific time $t$.

\begin{proposition}
There exist $c_{i}>0$, $i=1,\ldots,M$, such that:
\begin{equation}
\int_{\mathcal{C}}R(w,t)dw=\sum_{i=1}^{M}c_{i}x_{i}(t) \label{corequation}%
\end{equation}
\label{Rproof}
\end{proposition}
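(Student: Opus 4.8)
The plan is to exploit the fact that the only dependence on $t$ in $R(w,t)$ resides in the factors $X_{i}(t)$, which are constants with respect to the spatial variable $w$ of integration. First I would substitute the definition \eqref{J2R} into the left-hand side of \eqref{corequation} and interchange the finite sum with the integral over $\mathcal{C}$; this is immediate by linearity, and if one prefers it is also justified by Tonelli's theorem since each term $\alpha_{i}X_{i}(t)/d_{i}^{+}(w)$ is nonnegative. This gives
\[
\int_{\mathcal{C}}R(w,t)\,dw=\sum_{i=1}^{M}\alpha_{i}X_{i}(t)\int_{\mathcal{C}}\frac{dw}{d_{i}^{+}(w)},
\]
so the natural candidate is $c_{i}=\alpha_{i}\int_{\mathcal{C}}dw/d_{i}^{+}(w)$, and it remains only to show that each $c_{i}$ is a finite, strictly positive constant not depending on $t$.

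For the bounds, note that $d_{i}^{+}(w)=\max(\Vert w-w_{i}\Vert,r_{i})\geq r_{i}>0$ for every $w$, so the integrand satisfies $0<1/d_{i}^{+}(w)\leq 1/r_{i}$ on all of $\mathcal{C}$. The convex hull $\mathcal{C}$ of the target points is a compact convex subset of $\mathbb{R}^{2}$ with positive, finite Lebesgue measure $|\mathcal{C}|$ (it has nonempty interior, the targets not all being collinear). Hence $0<c_{i}\leq\alpha_{i}|\mathcal{C}|/r_{i}<\infty$, using $\alpha_{i}>0$, and the integral defining $c_{i}$ plainly does not involve $t$. Substituting back into the display yields $\int_{\mathcal{C}}R(w,t)\,dw=\sum_{i=1}^{M}c_{i}X_{i}(t)$, which is exactly \eqref{corequation}.

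I do not expect a genuine obstacle here: the argument reduces to linearity of integration together with an elementary bound on a bounded, strictly positive integrand over a set of positive measure. The only point that deserves a word of care is the non-degeneracy of $\mathcal{C}$ — if the targets happened to be collinear, $\mathcal{C}$ would have zero area and one would instead integrate $R(w,t)$ over a slightly enlarged region (an $\epsilon$-neighborhood of $\mathcal{C}$, or $S$ itself), which has positive Lebesgue measure and leaves all remaining steps unchanged.
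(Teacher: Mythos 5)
Your proof is correct, but it takes a genuinely more abstract route than the paper. You observe that $c_{i}=\alpha_{i}\int_{\mathcal{C}}dw/d_{i}^{+}(w)$ is the only possible choice and then establish $0<c_{i}<\infty$ by the elementary bounds $0<1/d_{i}^{+}(w)\leq1/r_{i}$ on a compact set $\mathcal{C}$ of positive measure; this suffices for the existence claim and is shorter and cleaner. The paper instead evaluates $\int_{\mathcal{C}}dw/d_{i}^{+}(w)$ explicitly in polar coordinates centered at $w_{i}$, splitting into three cases according to whether the disk $C(w_{i})$ of radius $r_{i}$ lies entirely inside $\mathcal{C}$, the target sits on an edge of $\mathcal{C}$, or the disk only partially intersects $\mathcal{C}$; this yields closed-form expressions such as $c_{i}=\alpha_{i}\big[2\pi+\int_{0}^{2\pi}\log(\Lambda(\theta)/r_{i})d\theta\big]$, where $\Lambda(\theta)$ is the distance from $w_{i}$ to the boundary of $\mathcal{C}$ in direction $\theta$. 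What the explicit computation buys is a usable formula for the constants (relevant if one wanted to normalize or compare the weights across targets); what your argument buys is brevity and fewer case distinctions. Two small remarks: the proposition fixes a realization $X_{i}(t)=x_{i}(t)$ before integrating, which you should state to match the statement's right-hand side; and your caveat about collinear targets (where $\mathcal{C}$ has zero area and $c_{i}=0$) is a genuine degeneracy that the paper's proof also implicitly excludes, so flagging it and proposing to enlarge the integration domain is a legitimate refinement rather than a defect.
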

\begin{proof}
See Appendix \ref{appendixproofs}.
\end{proof}

Using the same idea for the base, we define
\begin{equation}
R_{\!_{Bj}}(w,t)=\frac{\sum_{i=1}^{M}\alpha_{i}Z_{ij}}{d_{\!_{B}}^{+}(w)}%
\end{equation}
where $d_{\!_{B}}^{+}(w)=\max(\Vert w_{\!_{B}}-w\Vert,r_{\!_{B}})$ is a
constant and $r_{\!_{B}}=\min_{j}{r_{\!_{Bj}}}$.

Proposition \ref{Rproof} asserts that the total cost due to data accumulated at targets
may indeed be spread over all points in the mission space allowing an agent to
\textquotedblleft interact\textquotedblright\ with these points through the
resulting potential field. In order to capture this interaction (see also
\cite{Khazaeni2016}), we define the travel cost for an agent $j$ to reach
point $w$ as the quadratic of the distance between them $\Vert s_{j}%
(t)-w\Vert^{2}$ and the total travel cost as
\begin{equation}
P(w,\mathbf{s}(t))=\sum_{j=1}^{N}\Vert s_{j}(t)-w\Vert^{2} \label{Pfunction2}%
\end{equation}
Using these definitions we can now introduce a new performance metric:
\begin{equation}
\resizebox{0.89 \columnwidth}{!}{$
J_{4}(t)=E\big[\sum_{j=1}^{N}\int_{S}\bigg(R(w,t)+R_{\!_{Bj}}(w,t)\bigg)P_{j}%
(w,t)dw\big]$} \label{J4}%
\end{equation}
\begin{figure}[ptb]
\centering
\begin{subfigure}[Mission Space with dots as target locations]{
\includegraphics[width=1.55in,height=1.55in]{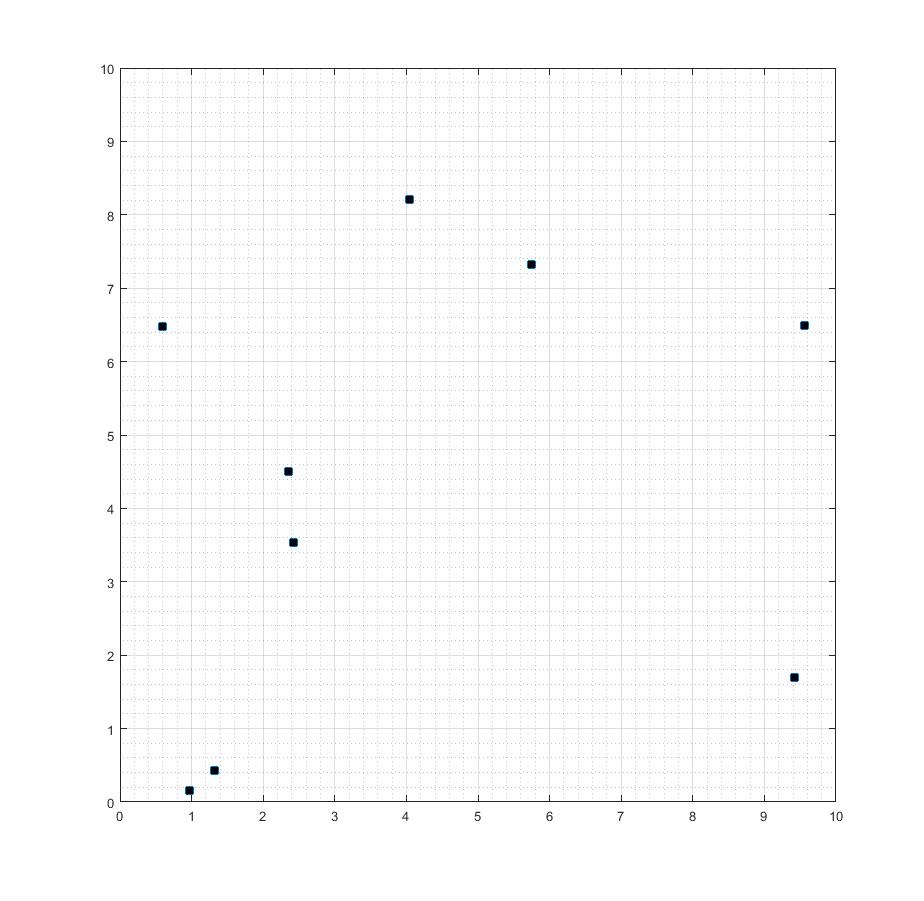}\label{Qa}}\end{subfigure}
\hspace{-2mm} \begin{subfigure}[$R$ Function at a sample time $t$]{
\includegraphics[width=1.65in,height=1.55in]{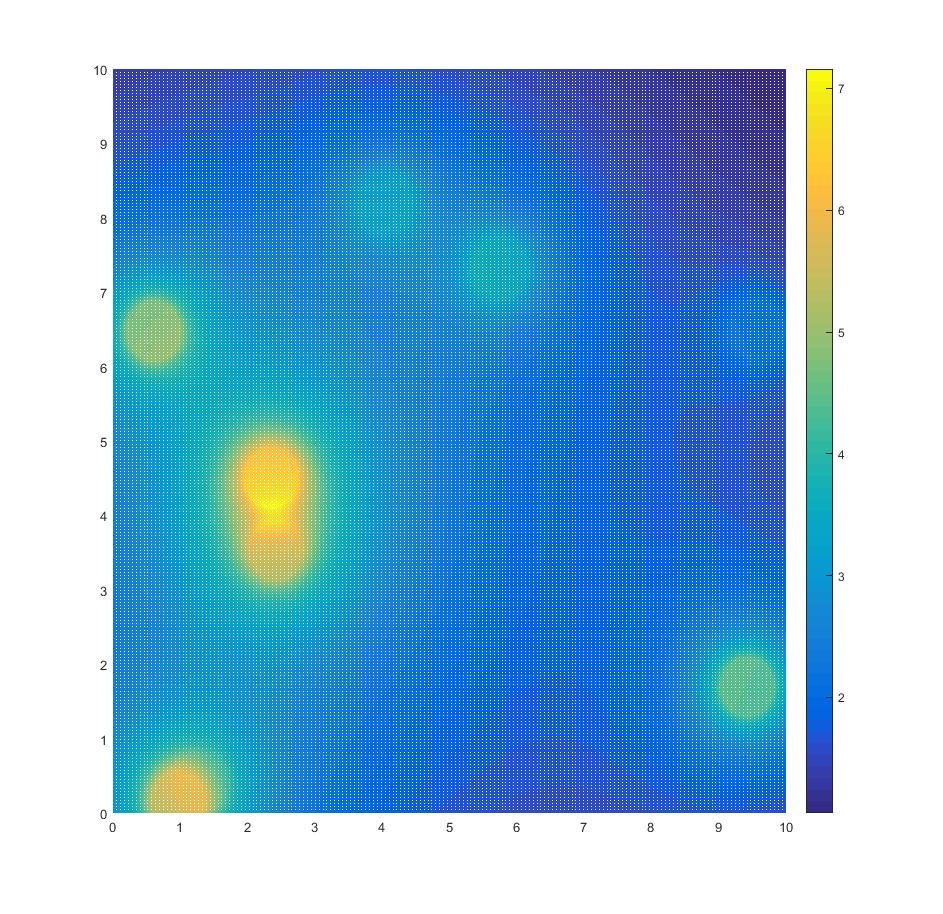}\label{Qb}}
\end{subfigure}
\caption{$R$ function illustration}%
\end{figure}
\noindent\textbf{Terminal Cost:} Since we address the data harvesting problem
over a finite interval $T$, we define a terminal cost at $T$ capturing the
expected value of the amount of data left on board the agents:
\begin{equation}
J_{f}(T)=\frac{1}{T}E\big[\sum\limits_{i=1}^{M}\sum\limits_{j=1}^{N}\alpha
_{i}Z_{ij}(T)\big] \label{Jf}%
\end{equation}
Clearly, the effect of this term vanishes as $T\rightarrow\infty$ as long as
all $E[Z_{ij}(T)]$ remains bounded. Moreover, if we constrain trajectories to
be periodic, this terminal cost may be omitted. Finally, for simplicity, we
will assume that $\alpha_{i}=1$ for all $i$.

\subsection{Optimization Problem}

We can now formulate a stochastic optimization problem $\mathbf{P1}$ where the
control variables are the agent speeds and headings denoted by the vectors
$\mathbf{u}(t)=[u_{1}(t),\dots,u_{N}(t)]$ and $\boldsymbol{\theta}%
(t)=[\theta_{1}(t),\dots,\theta_{N}(t)]$ respectively (omitting their
dependence on the full system state at $t$). Combining the components in \eqref{J3}, \eqref{J4} and \eqref{Jf} we obtain:
\begin{align}
\min\limits_{\mathbf{u(t),\boldsymbol{\theta}(t)}}J(T)= &  \frac{1}{T}\int
_{0}^{T}\Big(\frac{q}{M_{X}}J_{1}(t)-\frac{(1-q)}{M_{Y}}J_{2}(t)\\\nonumber
&  +\frac{1}{M_{I}}J_{3}(t)+\frac{1}{M_{R}}J_{4}(t)\Big)dt+\frac{1}{M_{Z}%
}J_{f}(T)\label{GenOptim}%
\end{align}
where we introduce the normalizing factors $M_{X}$, $M_{Y}$, $M_{I}$, $M_{R}$
and $M_{Z}$. This normalization ensures that all different components of
$J(T)$ are in the same range so that none of them may dominate any other. We
use an upper bound for the value of each component as follows, where we assume that $\sigma_i(0)>0$ w.p. 1:
\begin{align}
M_{X} &  =M_{Y}=M_{Z}=T\sum_{i}\sigma_{i}(0)\\
M_{I} &  =\log\Big(1+\sqrt{L_{1}^{2}+L_{2}^{2}}^{M+1}\Big)\\
M_{R} &  =\frac{TL_{1}L_{2}(L_{1}^{2}+L_{2}^{2})}{r}\sum_{i}\sigma
_{i}(0),r=\frac{\sum_{i}r_{i}}{M}
\end{align}
where $L_{1}$ and $L_{2}$ define the size of the rectangular mission space
(the normalization factors can easily be adapted to different mission space
shapes). Observe that an unattainable lower bound of the total objective
function is $-(1-q)$ which occurs if $J_{1}=J_{3}=J_{4}=0$ and $J_{2}$ is at
its maximum of 1. If $q=0$, then the lower bound is at its minimum of -1.
\section{Optimization Methodology}
\label{OptimalControl} In this section, we consider problem $\mathbf{P1}$ in a
setting where all data arrival processes are deterministic, so that all
expectations in \eqref{J1}-\eqref{Jf} degenerate to their arguments. We
proceed with a standard Hamiltonian analysis leading to a Two Point Boundary
Value Problem (TPBVP) \cite{bryson1975applied} where the states and costates
are known at $t=0$ and $t=T$ respectively. We define the costate vector
associated to \eqref{state}:%
\begin{align}
\nonumber
\boldsymbol{\lambda}  &  (t)=[\lambda_{1}(t),\dots,\lambda_{M}(t),\gamma
_{1}(t),\dots,\gamma_{M}(t),\\
&  \phi_{11}(t),\dots,\phi_{MN}(t),\eta_{1}^{x}(t),\eta_{1}^{y}(t),\dots
,\eta_{N}^{x}(t),\eta_{N}^{y}(t)]
\end{align}
The Hamiltonian is
\begin{align}
\nonumber
&  H(\mathbf{X},{\boldsymbol{\lambda}},\mathbf{u},{\boldsymbol{\theta}}%
)=\frac{1}{T}\Big[qJ_{1}(t)-(1-q)J_{2}(t)+J_{3}(t)+J_{4}(t)\Big]\\\nonumber
&  +\sum_{i}\lambda_{i}(t)\dot{X}_{i}(t)+\sum_{i}\gamma_{i}(t)\dot{Y}%
_{i}(t)+\sum_{i}\sum_{j}\phi_{ij}(t)\dot{Z}_{ij}(t)\\
&  +\sum_{j}\big(\eta_{j}^{x}(t)u_{j}(t)\cos\theta_{j}(t)+\eta_{j}^{y}%
(t)u_{j}(t)\sin\theta_{j}(t)\big)
\label{hamiltonian}%
\end{align}
where the costate equations are {\small
\[%
\resizebox{0.99 \columnwidth}{!}{$
\begin{split}
&  \dot{\lambda}_{i}(t)=-\frac{\partial H}{\partial{X_{i}}}=-\frac{1}%
{T}\big[\frac{q}{M_{X}}+\frac{1}{M_{R}}\sum\limits_{j}\int_{S}\frac{\alpha
_{i}P_{j}(w,t)}{d_{i}^{+}(w)}dw\big]~\lambda_{i}(T)=0\\
&  \dot{\gamma}_{i}(t)=-\frac{\partial H}{\partial{Y_{i}}}=\frac{1-q}{TM_{Y}%
}\quad\gamma_{i}(T)=0
\end{split}$}
\]
\[
\resizebox{0.99 \columnwidth}{!}{$
\begin{split}
&  \dot{\phi}_{ij}(t)=-\frac{\partial H}{\partial{Z_{ij}}}=-\frac{1}{M_{R}%
}\int_{S}\frac{\alpha_{i}P_{j}(w,t)}{d_{\!_{B}}^{+}(w)}dw\quad\phi
_{ij}(T)=\frac{\partial J_{f}}{\partial Z_{ij}}\Big|_{T}%
\end{split}$}
\]%
\[%
\resizebox{0.99 \columnwidth}{!}{$
\begin{split}
\dot{\eta}_{j}^{x}(t)=  &  -\frac{\partial H}{\partial s_{j}^{x}}\\
=  &  -\Bigg[\frac{1}{TM_{I}}\frac{\partial I_{j}(t)}{\partial s_{j}^{x}%
}+\frac{1}{TM_{R}}\sum_{j}\int_{S}\big(R(w,t)+R_{\!{Bj}}(w,t)\big)\frac
{\partial P_{j}(w,t)}{\partial s_{j}^{x}}dw\\
&  +\sum_{i}\frac{\partial}{\partial s_{j}^{x}}\lambda_{i}(t)\dot{X}%
_{i}(t)+\sum_{i}\frac{\partial}{\partial s_{j}^{x}}\gamma_{i}(t)\dot{Y}%
_{i}(t)+\sum_{i}\frac{\partial}{\partial s_{j}^{x}}\phi_{ij}(t)\dot{Z}%
_{ij}(t)\Bigg]
\end{split}$}
\]%
\[%
\resizebox{0.99 \columnwidth}{!}{$
\begin{split}
\dot{\eta}_{j}^{y}(t)=  &  -\frac{\partial H}{\partial s_{j}^{y}}\\
=  &  -\Bigg[\frac{1}{TM_{I}}\frac{\partial I_{j}(t)}{\partial s_{j}^{y}%
}+\frac{1}{TM_{R}}\sum_{j}\int_{S}\big(R(w,t)+R_{\!{Bj}}(w,t)\big)\frac
{\partial P_{j}(w,t)}{\partial s_{j}^{y}}dw\\
&  +\sum_{i}\frac{\partial}{\partial s_{j}^{y}}\lambda_{i}(t)\dot{X}%
_{i}(t)+\sum_{i}\frac{\partial}{\partial s_{j}^{y}}\gamma_{i}(t)\dot{Y}%
_{i}(t)+\sum_{i}\frac{\partial}{\partial s_{j}^{y}}\phi_{ij}(t)\dot{Z}%
_{ij}(t)\Bigg]
\end{split}$}
\]%
\[
\eta_{j}^{x}(T)=\eta_{j}^{y}(T)=0
\]
}
From \eqref{hamiltonian}, after some trigonometric manipulations, we get
\begin{equation}
\resizebox{0.89 \columnwidth}{!}{$
\begin{split}
&  H(\mathbf{X},{\boldsymbol{\lambda}},\mathbf{u},{\boldsymbol{\theta}}%
)=\frac{1}{T}\Big[qJ_{1}(t)-(1-q)J_{2}(t)+J_{3}(t)+J_{4}(t)\Big]\\
&  +\sum_{i}\lambda_{i}(t)\dot{X}_{i}(t)+\sum_{i}\gamma_{i}(t)\dot{Y}%
_{i}(t)+\sum_{i}\sum_{j}\phi_{ij}(t)\dot{Z}_{ij}(t)\\
&  +\sum_{j}u_{j}(t)\mbox{sgn}{\eta_{j}^{y}(t)}\sqrt{{\eta_{j}^{x}(t)}%
^{2}+{\eta_{j}^{y}(t)}^{2}}\sin(\theta_{j}(t)+\psi_{j}(t))
\end{split}$}
\label{HamiltonianU}%
\end{equation}
where $\tan\psi_{j}(t)=\frac{\eta_{j}^{x}(t)}{\eta_{j}^{y}(t)}$ for $\eta
_{j}^{y}(t)\neq0$ and $\psi_{j}(t)=\mbox{sgn}{\eta_{j}^{x}(t)}\frac{\pi}{2}$
if $\eta_{j}^{y}(t)=0$. Applying the Pontryagin principle to
\eqref{hamiltonian} with $(\mathbf{u}^{\ast},{\boldsymbol{\theta}}^{\ast})$
being the optimal control, we have:
\begin{equation}
H(\mathbf{X}^{\ast},{\boldsymbol{\lambda}}^{\ast},\mathbf{u}^{\ast
},{\boldsymbol{\theta}}^{\ast})=\min\limits_{\mathbf{u}(t),\boldsymbol{\theta
}(t)}H(\mathbf{X},{\boldsymbol{\lambda}},\mathbf{u},{\boldsymbol{\theta}})
\end{equation}
From \eqref{HamiltonianU} we see that we can always set the control
$\theta_{j}(t)$ to ensure that $\mbox{sgn}{\eta_{j}^{y}(t)}\sin(\theta
_{j}(t)+\psi_{j}(t))<0$. Hence, recalling that $0\leq u_{j}(t)\leq1$,
\begin{equation}
u_{j}^{\ast}(t)=1 \label{OptimU}%
\end{equation}
and
\begin{align}
\sin(\theta_{j}^{*}(t)+\psi_{j}(t))=1  &  \mbox{  if } \mu_{j}^{y}%
(t)<0\nonumber\\
\sin(\theta_{j}^{*}(t)+\psi_{j}(t))=-1  &  \mbox{  if } \mu_{j}^{y}(t)>0
\end{align}
Following the Hamiltonian definition in \eqref{hamiltonian} we have:
\begin{equation}
\frac{\partial H}{\partial\theta_{j}}=-\eta_{j}^{x}(t)u_{j}(t)\sin\theta
_{j}(t)+\eta_{j}^{y}(t)u_{j}(t)\cos\theta_{j}(t) \label{dHdtheta}%
\end{equation}
and setting $\frac{\partial H}{\partial\theta_{j}}=0$ the optimal heading
$\theta_{j}^{\ast}(t)$ should satisfy:%

\begin{equation}
\tan\theta_{j}^{\ast}(t)=\frac{{\eta_{j}^{y}}(t)}{{\eta_{j}^{x}}(t)}%
\end{equation}

Since $u_{j}^{\ast}(t)=1$, we only need to evaluate $\theta_{j}^{\ast}(t)$ for
all $t\in\lbrack0,T]$. This is accomplished by discretizing the problem in
time and numerically solving a TPBVP with a forward integration of the state
and a backward integration of the costate. Solving this problem becomes
intractable as the number of agents and targets grows. The fact that we are dealing with a hybrid dynamic system further complicates
the solution of a TPBVP. On the other hand, it enables us to make use of
Infinitesimal Perturbation Analysis (IPA) \cite{Cassandras2010} to carry out
the parametric trajectory optimization process discussed in the next section.
In particular, we propose a parameterization of agent trajectories allowing us
to utilize IPA to obtain an unbiased estimate for the objective function
gradient with respect to the trajectory parameters.

\subsection{Agent Trajectory Parameterization and Optimization}

\label{parametric} The key idea is to represent each agent's trajectory
through general parametric equations
\begin{equation}%
\begin{array}
[c]{ll}%
s_{j}^{x}(t)=f(\Theta_{j},\rho_{j}(t)),\text{ \ }\quad s_{j}^{y}%
(t)=g(\Theta_{j},\rho_{j}(t)) &
\end{array}
\label{param_traj}%
\end{equation}
where the function $\rho_{j}(t)$ controls the position of the agent on its
trajectory at time $t$ and $\Theta_{j}$ is a vector of parameters controlling
the shape and location of the agent $j$ trajectory. Let $\Theta=[\Theta
_{1},\dots,\Theta_{N}]$. We now replace problem $\mathbf{P1}$ in
(\ref{GenOptim}) by problem $\mathbf{P2}$:
\begin{equation}%
\begin{split}
\min\limits_{\Theta\in F_{\Theta}}  &  \frac{1}{T}\int_{0}^{T}\Big[qJ_{1}%
(\Theta,t)-(1-q)J_{2}(\Theta,t)+J_{3}(\Theta,t)\\
&  +J_{4}(\Theta,t)\Big]dt+J_{f}(\Theta,T)
\end{split}
\label{ParamOptim}%
\end{equation}
where we return to allowing arbitrary stochastic data arrival processes
$\{\sigma_{i}(t)\}$ so that $\mathbf{P2}$ is a parametric stochastic
optimization problem with the feasible parameter set $F_{\Theta}$
appropriately defined depending on \eqref{param_traj}. The cost function in
(\ref{ParamOptim}) is written as%
\[
J(\Theta,T;\mathbf{X}(\Theta,0))=E[\mathcal{L}(\Theta,T;\mathbf{X}%
(\Theta,0))]
\]
where $\mathcal{L}(\Theta,T;\mathbf{X}(\Theta,0))$ is a sample function
defined over $[0,T]$ and $\mathbf{X}(\Theta,0)$ is the initial value of the
state vector. For convenience, in the sequel we will use $\mathcal{L}_{i}$,
$i=1,\ldots,4$, and $\mathcal{L}_{f}$ to denote sample functions of $J_{i}$,
$i=1,\ldots,4$, and $J_{f}$ respectively. Note that in (\ref{ParamOptim}) we
suppress the dependence of the four objective function components on the
controls $\mathbf{u}(t)$ and $\boldsymbol{\theta}(t)$ and stress instead their
dependence on the parameter vector $\Theta$.

In the rest of the paper, we will consider two families of trajectories
motivated by a similar approach used in the multi-agent persistent monitoring
problem in \cite{Lin2015}: \emph{elliptical} trajectories and a more general
\emph{Fourier series} trajectory representation better suited for non-uniform
target topologies. The hybrid dynamics of the data harvesting system allow us
to apply the theory of IPA \cite{Cassandras2010} to obtain on line the
gradient of the sample function $\mathcal{L}(\Theta,T;\mathbf{X}(\Theta,0))$
with respect to $\Theta$. The value of the IPA\ approach is twofold: $(i)$ The
sample gradient $\nabla\mathcal{L}(\Theta,T)$ can be obtained on line based on
observable sample path data \emph{only}, and $(ii)$ $\nabla\mathcal{L}%
(\Theta,T)$ is an unbiased estimate of $\nabla J(\Theta,T)$ under mild
technical conditions as shown in \cite{Cassandras2010}. Therefore, we can use
$\nabla\mathcal{L}(\Theta,T)$ in a standard gradient-based stochastic
optimization algorithm
\begin{equation}
\Theta^{l+1}=\Theta^{l}-\boldsymbol{\nu}_{l}\nabla\mathcal{L}(\Theta
^{l},T),\text{ \ }l=0,1,\ldots\label{SAalgo}%
\end{equation}
to converge (at least locally) to an optimal parameter vector $\Theta^{\ast}$
with a proper selection of a step-size sequence $\{\boldsymbol{\nu}_{l}\}$
\cite{Kushner2003}. We emphasize that this process is carried out \emph{on
line}, i.e., the gradient is evaluated by observing a trajectory with given
$\Theta$ over $[0,T]$ and is iteratively adjusted until convergence is attained.

\subsubsection{IPA Calculus Review and Implementation}

Based on the events defined earlier, we will specify event time derivative and
state derivative dynamics for each mode of the hybrid system. In this process,
we will use the IPA notation from \cite{Cassandras2010} so that ${\tau_{k}}$
is the $k$th event time in an observed sample path of the hybrid system and
${\tau_{k}^{\prime}}=\frac{d\tau_{k}}{d\Theta}$, $\mathcal{X}^{\prime
}(t)=\frac{d\mathcal{X}}{d\Theta}$ are the Jacobian matrices of partial
derivatives with respect to all components of the controllable parameter
vector $\Theta$. Throughout the analysis we will be using $(\cdot)^{\prime}$
to show such derivatives. We will also use $f_{k}(t)=\frac{d\mathcal{X}}{dt}$
to denote the state dynamics in effect over an interevent time interval
$[{\tau_{k},\tau_{k+1})}$. We review next the three fundamental IPA equations
from \cite{Cassandras2010} based on which we will proceed.

First, events may be classified as exogenous or endogenous. An event is
exogenous if its occurrence time is independent of the parameter $\Theta$,
hence ${\tau_{k}^{\prime}}=0$. Otherwise, an endogenous event takes place when
a condition $g_{k}(\Theta,\mathcal{X})=0$ is satisfied, i.e., the state
$\mathcal{X}(t)$ reaches a switching surface described by $g_{k}%
(\Theta,\mathcal{X})$. In this case, it is shown in \cite{Cassandras2010} that%
\begin{equation}
{\tau_{k}^{\prime}}=-\Big(\frac{dg_{k}}{d\mathcal{X}}f_{k}(\tau_{k}%
^{-})\Big)^{-1}\Big(g^{\prime}_{k}+\frac{dg_{k}}{d\mathcal{X}}{\mathcal{X}%
^{\prime}(\tau_{k}^{-})}\Big) \label{tauprime}%
\end{equation}
as long as $\frac{\partial g_{k}}{\partial\mathcal{X}}f_{k}(\tau_{k}^{-}%
)\neq0$. It is also shown in \cite{Cassandras2010} that the state derivative
$\mathcal{X}^{\prime}(t)$ satisfies
\begin{equation}
\frac{d}{dt}\mathcal{X}^{\prime}(t)=\frac{df_{k}}{d\mathcal{X}}\mathcal{X}%
^{\prime}(t)+f^{\prime}_{k}(t),\text{ \ \ }t\in\lbrack{\tau_{k},\tau_{k+1})}
\label{dXdtprime}%
\end{equation}
\begin{equation}
\mathcal{X}^{\prime}(\tau_{k}^{+})=\mathcal{X}^{\prime}(\tau_{k}^{-}%
)+[f_{k-1}(\tau_{k}^{-})-f_{k}(\tau_{k}^{+})]{\tau_{k}}^{\prime}
\label{Xprime}%
\end{equation}
Then, $\mathcal{X}^{\prime}(t)$ for $t\in\lbrack{\tau_{k},\tau_{k+1})}$ is
calculated through
\begin{equation}
\mathcal{X}^{\prime}(t)=\mathcal{X}^{\prime}(\tau_{k}^{+})+\int_{\tau_{k}}%
^{t}\frac{d}{dt}\mathcal{X}^{\prime}(t)dt \label{Xprime_t}%
\end{equation}
Table \ref{eventlist} contains all possible \emph{endogenous} event types for
our hybrid system. To these, we add \emph{exogenous} events $\kappa_{i}$,
$i=1,...,M$, to allow for possible discontinuities (jumps) in the random
processes $\{\sigma_{i}(t)\}$ which affect the sign of $\sigma_{i}(t)-\mu
_{ij}p_{ij}(t)$ in (\ref{Xdot}). We will use the notation $e(\tau_{k})$ to
denote the event type occurring at $t=\tau_{k}$ with $e(\tau_{k})\in E$, the
event set consisting of all endogenous and exogenous events. Finally, we make
the following assumption which is needed in guaranteeing the unbiasedness of
the IPA gradient estimates: $(\mathbf{A6})$ Two events occur at the same time
w.p. $0$ unless one is directly caused by the other.

\subsubsection{Objective Function Gradient}

The sample function gradient $\nabla\mathcal{L}(\Theta,T)$ needed in
(\ref{SAalgo}) is obtained from (\ref{ParamOptim}) assuming a total number of
$K$ events over $[0~T]$ with $\tau_{\!_{K+1}}=T$ and $\tau_{0}=0$:
\begin{equation}
\resizebox{0.89 \columnwidth}{!}{$
\begin{split}
&  \nabla\mathcal{L}(\Theta,T;\mathbf{X}(\Theta;0))=\label{gradJ}\\
&  \quad\frac{1}{T}\nabla\Big[\int_{0}^{T}\Big(q\mathcal{L}_{1}(\Theta
,t)-(1-q)\mathcal{L}_{2}(\Theta,t)+\mathcal{L}_{3}(\Theta,t)\\
&  \quad+\mathcal{L}_{4}(\Theta,t)\Big)dt\Big]+\nabla\mathcal{L}_{f}%
(\Theta,T)\\
&  =\frac{1}{T}\nabla\Big[\sum\limits_{k=0}^{K}\int_{\tau_{k}}^{\tau_{k+1}%
}\Big(q\mathcal{L}_{1}(\Theta,t)-(1-q)\mathcal{L}_{2}(\Theta,t)+\mathcal{L}%
_{3}(\Theta,t)\\
&  \quad+\mathcal{L}_{4}(\Theta,t)\Big)dt\Big]+\nabla\mathcal{L}_{f}%
(\Theta,T)\\
&  =\frac{1}{T}\Big[\sum\limits_{k=0}^{K}q\Big(\int_{\tau_{k}}^{\tau_{k+1}%
}\nabla\mathcal{L}_{1}(\Theta,t)dt+\mathcal{L}_{1}(\Theta,\tau_{k+1}%
)\tau_{k+1}^{\prime}-\mathcal{L}_{1}(\Theta,\tau_{k})\tau_{k}^{\prime
}\Big)\\
&  \quad-(1-q)\Big(\int_{\tau_{k}}^{\tau_{k+1}}\nabla\mathcal{L}_{2}%
(\Theta,t)dt+\mathcal{L}_{2}(\Theta,\tau_{k+1})\tau_{k+1}^{\prime}%
-\mathcal{L}_{2}(\Theta,\tau_{k})\tau_{k}^{\prime}\Big)\\
&  \quad+\Big(\int_{\tau_{k}}^{\tau_{k+1}}\nabla\mathcal{L}_{3}(\Theta
,t)dt+\mathcal{L}_{3}(\Theta,\tau_{k+1})\tau_{k+1}^{\prime}-\mathcal{L}%
_{3}(\Theta,\tau_{k})\tau_{k}^{\prime}\Big)\\
&  \quad+\Big(\int_{\tau_{k}}^{\tau_{k+1}}\nabla\mathcal{L}_{4}(\Theta
,t)dt+\mathcal{L}_{4}(\Theta,\tau_{k+1})\tau_{k+1}^{\prime}-\mathcal{L}%
_{4}(\Theta,\tau_{k})\tau_{k}^{\prime}\Big)\Big]\\
&  \quad+\nabla\mathcal{L}_{f}(\Theta,T)\\
&  =\frac{1}{T}\Big[\sum\limits_{k=0}^{K}\int_{\tau_{k}}^{\tau_{k+1}%
}\Big(q\nabla\mathcal{L}_{1}(\Theta,t)-(1-q)\nabla\mathcal{L}_{2}%
(\Theta,t)+\nabla\mathcal{L}_{3}(\Theta,t)\\
&  \quad+\nabla\mathcal{L}_{4}(\Theta,t)\Big)dt\Big]+\nabla\mathcal{L}%
_{f}(\Theta,T)
\end{split}
$}
\end{equation}
The last step follows from the continuity of the state variables which causes
adjacent limit terms in the sum to cancel out. Therefore, $\nabla
\mathcal{L}(\Theta,T)$ does not have any direct dependence on any $\tau
_{k}^{\prime}$; this dependence is indirect through the state derivatives
involved in the four individual gradient terms.

Referring to (\ref{J1}), the first term in \eqref{gradJ} involves
$\nabla\mathcal{L}_{1}(\Theta,t)$ which is as a sum of $X_{i}^{\prime}(t)$
derivatives. Similarly, $\nabla\mathcal{L}_{2}(\Theta,t)$ is a sum of
$Y_{i}^{\prime}(t)$ derivatives and $\nabla\mathcal{L}_{f}(\Theta,T)$ requires
only $Z_{ij}^{\prime}(T)$. The third term, $\nabla\mathcal{L}_{3}(\Theta,t)$,
requires derivatives of $I_{j}(t)$ in (\ref{Ij}) which depend on the
derivatives of the max function in \eqref{Dplus} and the agent state
derivatives $s_{j}^{\prime}(t)$ with respect to $\Theta$. The term
$\nabla\mathcal{L}_{4}(\Theta,t)$ needs the values of $X_{i}^{\prime}(t)$ and
$Z_{ij}^{\prime}(t)$. The gradients of the last two terms are derived in the
appendix. Possible discontinuities in these derivatives occur when any of the
last four events in Table \ref{eventlist} takes place.

In summary, the evaluation of \eqref{gradJ} requires the state derivatives
$X_{i}^{\prime}(t)$, $Z_{ij}^{\prime}(t)$, $Y_{i}^{\prime}(t)$, and
$s_{j}^{{\prime}}(t)$. The latter are easily obtained for any specific choice
of $f$ and $g$ in (\ref{param_traj}) and are shown in Appendix
\ref{AppEllipse}. The former require a rather laborious use of (\ref{tauprime}%
)-(\ref{Xprime}) which, however, reduces to a simple set of state derivative
dynamics as shown next.

\begin{proposition}: After an event occurrence at $t=\tau_{k}$, the state
derivatives $X_{i}^{\prime}(\tau_{k}^{+})$, $Y_{i}^{\prime}(\tau_{k}^{+})$,
$Z_{ij}^{\prime}(\tau_{k}^{+})$, with respect to the controllable parameter
$\Theta$ satisfy the following:%
\[
X_{i}^{\prime}(\tau_{k}^{+})=\left\{
\begin{array}
[c]{ll}%
0 & \text{if }e(\tau_{k})=\xi_{i}^{0}\\
X_{i}^{\prime}(\tau_{k}^{-})-\mu_{il}p_{il}(\tau_{k}){\tau_{k}^{\prime}} &
\text{if }e(\tau_{k})=\delta_{ij}^{+}\\
X_{i}^{\prime}(\tau_{k}^{-}) & \text{otherwise}%
\end{array}
\right.
\]
where $l\neq j$ with $p_{il}(\tau_{k})>0$ if such $l$ exists and ${\tau
_{k}^{\prime}=}\frac{\partial d_{ij}(s_{j})}{\partial s_{j}}s_{j}^{\prime
}\left(  \frac{\partial d_{ij}(s_{j})}{\partial s_{j}}\dot{s}_{j}(\tau
_{k})\right)  ^{-1}$.
\begin{align*}
Y_{i}^{\prime}(\tau_{k}^{+})  &  =\left\{
\begin{array}
[c]{ll}%
Y_{i}^{\prime}(\tau_{k}^{-})+Z_{ij}^{\prime}(\tau_{k}^{-}) & \text{if }%
e(\tau_{k})=\zeta_{ij}^{0}\\
Y_{i}^{\prime}(\tau_{k}^{-}) & \text{otherwise}%
\end{array}
\right. \\
Z_{ij}^{\prime}(\tau_{k}^{+})  &  =\left\{
\begin{array}
[c]{ll}%
0 & \text{if }e(\tau_{k})=\zeta_{ij}^{0}\\
Z_{ij}^{\prime}(\tau_{k}^{-})+X_{i}^{\prime}(\tau_{k}^{-}) & \text{if }%
e(\tau_{k})=\xi_{i}^{0}\\
Z_{ij}^{\prime}(\tau_{k}^{-}) & \text{otherwise}%
\end{array}
\right.
\end{align*}
where $e(\tau_{k})=\xi_{i}^{0}$ occurs when $j$ is connected to target $i$.
\end{proposition}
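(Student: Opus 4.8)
The plan is to derive each jump relation directly from the IPA reset formula (\ref{Xprime}), applied separately to the scalar components $X_i$, $Y_i$, $Z_{ij}$ of $\mathcal{X}$, after reading off, for every event type $e(\tau_k)\in E$, the dynamics $f_{k-1}$ active on $[\tau_{k-1},\tau_k)$ and $f_k$ active on $[\tau_k,\tau_{k+1})$ from (\ref{Xdot}), (\ref{Zdot}), (\ref{Ydot}), together with the event-time derivative $\tau_k'$ from (\ref{tauprime}). For the endogenous events I would use the switching functions $g_k=X_i$ for $\xi_i^0$, $g_k=Z_{ij}$ for $\zeta_{ij}^0$, and $g_k=d_{ij}(s_j)-r_{ij}$ (resp.\ $d_{Bj}(s_j)-r_{Bj}$) for $\delta_{ij}^{0/+}$ (resp.\ $\Delta_j^{0/+}$); the exogenous jumps $\kappa_i$ have $\tau_k'=0$ by definition. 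None of these $g_k$ depends explicitly on $\Theta$, so $g_k'=0$ and (\ref{tauprime}) collapses to $\tau_k'=-\big(\tfrac{\partial g_k}{\partial\mathcal{X}}f_k(\tau_k^-)\big)^{-1}\tfrac{\partial g_k}{\partial\mathcal{X}}\mathcal{X}'(\tau_k^-)$, which for $\delta_{ij}^+$ is exactly the displayed expression because $\tfrac{\partial g_k}{\partial\mathcal{X}}$ selects the $s_j$-block and $\tfrac{\partial d_{ij}}{\partial s_j}\dot s_j(\tau_k)$ is the (nonzero) scalar denominator.

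The bulk of the argument is then the observation that almost every (event, component) pair is inert. Whenever the event is a range-boundary crossing -- $\delta_{ij}^{0/+}$ or $\Delta_j^{0/+}$ -- continuity of $p(\cdot)$ together with (A2) forces $p_{ij}(\tau_k)=0$ (resp.\ $p_{Bj}(\tau_k)=0$) at $\tau_k$, so by (A5) both the collection and the delivery rates in (\ref{Xdot}), (\ref{Zdot}), (\ref{Ydot}) vanish on \emph{both} sides of $\tau_k$; hence $f_{k-1}(\tau_k^-)-f_k(\tau_k^+)=0$ for $X_i$, $Y_i$ and $Z_{ij}$ and the reset term in (\ref{Xprime}) disappears -- the single exception being a $\delta_{ij}^+$ at which agent $j$ vacates target $i$ and, by (A4), is immediately replaced by some $l\neq j$ with $p_{il}(\tau_k)>0$, which produces exactly the $\delta_{ij}^+$ row for $X_i$. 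Similarly $\xi_i^+$ (and the induced $\kappa_i$) occurs precisely on $\sigma_i-\mu_{ij}p_{ij}=0$, so the corresponding $\dot X_i$ and $\dot Z_{ij}$ rate differences are again zero; and any event carrying indices other than $i$ (for $X_i$, $Y_i$) or $ij$ (for $Z_{ij}$) leaves those dynamics untouched. This reduces the claim to the three nontrivial rows: $\xi_i^0$ for $X_i$; $\zeta_{ij}^0$ for $Y_i$; and $\zeta_{ij}^0$ and $\xi_i^0$ for $Z_{ij}$.

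Each of these follows from the same short cancellation in (\ref{Xprime}): the factor $\big(\tfrac{\partial g_k}{\partial\mathcal{X}}f_k(\tau_k^-)\big)^{-1}$ inside $\tau_k'$ is, up to sign, the reciprocal of $f_{k-1}(\tau_k^-)-f_k(\tau_k^+)$. For $\xi_i^0$, $\dot X_i$ switches from $\sigma_i-\mu_{ij}p_{ij}$ (nonzero, since $X_i$ is strictly decreasing to $0$) to $0$, and $\tau_k'=-X_i'(\tau_k^-)/(\sigma_i-\mu_{ij}p_{ij})$, giving $X_i'(\tau_k^+)=X_i'(\tau_k^-)-X_i'(\tau_k^-)=0$; the same event, with $j$ connected to $i$, switches $\dot Z_{ij}$ from $\mu_{ij}p_{ij}$ to $\tilde\mu_{ij}p_{ij}=\sigma_i$ via (\ref{mutilde}), so $Z_{ij}'(\tau_k^+)=Z_{ij}'(\tau_k^-)+X_i'(\tau_k^-)$. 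For $\zeta_{ij}^0$, $\dot Z_{ij}$ switches from $-\beta_{ij}p_{Bj}$ to $0$ while $\dot Y_i$ loses the term $\beta_{ij}p_{Bj}\mathbf 1[Z_{ij}>0]$, and $\tau_k'=Z_{ij}'(\tau_k^-)/(\beta_{ij}p_{Bj})$, producing $Z_{ij}'(\tau_k^+)=0$ and $Y_i'(\tau_k^+)=Y_i'(\tau_k^-)+Z_{ij}'(\tau_k^-)$. Assumption (A6) guarantees events are isolated or related by direct causation, so (\ref{Xprime}) may be applied one event at a time and the induced replacements above are legitimate. The step I expect to be the main obstacle is the bookkeeping: correctly identifying, via (A4), which agent is actually connected to a target across a $\delta_{ij}^+$ transition (and whether agent $j$ was connected at all), since this is what decides whether $\dot X_i$ changes and which $\mu_{il}p_{il}(\tau_k)$ enters the reset term.
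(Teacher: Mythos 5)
Your proposal is correct and follows essentially the same route as the paper's proof in Appendix D: applying the IPA relations (\ref{tauprime})--(\ref{Xprime}) event by event with the same switching functions ($g_k=X_i$, $g_k=Z_{ij}$, $g_k=d_{ij}-r_{ij}$, $g_k=d_{\!_{Bj}}-r_{\!_{Bj}}$), using $p_{ij}(\tau_k)=0$ at range-boundary crossings, (A5) to kill simultaneous collection/delivery, and the agent-handover via (A4) to isolate the $\delta_{ij}^{+}$ exception, then carrying out the same cancellations for $\xi_i^0$ and $\zeta_{ij}^0$. Your grouping of the inert (event, component) pairs into a single continuity argument is a more compact presentation of the exhaustive case enumeration the paper performs, but the underlying computations are identical.
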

\begin{proof}: See \eqref{Xprime0}, \eqref{xiplus}, \eqref{Xplusjump},
\eqref{Yprime}, \eqref{yrplus}, \eqref{zijplus}, \eqref{Zprime0},
\eqref{Zprime01} in Appendix \ref{IPAapp}.
\end{proof}

This result shows that only three of the events in $E$ can actually cause
discontinuous changes to the state derivatives. Further, note that
$X_{i}^{\prime}(t)$ is reset to zero after a $\xi_{i}^{0}$ event. Moreover,
when such an event occurs, note that $Z_{ij}^{\prime}(t)$ is coupled to
$X_{i}^{\prime}(t)$. Similarly for $Z^{\prime}_{ij}(t)$ and $Y^{\prime}_{i}(t)
$ when event $\zeta_{ij}^{0}$ occurs, showing that perturbations in $\Theta$
can only propagate to an adjacent queue when that queue is emptied.

\begin{proposition}: The state derivatives $X_{i}^{\prime}(\tau_{k+1}^{-})$,
$Y_{i}^{\prime}(\tau_{k+1}^{-})$ with respect to the controllable parameter
$\Theta$ satisfy the following after an event occurrence at $t=\tau_{k}$:%
\begin{align*}
X_{i}^{\prime}(\tau_{k+1}^{-})  &  =\left\{
\begin{array}
[c]{ll}%
0 & \text{if }e(\tau_{k})=\xi_{i}^{0}\\
X_{i}^{\prime}(\tau_{k}^{+})-\int_{\tau_{k}}^{\tau_{k+1}}\mu_{ij}%
p_{ij}^{\prime}(u)du & \text{otherwise}%
\end{array}
\right. \\
Y_{i}^{\prime}(\tau_{k+1}^{-})  &  =Y_{i}^{\prime}(\tau_{k}^{+})+\int
_{\tau_{k}}^{\tau_{k+1}}\beta_{i}^{\prime}(u)du
\end{align*}
where $j$ is such that $p_{ij}(t)>0$, $t\in\lbrack{\tau_{k},\tau_{k+1})}$.
\end{proposition}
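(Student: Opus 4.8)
The plan is to integrate the IPA state-derivative equations (\ref{dXdtprime})--(\ref{Xprime_t}) over the single inter-event interval $[\tau_{k},\tau_{k+1})$, using $X_{i}^{\prime}(\tau_{k}^{+})$ and $Y_{i}^{\prime}(\tau_{k}^{+})$ from the preceding proposition as initial conditions. The first step is to note that the discrete mode is constant on $(\tau_{k},\tau_{k+1})$: since $\tau_{k+1}$ is the next event time, no event of $E$ occurs in the open interval, so the agent--target and agent--base connection patterns are fixed (a change would be a $\delta_{ij}^{0},\delta_{ij}^{+},\Delta_{j}^{0}$ or $\Delta_{j}^{+}$ event), each indicator $\mathbf{1}[Z_{ij}(t)>0]$ in $\beta_{i}(t)$ is constant (a change would be $\zeta_{ij}^{0}$ or the $\delta_{ij}^{0}$-induced event), and the active branch of (\ref{Xdot}) for $X_{i}$ is fixed (a switch would be $\xi_{i}^{0}$ or $\xi_{i}^{+}$). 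Hence $\dot{X}_{i}$ and $\dot{Y}_{i}$ keep a fixed functional form on the interval; moreover $\dot{X}_{i}$ does not depend on $X_{i}$ and $\dot{Y}_{i}$ does not depend on $Y_{i}$, so in the corresponding components of (\ref{dXdtprime}) the homogeneous term $\frac{df_{k}}{d\mathcal{X}}\mathcal{X}^{\prime}$ vanishes and only the forcing term survives.

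For $Y_{i}$ (no case split): on the interval $\dot{Y}_{i}(t)=\sum_{j\in\mathcal{A}_{i}}\beta_{ij}p_{Bj}(s_{j}(t))$ with $\mathcal{A}_{i}=\{j:Z_{ij}>0\}$ fixed, so differentiating with respect to $\Theta$ (chain rule through $s_{j}$) gives $\frac{d}{dt}Y_{i}^{\prime}(t)=\sum_{j\in\mathcal{A}_{i}}\beta_{ij}\frac{\partial p_{Bj}}{\partial s_{j}}s_{j}^{\prime}(t)=:\beta_{i}^{\prime}(t)$, and (\ref{Xprime_t}) yields $Y_{i}^{\prime}(\tau_{k+1}^{-})=Y_{i}^{\prime}(\tau_{k}^{+})+\int_{\tau_{k}}^{\tau_{k+1}}\beta_{i}^{\prime}(u)\,du$. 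For $X_{i}$, in the case $e(\tau_{k})=\xi_{i}^{0}$ the clamped branch $\dot{X}_{i}\equiv0$ of (\ref{Xdot}) is in force on the whole interval (once $X_{i}$ reaches $0$, the inequality $\sigma_{i}(t)\leq\mu_{ij}p_{ij}(t)$ holding it there cannot fail before $\tau_{k+1}$ without triggering $\xi_{i}^{+}$, an exogenous $\kappa_{i}$, or $\delta_{ij}^{+}$), so $X_{i}\equiv0$ and hence $X_{i}^{\prime}(t)\equiv0$, which together with the reset $X_{i}^{\prime}(\tau_{k}^{+})=0$ gives $X_{i}^{\prime}(\tau_{k+1}^{-})=0$. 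In the complementary case the unclamped branch $\dot{X}_{i}(t)=\sigma_{i}(t)-\mu_{ij}p_{ij}(s_{j}(t))$ is in force, with $j$ the unique connected agent by $(\mathbf{A4})$ (and no $p_{ij}$-term at all if none is connected, making the integral below vacuous); since $\{\sigma_{i}(t)\}$ is independent of $\Theta$ its $\Theta$-derivative is $0$, leaving $\frac{d}{dt}X_{i}^{\prime}(t)=-\mu_{ij}\frac{\partial p_{ij}}{\partial s_{j}}s_{j}^{\prime}(t)=-\mu_{ij}p_{ij}^{\prime}(t)$, and (\ref{Xprime_t}) gives $X_{i}^{\prime}(\tau_{k+1}^{-})=X_{i}^{\prime}(\tau_{k}^{+})-\int_{\tau_{k}}^{\tau_{k+1}}\mu_{ij}p_{ij}^{\prime}(u)\,du$.

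I expect the only genuinely delicate point to be the bookkeeping that pins down, for each possible $e(\tau_{k})$, exactly which branch of the dynamics (\ref{Xdot}), (\ref{Ydot}) is active on $(\tau_{k},\tau_{k+1})$, and hence which of the two $X_{i}$ formulas applies; this is where assumption $(\mathbf{A6})$ is needed, as it rules out the measure-zero coincidences (e.g., an unrelated event occurring exactly when $X_{i}$ would leave $0$) that would otherwise make the mode ill-defined on the interval. Everything else is a one-line chain-rule differentiation of the fixed mode dynamics followed by an elementary integration via (\ref{Xprime_t}), with no homogeneous term to propagate precisely because $\dot{X}_{i}$ and $\dot{Y}_{i}$ do not feed back through $X_{i}$ and $Y_{i}$. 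The expressions for $s_{j}^{\prime}(t)$ (and hence $p_{ij}^{\prime}$, $p_{Bj}^{\prime}$) needed to evaluate the integrands follow directly from the chosen trajectory parameterization (\ref{param_traj}), as worked out in the appendix.
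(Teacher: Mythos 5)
Your proof is correct and follows essentially the same route as the paper's: the paper's own argument (equations \eqref{Xprimet1}, \eqref{dxdt}, \eqref{dydt} in Appendix~\ref{IPAapp}) likewise applies \eqref{dXdtprime} on the inter-event interval, observes that $\partial\dot{X}_i/\partial X_i=\partial\dot{Y}_i/\partial Y_i=0$ so only the forcing terms $-\mu_{ij}p_{ij}^{\prime}(t)$ and $\beta_i^{\prime}(t)$ survive (with $\sigma_i^{\prime}=0$), and integrates via \eqref{Xprime_t}. Your added remarks on mode constancy between events and on assumption $(\mathbf{A6})$ only make explicit what the paper leaves implicit.
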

\begin{proof}: See \eqref{Xprimet1}, \eqref{dxdt} and \eqref{dydt} in Appendix
\ref{IPAapp}.
\end{proof}
\begin{proposition}: The state derivatives $Z_{ij}^{\prime}(\tau_{k+1}^{+})$
with respect to the controllable parameter $\Theta$ satisfy the following
after an event occurrence at $t=\tau_{k}$:\newline\emph{i}- If $j$ is
connected to target $i$,%
\[
Z_{ij}^{\prime}(\tau_{k+1}^{-})=\left\{
\begin{array}
[c]{ll}%
Z_{ij}^{\prime}(\tau_{k}^{+})\qquad\qquad\text{if }e(\tau_{k})=\xi_{i}%
^{0},\text{ }\zeta_{ij}^{0}\text{ or }\delta_{ij}^{+} & \\
Z_{ij}^{\prime}(\tau_{k}^{+})+\int_{\tau_{k}}^{\tau_{k+1}}\mu_{ij}%
p_{ij}^{\prime}(u)du\quad\text{otherwise} &
\end{array}
\right.
\]
\emph{ii}- If $j$ is connected to $B$ with $Z_{ij}(\tau_{k})>0$,
\[
Z_{ij}^{\prime}(\tau_{k+1}^{-})=Z_{ij}^{\prime}(\tau_{k}^{+})-\int_{\tau_{k}%
}^{\tau_{k+1}}\beta_{ij}p_{Bj}^{\prime}(u)du
\]
\emph{iii}- Otherwise, $Z_{ij}^{\prime}(\tau_{k+1}^{-})=Z_{ij}^{\prime}%
(\tau_{k}^{+})$.
\end{proposition}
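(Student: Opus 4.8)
The plan is to apply the fundamental IPA state-derivative formula \eqref{Xprime_t}, namely $\mathcal{X}^{\prime}(\tau_{k+1}^{-}) = \mathcal{X}^{\prime}(\tau_{k}^{+}) + \int_{\tau_{k}}^{\tau_{k+1}} \frac{d}{dt}\mathcal{X}^{\prime}(t)\,dt$, to the component $Z_{ij}$, using that the jump term $\mathcal{X}^{\prime}(\tau_{k}^{+})$ is already handled by the first Proposition. What remains is to integrate \eqref{dXdtprime} over the interevent interval $[\tau_{k},\tau_{k+1})$, which requires computing $\frac{df_{k}}{d\mathcal{X}}\,\mathcal{X}^{\prime}(t) + f_{k}^{\prime}(t)$ for the $Z_{ij}$-dynamics in effect on that interval. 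The key observation is that over a single interevent interval the discrete mode is fixed, so exactly one of the three dynamics regimes in \eqref{Zdot}–\eqref{mutilde} applies, and I would treat the three cases in the statement separately according to which regime is active.

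First I would handle case (i), $j$ connected to target $i$. Here, by assumption (A5), $p_{Bj}(t)=0$ on the interval, so \eqref{Zdot} gives $\dot{Z}_{ij}(t) = \tilde{\mu}_{ij}(t)\,p_{ij}(t)$ whenever this is positive. I split on whether $X_i$ is empty on the interval: if $X_i(t)>0$ then $\tilde\mu_{ij}=\mu_{ij}$ (constant), so $\dot Z_{ij}=\mu_{ij}p_{ij}(t)$ has no explicit $\mathcal{X}$-dependence beyond the agent position hidden in $p_{ij}$; differentiating with respect to $\Theta$ and integrating gives the additive term $\int_{\tau_k}^{\tau_{k+1}}\mu_{ij}p_{ij}^{\prime}(u)\,du$, which is the "otherwise" branch. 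If instead the interval begins with event $\xi_i^0$ (so $X_i\equiv 0$ thereafter while $j$ stays connected), then $\tilde\mu_{ij}p_{ij}=\sigma_i(t)$, giving $\dot Z_{ij}=\sigma_i(t)-\beta_{ij}p_{Bj}(t)=\sigma_i(t)$; since $\sigma_i$ is an exogenous process independent of $\Theta$, its derivative vanishes and $Z_{ij}^{\prime}$ is constant over the interval, yielding $Z_{ij}^{\prime}(\tau_{k+1}^{-})=Z_{ij}^{\prime}(\tau_k^{+})$. The events $\zeta_{ij}^0$ and $\delta_{ij}^{+}$ are the remaining triggers that place the system into a regime where $\dot Z_{ij}=0$ on the interval (the buffer stays empty, or the agent has just left range), again giving the constant branch. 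Case (ii), $j$ connected to $B$ with $Z_{ij}(\tau_k)>0$: now $p_{ij}(t)=0$, so \eqref{Zdot} gives $\dot Z_{ij}(t)=-\beta_{ij}p_{Bj}(t)$ with $\beta_{ij}$ constant, hence integrating $-\beta_{ij}p_{Bj}^{\prime}(u)$ produces the stated delivery term. Case (iii) is the idling/disconnected regime where $p_{ij}=p_{Bj}=0$, so $\dot Z_{ij}\equiv 0$ and the derivative is trivially unchanged.

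I expect the main obstacle to be the bookkeeping around the $X_i=0$ sub-regime in case (i): one must verify that once $\xi_i^0$ fires with $j$ connected, the sign condition in \eqref{Zdot} keeps $\dot Z_{ij}$ in the "$\sigma_i(t)$" branch (not the zero branch) for the remainder of the interval, and that the $\tilde\mu_{ij}$ definition in \eqref{mutilde} correctly collapses $\tilde\mu_{ij}p_{ij}$ to $\sigma_i$ rather than to something $\Theta$-dependent. A careful appeal to (A5) (no simultaneous collect/deliver, so $p_{ij}p_{Bj}=0$) and to the piecewise-continuity of $\sigma_i$ closes this. The remaining subtlety is ensuring that any $\tau_k^{\prime}$-dependence is confined to the jump term $Z_{ij}^{\prime}(\tau_k^{+})$ from the first Proposition, so that the integral terms here carry no explicit event-time sensitivity — this follows because the integrands $\mu_{ij}p_{ij}^{\prime}(u)$ and $\beta_{ij}p_{Bj}^{\prime}(u)$ depend on $\Theta$ only through the agent trajectory $s_j(\cdot)$, whose derivatives $s_j^{\prime}(u)$ are the smooth parametric quantities computed in Appendix \ref{AppEllipse}. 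Assembling the three cases and substituting into \eqref{Xprime_t} gives the claimed formulas; the detailed algebra is deferred to the referenced equations in Appendix \ref{IPAapp}.
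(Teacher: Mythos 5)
Your proposal is correct and follows essentially the same route as the paper: both apply the interevent IPA evolution \eqref{dXdtprime}--\eqref{Xprime_t} to the $Z_{ij}$ dynamics \eqref{Zdot}, case-split on the active mode, and use that $\tilde{\mu}_{ij}p_{ij}=\sigma_{i}$ is $\Theta$-independent after $\xi_{i}^{0}$, that $p_{ij}p_{\!_{Bj}}=0$ by $(\mathbf{A5})$, and that $\dot{Z}_{ij}=0$ after $\zeta_{ij}^{0}$ or $\delta_{ij}^{+}$ --- exactly the content of \eqref{dzdt}, \eqref{dzdt1}, \eqref{Zprimet0} and \eqref{ZprimetB}. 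The only organizational difference is that you group by dynamics regime while the appendix groups by triggering event type; the computations are the same.
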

\begin{proof}: See \eqref{dzdt}, \eqref{dzdt1}, \eqref{Zprimet0} and
\eqref{ZprimetB} in Appendix \ref{IPAapp}.
\end{proof}
\begin{corollary}
\label{corollary1} The state derivatives $X_{i}^{\prime}(t)$, $Z_{ij}^{\prime
}(t)$, $Y_{i}^{\prime}(t)$ with respect to the controllable parameter $\Theta$
are independent of the random data arrival processes $\{\sigma_{i}(t)\}$,
$i=1,\ldots,M$.
\end{corollary}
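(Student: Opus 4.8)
The plan is to derive the claim directly from the three propositions immediately preceding it, by a finite induction on the event index $k$, after pinning down the single place where the arrival rates $\{\sigma_i(t)\}$ might enter. I would begin with two ``seed'' facts. First, under the trajectory parameterization (\ref{param_traj}) the agent states $s_j(t)$ and their Jacobians $s_j'(t)$ are functions of $\Theta$ and $t$ only, so they carry no dependence on $\{\sigma_i(t)\}$; consequently the rates $p_{ij}(t)=p(w_i,s_j(t))$ and $p_{\!_{Bj}}(t)=p(w_{\!_{B}},s_j(t))$ of (\ref{Pij})--(\ref{PB}) and their parameter derivatives $p_{ij}'(t)=\nabla_{s}p(w_i,s_j(t))\,s_j'(t)$, $p_{\!_{Bj}}'(t)$ are $\sigma$-free as well. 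Second, the initial state $\mathbf{X}(\Theta,0)$, hence $X_i'(0)$, $Y_i'(0)$, $Z_{ij}'(0)$, is prescribed data and does not depend on the arrival processes. These two facts are what the induction propagates.

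Next I would run the induction. The base case $k=0$ holds by the second seed fact. For the step, assume $X_i'(\tau_k^{-})$, $Y_i'(\tau_k^{-})$, $Z_{ij}'(\tau_k^{-})$ are $\sigma$-free for all $i,j$. The first of the three preceding propositions writes the post-event derivatives $X_i'(\tau_k^{+})$, $Y_i'(\tau_k^{+})$, $Z_{ij}'(\tau_k^{+})$ as affine combinations of the pre-event ones, with coefficients built from the structural constants $\mu_{ij}$, $\beta_{ij}$, the observed rate values $p_{ij}(\tau_k)$, and the event-time derivative $\tau_k'$; here I would note that for the geometric events ($\delta_{ij}^{+}$, $\delta_{ij}^{0}$, $\Delta_j^{+}$, $\Delta_j^{0}$) the expression (\ref{tauprime}) for $\tau_k'$ reduces to a ratio of quantities built from $d_{ij}$, $s_j$, $\dot{s}_j$ (all $\sigma$-free), while the exogenous events $\kappa_i$ have $\tau_k'=0$. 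The remaining two preceding propositions then give the pre-event derivatives at $\tau_{k+1}^{-}$ as the post-event values at $\tau_k^{+}$ corrected by integrals of $\mu_{ij}p_{ij}'(u)$, of $\beta_i'(u)=\sum_j\beta_{ij}p_{\!_{Bj}}'(u)\mathbf{1}[Z_{ij}(u)>0]$, and of $\beta_{ij}p_{\!_{Bj}}'(u)$; every integrand is assembled from structural constants, the $\sigma$-free derivatives of the first seed fact, and the observed indicators $\mathbf{1}[Z_{ij}(u)>0]$, which are constant on each interevent interval. Hence $X_i'(\tau_{k+1}^{-})$, $Y_i'(\tau_{k+1}^{-})$, $Z_{ij}'(\tau_{k+1}^{-})$ are again $\sigma$-free, and since $[0,T]$ contains finitely many events, the claim holds for all $t\in[0,T]$.

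The step I expect to be the genuine obstacle --- and the one already resolved inside the preceding propositions --- is the queue-emptying event $\xi_i^{0}$. There (\ref{tauprime}) yields $\tau_k'=-X_i'(\tau_k^{-})/\dot{X}_i(\tau_k^{-})$ with $\dot{X}_i(\tau_k^{-})=\sigma_i(\tau_k^{-})-\mu_{ij}p_{ij}(\tau_k^{-})$ by (\ref{Xdot}), so $\tau_k'$ \emph{by itself} carries an explicit $\sigma_i$. The cancellation is that in the jump relation (\ref{Xprime}) the very same factor $\dot{X}_i(\tau_k^{-})$ multiplies $\tau_k'$, giving $X_i'(\tau_k^{+})=X_i'(\tau_k^{-})+\dot{X}_i(\tau_k^{-})\tau_k'=0$ with the $\sigma_i$ disappearing; using $p_{ij}(t)p_{\!_{Bj}}(t)=0$ from $(\mathbf{A5})$ --- so that at $\xi_i^{0}$ the rate $\dot{Z}_{ij}$ of (\ref{Zdot}) changes only through its $\tilde{\mu}_{ij}p_{ij}$ term --- the induced change of $Z_{ij}'$ collapses in the same way to $Z_{ij}'(\tau_k^{+})=Z_{ij}'(\tau_k^{-})+X_i'(\tau_k^{-})$, again $\sigma$-free. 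Because no other event or continuous mode reintroduces $\sigma_i$, the induction closes, which is exactly the robustness of the IPA derivatives to the stochastic data generation asserted by the corollary.
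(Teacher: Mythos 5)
Your proof is correct and takes essentially the same route as the paper: the paper's proof is simply ``follows directly from the three Propositions,'' and your induction over the event index, together with the observation that the $\sigma_i$ appearing in $\tau_k'$ at a $\xi_i^0$ event cancels against the factor $\dot{X}_i(\tau_k^-)$ in the jump relation, is exactly the mechanism carried out in the paper's Appendix D (equations \eqref{Xprime0} and \eqref{Zprime01}) that makes those Propositions $\sigma$-free in the first place. Your explicit seed facts and the remark that $\sigma$ still enters indirectly through the observed event times $\tau_k$ match the paper's own discussion following the corollary.
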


\begin{proof}: Follows directly from the three Propositions.
\end{proof}

There are a few important consequences of these results. First, as the
Corollary asserts, one can apply IPA regardless of the characteristics of the
random processes $\{\sigma_{i}(t)\}$. This robustness property does not mean
that these processes do not affect the values of the $X_{i}^{\prime}(t)$,
$Z_{ij}^{\prime}(t)$, $Y_{i}^{\prime}(t)$; this happens through the values of
the event times ${\tau_{k}}$, $k=1,2,\ldots$, which are observable and enter
the computation of these derivatives as seen above.

Second, the IPA estimation process is event-driven: $X_{i}^{\prime}(\tau
_{k}^{+})$, $Y_{i}^{\prime}(\tau_{k}^{+})$, $Z_{ij}^{\prime}(\tau_{k}^{+})$
are evaluated at event times and then used as initial conditions for the
evaluations of $X_{i}^{\prime}(\tau_{k+1}^{-})$, $Y_{i}^{\prime}(\tau
_{k+1}^{-})$, $Z_{ij}^{\prime}(\tau_{k+1}^{-})$ along with the integrals
appearing in Propositions 2,3 which can also be evaluated at $t=\tau_{k+1}$.
Consequently, this approach is scalable in the number of events in the system
as the number of agents and targets increases.

Third, despite the elaborate derivations in the Appendix, the actual
implementation reflected by the three Propositions is simple. Finally,
returning to (\ref{gradJ}), note that the integrals involving $\nabla
\mathcal{L}_{1}(\Theta,t)$, $\nabla\mathcal{L}_{2}(\Theta,t)$ are directly
obtained from $X_{i}^{\prime}(t)$, $Y_{i}^{\prime}(t)$, the integral involving
$\nabla\mathcal{L}_{3}(\Theta,t)$ is obtained from straightforward
differentiation of (\ref{Ij}), and the final term is obtained from
$Z_{ij}^{\prime}(T)$.

\subsubsection{Objective Function Optimization}

This is carried out using (\ref{SAalgo}) with an appropriate diminishing step
size sequence.

\textbf{Elliptical Trajectories:} Elliptical trajectories are described by
their center coordinates, minor and major axes and orientation. Agent $j$'s
position $s_{j}(t)=[s_{j}^{x}(t),s_{j}^{y}(t)]$ follows the general parametric
equation of the ellipse:%
\begin{equation}%
\resizebox{0.89 \columnwidth}{!}{$
\begin{array}
[c]{ll}%
s_{j}^{x}(t)= & A_{j}+a_{j}\cos\rho_{j}(t)\cos\phi_{j}-b_{j}\sin\rho
_{j}(t)\sin\phi_{j}\\
s_{j}^{y}(t)= & B_{j}+a_{j}\cos\rho_{j}(t)\sin\phi_{j}+b_{j}\sin\rho
_{j}(t)\cos\phi_{j}%
\end{array}$}
\end{equation}
Here, $\Theta_{j}=[A_{j},B_{j},a_{j},b_{j},\phi_{j}]$ where $A_{j},B_{j}$ are
the coordinates of the center, $a_{j}$ and $b_{j}$ are the major and minor
axis respectively while $\phi_{j}\in\lbrack0,\pi)$ is the ellipse orientation
which is defined as the angle between the $x$ axis and the major axis of the
ellipse. The time dependent parameter $\rho_{j}(t)$ is the eccentric anomaly
of the ellipse. Since the agent is moving with constant speed of 1 on this
trajectory from \eqref{OptimU}, we have $\dot{s}_{j}^{x}(t)^{2}+\dot{s}%
_{j}^{y}(t)^{2}=1$ which gives%
\begin{equation*}
\resizebox{0.99 \columnwidth}{!}{$
\dot\rho_{j}(t)=\left[
\begin{array}
[c]{ll}
& \Big(a\sin\rho_{j}(t)\cos\phi_{j}+b_{j}\cos\rho_{j}(t)\sin\phi_{j}%
\Big)^{2}\\
& \quad+\Big(a\sin\rho_{j}(t)\sin\phi_{j}-b_{j}\cos\rho_{j}(t)\cos\phi
_{j}\Big)^{2}%
\end{array}
\right]  ^{-\frac{1}{2}}$}
\end{equation*}

In the data harvesting problem, trajectories that do not pass through the base
are inadmissible since there is no delivery of data. Therefore, we add a
constraint to force the ellipse to pass through $w_{\!_{B}}=[w_{\!_{B}}%
^{x},w_{\!_{B}}^{y}]$ where:%
\begin{equation}%
\begin{split}
w_{\!_{B}}^{x}=  &  A_{j}+a_{j}\cos\rho_{j}(t)\cos\phi_{j}-b_{j}\sin\rho
_{j}(t)\sin\phi_{j}\\
w_{\!_{B}}^{y}=  &  B_{j}+a_{j}\cos\rho_{j}(t)\sin\phi_{j}+b_{j}\sin\rho
_{j}(t)\cos\phi_{j}%
\end{split}
\end{equation}

Using the fact that $\sin^{2}\rho(t)+\cos^{2}\rho(t)=1$ we define a quadratic
constraint term added to $J(\Theta,T;\mathbf{X}(\Theta,0))$ with a
sufficiently large multiplier. This can ensure the optimal path passes through
the base location $w_{\!_{B}}$. We define $\mathcal{C}_{j}(\Theta_{j})$:
\begin{equation}
\mathcal{C}_{j}(\Theta_{j})=\big(1-f_{j}^{1}\cos^{2}\phi_{j}-f_{j}^{2}\sin
^{2}\phi_{j}-f_{j}^{3}\sin2\phi_{j}\big)^{2}%
\end{equation}
where $f_{j}^{1}=\big(\frac{w_{\!_{B}}^{x}-A_{j}}{a_{j}}\big)^{2}%
+\big(\frac{w_{\!_{B}}^{y}-B_{j}}{b_{j}}\big)^{2}$, $f_{j}^{2}=\big(\frac
{w_{\!_{B}}^{x}-A_{j}}{b_{j}}\big)^{2}+\big(\frac{w_{\!_{B}}^{y}-B_{j}}{a_{j}%
}\big)^{2}$, $f_{j}^{3}=\frac{(b_{j}^{2}-a_{j}^{2})(w_{\!_{B}}^{x}%
-A_{j})(w_{\!_{B}}^{y}-B_{j})}{a_{j}^{2}b_{j}^{2}}$.
\begin{figure*}
\begin{center}
\begin{subfigure}[TPBVP Trajectories for Case I\label{CaseIFig1}]{
\includegraphics[width=1.7in,height=1.9in]{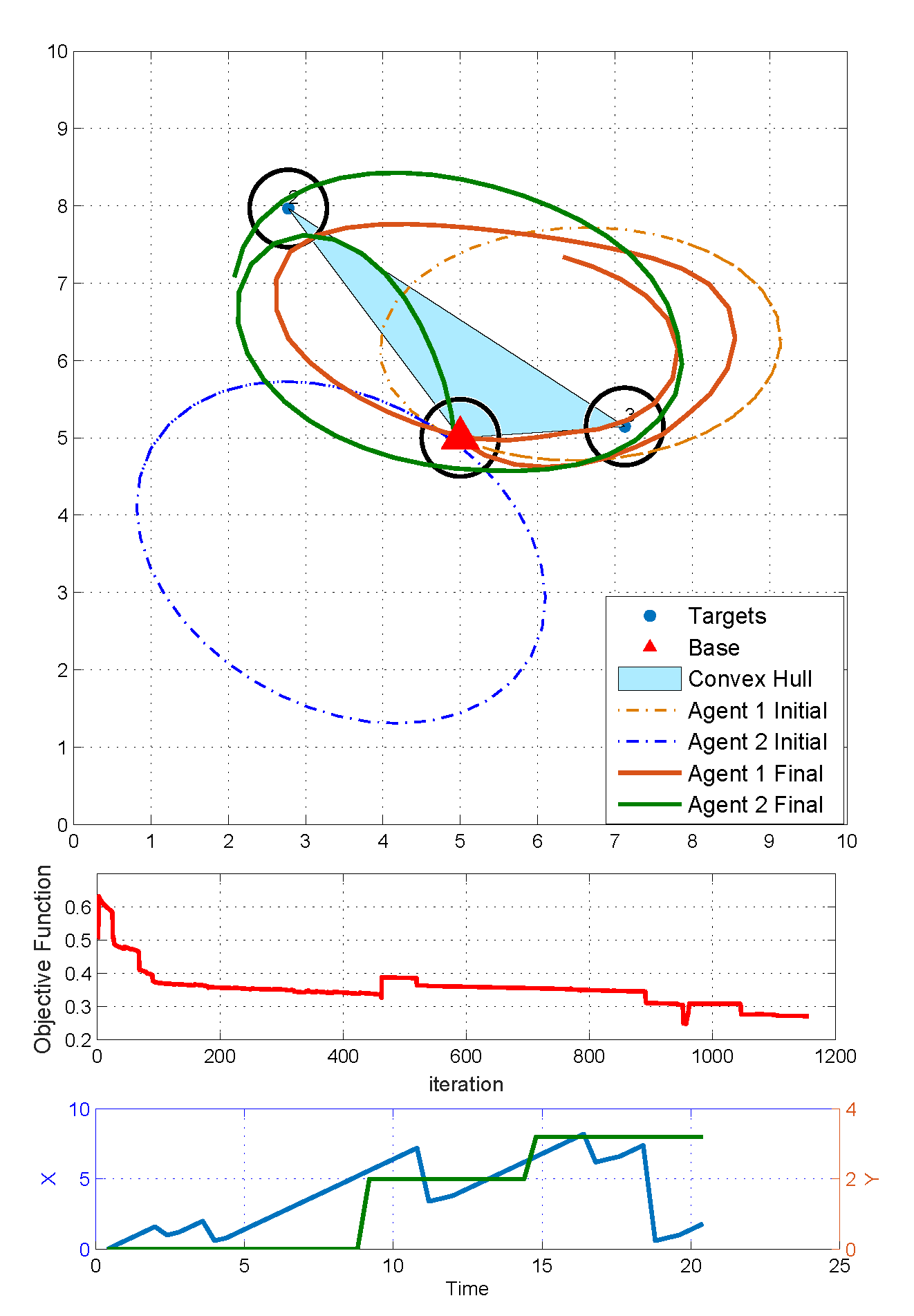}}
\end{subfigure}
\begin{subfigure}[Elliptical Trajectories for case I\label{CaseIFig2}]{
\includegraphics[width=1.7in,height=1.9in]{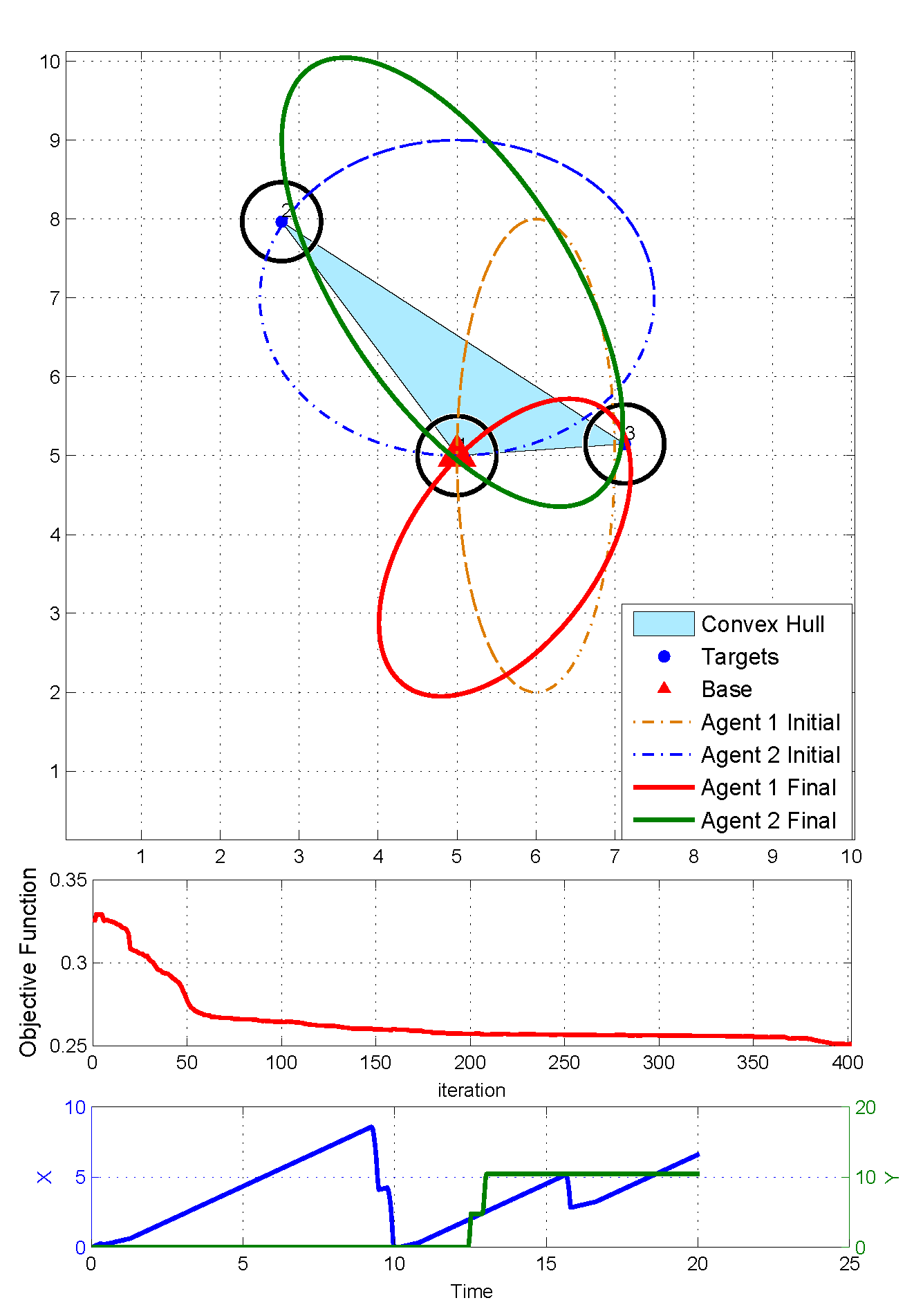}}
\end{subfigure}
\begin{subfigure}[Fourier Trajectories for case I\label{CaseIFig3}]{
\includegraphics[width=1.7in,height=1.9in]{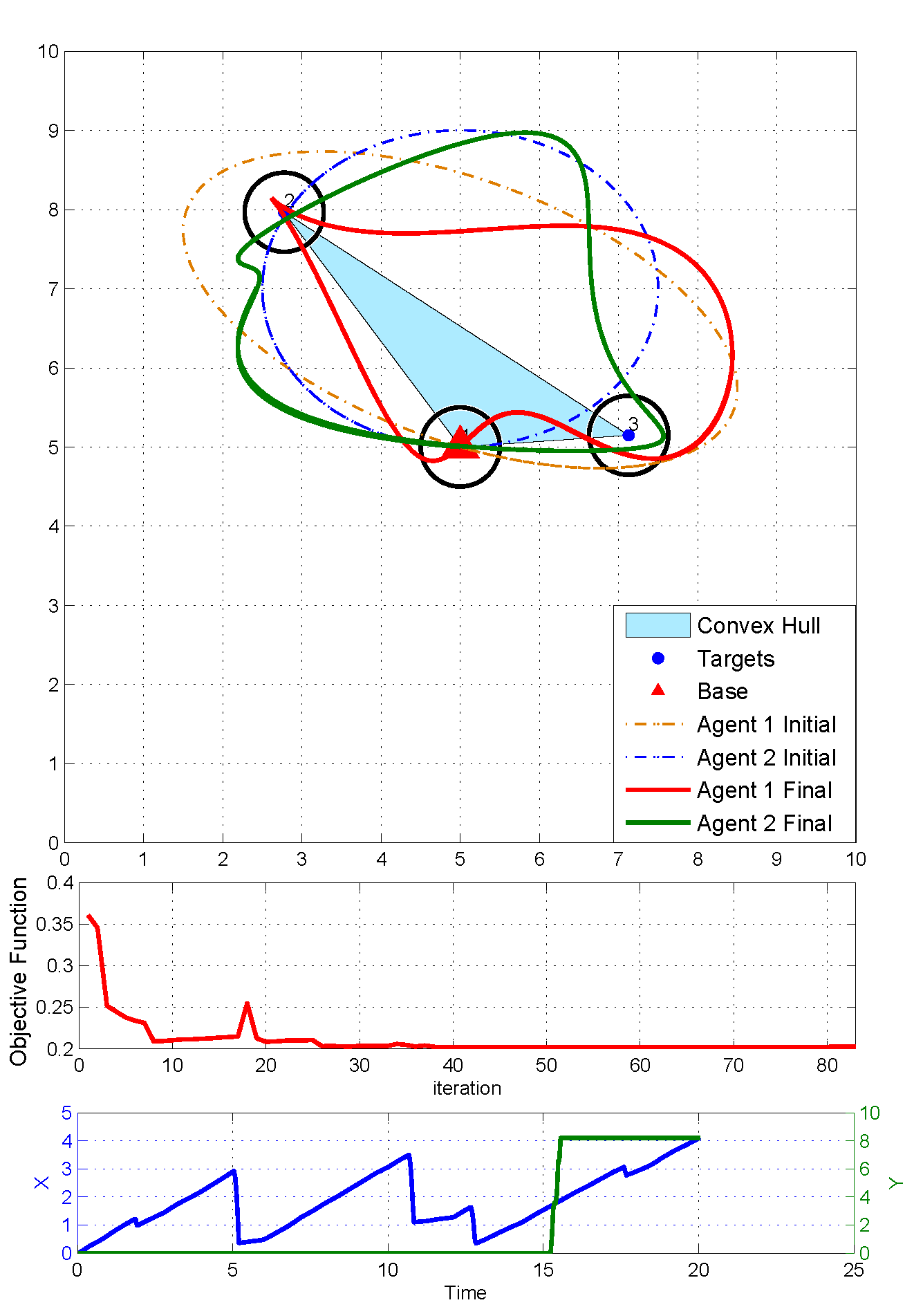}}
\end{subfigure}
\end{center}
\caption{Simulation results for the two-target and two-agent case}
\end{figure*}

Multiple visits to the base may be needed during the mission time $[0,T]$. We
can capture this by allowing an agent trajectory to consist of a sequence of
admissible ellipses. For each agent, we define $\mathcal{E}_{j}$ as the number
of ellipses in its trajectory. The parameter vector $\Theta_{j}^{\kappa}$ with
$\kappa=1,\dots,\mathcal{E}_{j}$, defines the $\kappa th$ ellipse in agent
$j$'s trajectory and $\mathcal{T}_{j}^{\kappa}$ is the time that agent $j$
completes ellipse $\kappa$. Therefore, the location of each agent is described
through $\kappa$ during $[\mathcal{T}_{j}^{\kappa-1},\mathcal{T}_{j}^{\kappa
}]$ where $\mathcal{T}_{j}^{0}=0$. Since we cannot optimize over all possible
$\mathcal{E}_{j}$ for all agents, an iterative process needs to be performed
in order to find the optimal number of segments in each agent's trajectory. At
each step, we fix $\mathcal{E}_{j}$ and find the optimal trajectory with that
many segments. The process is stopped once the optimal trajectory with
$\mathcal{E}_{j}$ segments is no better than the optimal one with
$\mathcal{E}_{j}-1$ segments (obviously, this is generally not a globally optimal
solution). We can now formulate the parametric optimization problem
$\mathbf{P2_{e}}$ where $\Theta_{j}=[\Theta_{j}^{1},\dots,\Theta
_{j}^{\mathcal{E}_{j}}]$ and $\Theta=[\Theta_{1},\dots,\Theta_{N}]$:%
\begin{equation}
\resizebox{0.89 \columnwidth}{!}{$
\begin{split}
\min\limits_{\Theta\in F_{\Theta}}J_{e}=  &  \frac{1}{T}\int_{0}%
^{T}\Big[qJ_{1}(\Theta,t)-(1-q)J_{2}(\Theta,t)+J_{3}(\Theta,t)\\
&  +J_{4}(\Theta,t)\Big]dt+M_{C}\sum\limits_{j=1}^{N}\mathcal{C}_{j}%
(\Theta_{j})+J_{f}(\Theta,T)
\end{split}$}
\label{ParamOptim_e}%
\end{equation}
where $M_{C}$ is a large multiplier. The evaluation of $\nabla\mathcal{C}_{j}$
is straightforward and does not depend on any event (details are shown in
Appendix \ref{AppEllipse}).

\textbf{Fourier Series Trajectories:} The elliptical trajectories are limited
in shape and may not be able to cover many targets in a mission space. Thus,
we next parameterize the trajectories using a Fourier series representation of
closed curves \cite{Zahn1972}. Using a Fourier series function for $f$ and $g$
in \eqref{param_traj}, agent $j$'s trajectory can be described as follows with
base frequencies $f_{j}^{x}$ and $f_{j}^{y}$:
\begin{equation}
\resizebox{0.89 \columnwidth}{!}{$
\begin{array}
[c]{ll}%
s_{j}^{x}(t)= & a_{0,j}+\sum_{n=1}^{\Gamma_{j}^{x}}a_{n,j}%
\sin(2\pi nf_{j}^{x}\rho_{j}(t)+\phi_{n,j}^{x})\\
s_{j}^{y}(t)= & b_{0,j}+\sum_{n=1}^{\Gamma_{j}^{y}}b_{n,j}%
\sin(2\pi nf_{j}^{y}\rho_{j}(t)+\phi_{n,j}^{y})
\end{array}$}
\label{fouriertraj}%
\end{equation}
The parameter $\rho(t)\in\lbrack0,2\pi]$, similar to elliptical trajectories,
represents the position of the agent along the trajectory. In this case,
forcing a Fourier series curve to pass through the base is easier. For
simplicity, we assume a trajectory to start at the base and set $s_{j}%
^{x}(0)=w_{\!_{B}}^{x}$, $s_{j}^{y}(0)=w_{\!_{B}}^{y}$. Assuming $\rho(0)=0$,
with no loss of generality, we can calculate the zero frequency terms by means
of the remaining parameters:
\begin{equation*}
a_{0,j}=w_{\!_{B}}^{x}-\displaystyle\sum_{n=1}^{\Gamma_{j}^{x}}a_{n,j}%
\sin(\phi_{n,j}^{x}),b_{0,j}=w_{\!_{B}}^{y}-\displaystyle\sum_{n=1}%
^{\Gamma_{j}^{y}}b_{n,j}\sin(\phi_{n,j}^{y}) \label{fourier_baseconst}%
\end{equation*}
The parameter vector for agent $j$ is $\Theta_{j}=[f_{j}^{x},a_{0,j}%
,\ldots,a_{\Gamma_{j}^{x}},b_{0,j},\ldots,b_{\Gamma_{j}^{y}},\phi_{1,j}%
,\ldots,\phi_{\Gamma_{j}^{x}},\xi_{1,j},\ldots,\xi_{\Gamma_{j}^{y}}]$ and
$\Theta=[\Theta_{1},\ldots,\Theta_{N}]$. Note that the shape of the curve is
fully captured by the ratio $f_{j}^{x}/f_{j}^{y}$, so that one of these two
parameters can be kept constant. For the Fourier trajectories, the fact that
$\mathbf{u}_{j}^{\ast}=1$ allows us to calculate $\dot{\rho}_{j}(t)$ as
follows:
\begin{equation*}
\resizebox{0.99 \columnwidth}{!}{$
\dot{\rho}_{j}(t)=\frac{1}{2\pi}\left[
\begin{array}
[c]{ll}
& \Bigg(f_{j}^{x}\displaystyle\sum_{n=1}^{\Gamma_{j}^{x}}a_{n,j}n\cos(2\pi
f_{j}^{x}\rho_{j}(t)+\phi_{n,j}^{x})\Bigg)^{2}\\
& \quad+\Bigg(f_{j}^{y}\displaystyle\sum_{n=1}^{\Gamma_{j}^{x}}b_{n,j}%
n\cos(2\pi f_{j}^{y}\rho_{j}(t)+\phi_{n,j}^{y})\Bigg)^{2}%
\end{array}
\right]  ^{-1/2}$}
\end{equation*}
Problem $\mathbf{P2_{f}}$ is the same as $\mathbf{P2}$ but there are no
additional constraints in this case:
\begin{equation}
\resizebox{0.89 \columnwidth}{!}{$
\begin{split}
\min\limits_{\Theta\in F_{\Theta}}J_{f}=  &  \frac{1}{T}\int_{0}%
^{T}\Big[qJ_{1}(\Theta,t)-(1-q)J_{2}(\Theta,t)+J_{3}(\Theta,t)\\
&  +J_{4}(\Theta,t)\Big]dt+J_{f}(T)
\end{split}$}
\label{ParamOptim_f}%
\end{equation}

\section{Numerical Results}

\label{Numerical} In this section numerical results are presented to
illustrate our approach. The mission space $S$ is considered to be
$[0,10]\times\lbrack0,10]$ in all cases. The first case we consider is a small
mission to obtain the TPBVP results and confirm the fact that it is not
scalable to bigger problems.

In Case I we consider a two-target, two-agent setting. We assume deterministic
arrival processes with $\sigma_{i}=0.5$ for all $i$. For \eqref{Pij} and
\eqref{PB} we have used $p(w,v)=\max(0,1-\frac{d(w,v)}{r})$ where $r$ is the
corresponding value of $r_{ij}$ or $r_{\!_{Bj}}$. We set $\mu_{ij}=100$ and
$\beta_{ij}=500$ for all $i$ and $j$. Other parameters used are $q=0.5$,
$r_{ij}=r_{\!_{Bj}}=0.5$ and $T=20$. The trajectory comparison from TPBVP,
Elliptical and Fourier parametric solutions is shown in Figs. \ref{CaseIFig1},
\ref{CaseIFig2}, \ref{CaseIFig3}. In each figure, the trajectories are shown
in the top part, while the actual objective function convergence behavior is
hown in the middle graph. The lower graph shows the total amount of data at
targets at any time (in blue) and the total amount of data at the base (in green).

In the TPBVP results, the main limitation is the number of the time steps in
the discretization of the interval $[0,T]$, since the number of control values
grows with it. To bring this into prospective, for this sample problem with
$T=20$ we considered $300$ time steps, i.e., $300$ values for the heading of
each agent need to be calculated, which brings the total number of controls to
$600$. In contrast, for the same problem the total number of controls for the
elliptical trajectories are $10$ parameters and for the Fourier trajectories
it is $28$. This explains why the TPBVP cannot be a viable solution for larger
values of $T$. Note that, in this scenario the total time is only 20 time
steps so we can obtain a TPBVP solution. This, however, causes a poor
representation for the parameterized trajectories which are spending
significant time outside the convex hull since they have not converged after
only 20 time steps.

In Table \ref{CaseITable}, the actual values for $J^{\ast}$, $J_{1}^{\ast}$,
$J_{2}^{\ast}$ are shown for the three different trajectories of Fig.
\ref{CaseIFig1}.,\ref{CaseIFig2},\ref{CaseIFig3}. Note that the objective is
to minimize $J$ by minimizing $J_{1}-J_{2}$.
\begin{figure}
\begin{subfigure}[Elliptical Trajectories for Case II\label{CaseIIElliptical1}]{
\includegraphics[width=1.5in]{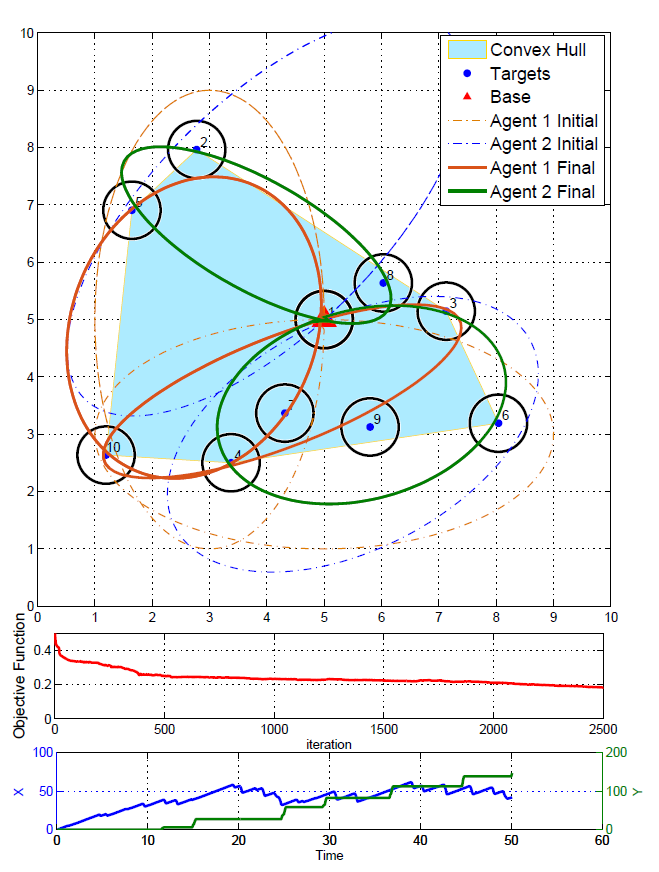}}
\end{subfigure}
\begin{subfigure}[Fourier Trajectories for Case II\label{CaseIIFourier1}]{
\includegraphics[width=1.5in]{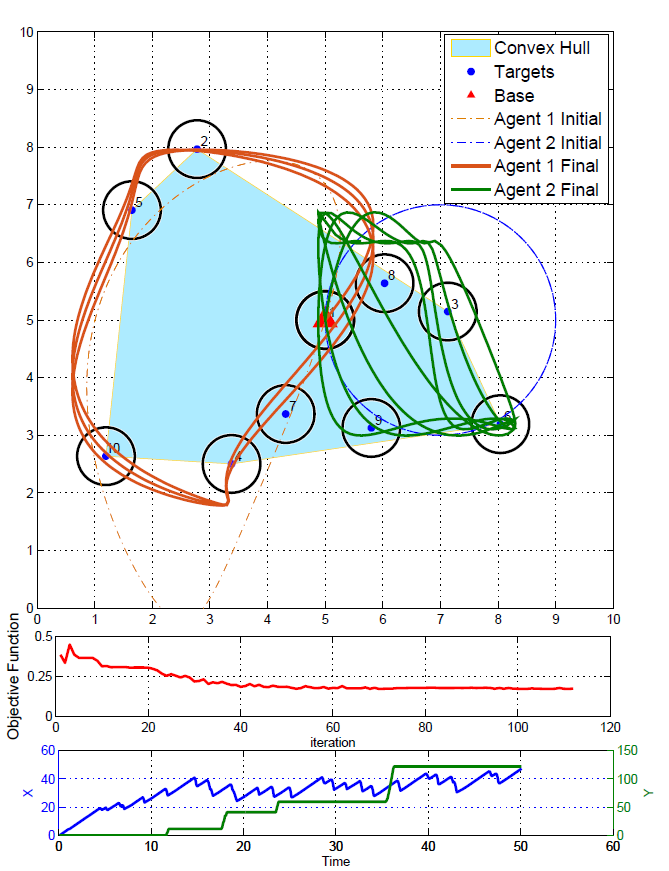}}
\end{subfigure}
\caption{Simulation results for 9-target and two-agent case}
\end{figure}

Next, in Case II we consider 9 targets and 2 agents. The base is located at
the center of the mission space. We have $\sigma_{i}(t)=0.5$, $\mu_{ij}=50$
and $\beta_{ij}=500$ for all $i$ and $j$. Other parameters used are $q=0.5$,
$r_{ij}=0.55,~r_{\!_{Bj}}=0.65$ and $T=50$. In Fig. \ref{CaseIIElliptical1}
the solution with two ellipses in each agent's trajectory is shown. As can be
seen the trajectory correctly finds all the target locations and empties the
target queues periodically. Fig. \ref{CaseIIFourier1} shows the Fourier
trajectories. The two graphs on the bottom show the
objective function value and instantaneous total content at targets and base.
The results for Case II are summarized in Table \ref{CaseIITable}.
\begin{table}
\caption{Result Comparison for Case I}%
\label{CaseITable}
\centering
{\normalsize
\begin{tabular}
[c]{|c|c|c|c|}\hline
\textbf{Method} & $J^{*}$ & $J_{1}^{*}$ & $-J_{2}^{*}$\\\hline
\emph{TPBVP} & 0.272 & 0.098 & -0.038\\\hline
\emph{Elliptical} & 0.255 & 0.092 & -0.095\\\hline
\emph{Fourier} & 0.202 & 0.089 & -0.095\\\hline
\end{tabular}
}\end{table}
\begin{table}
\caption{Results Comparison for Case II}%
\label{CaseIITable}
\centering
{\normalsize \
\begin{tabular}
[c]{|c|c|c|c|}\hline
\textbf{Method} & $J^{*}$ & $J_{1}^{*}$ & $-J_{2}^{*}$\\\hline
\emph{Elliptical} & 0.19 & 0.090 & -0.124\\\hline
\emph{Fourier} & 0.18 & 0.069 & -0.117\\\hline
\end{tabular}
}\end{table}
\begin{table}
\caption{Results Comparison for Case III}%
\label{CaseIIITable}
\centering
{\normalsize \
\begin{tabular}
[c]{|c|c|c|c|}\hline
\textbf{Method} & $J^{*}$ & $J_{1}^{*}$ & $-J_{2}^{*}$\\\hline
\emph{Elliptical} & 0.35 & 0.12 & -0.09\\\hline
\emph{Fourier} & 0.23 & 0.09 & -0.1\\\hline
\emph{Fourier (Stochastic Arrival)} & 0.23 & 0.13 & -0.13\\\hline
\end{tabular}
}\end{table}

Case III has 12 targets that are uniformly distributed in the
mission space. Here, we try to examine the robustness of our approach with
respect to the arrival rate process at targets. We use the same parameters as
in case II and solve the problem for deterministic $\sigma_{i}(t)=0.5$ using
the elliptical trajectories and Fourier trajectories. The same mission is simulated assuming that ${\sigma_{i}(t)}$ is a stochastic process with
piecewise linear arrival rate. The average arrival rate is kept at $0.5$. The
results in Figs. \ref{CaseIIIFig1},\ref{CaseIIIFig2},\ref{CaseIIIFig3} and
Table \ref{CaseIIITable} show that the Fourier parametric trajectories achieve
almost the same performance by the optimization algorithm in
the stochastic setting.
\begin{figure*}
\begin{center}
\begin{subfigure}[Elliptical Trajectories for case III\label{CaseIIIFig1}]{
\includegraphics[width=1.7in,height=1.9in]{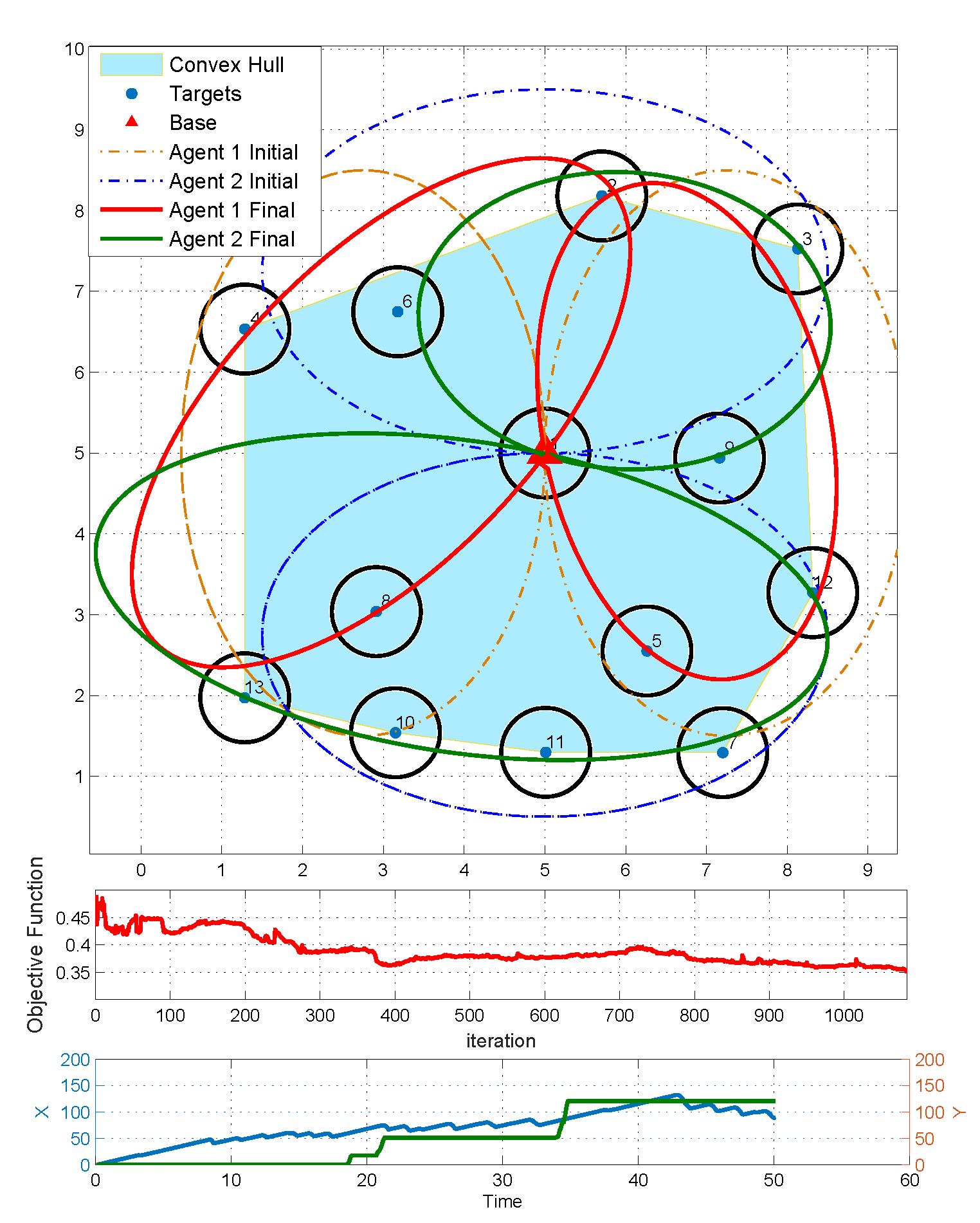}}
\end{subfigure}
\begin{subfigure}[Fourier Trajectories for case III\label{CaseIIIFig2}]{
\includegraphics[width=1.7in,height=1.9in]{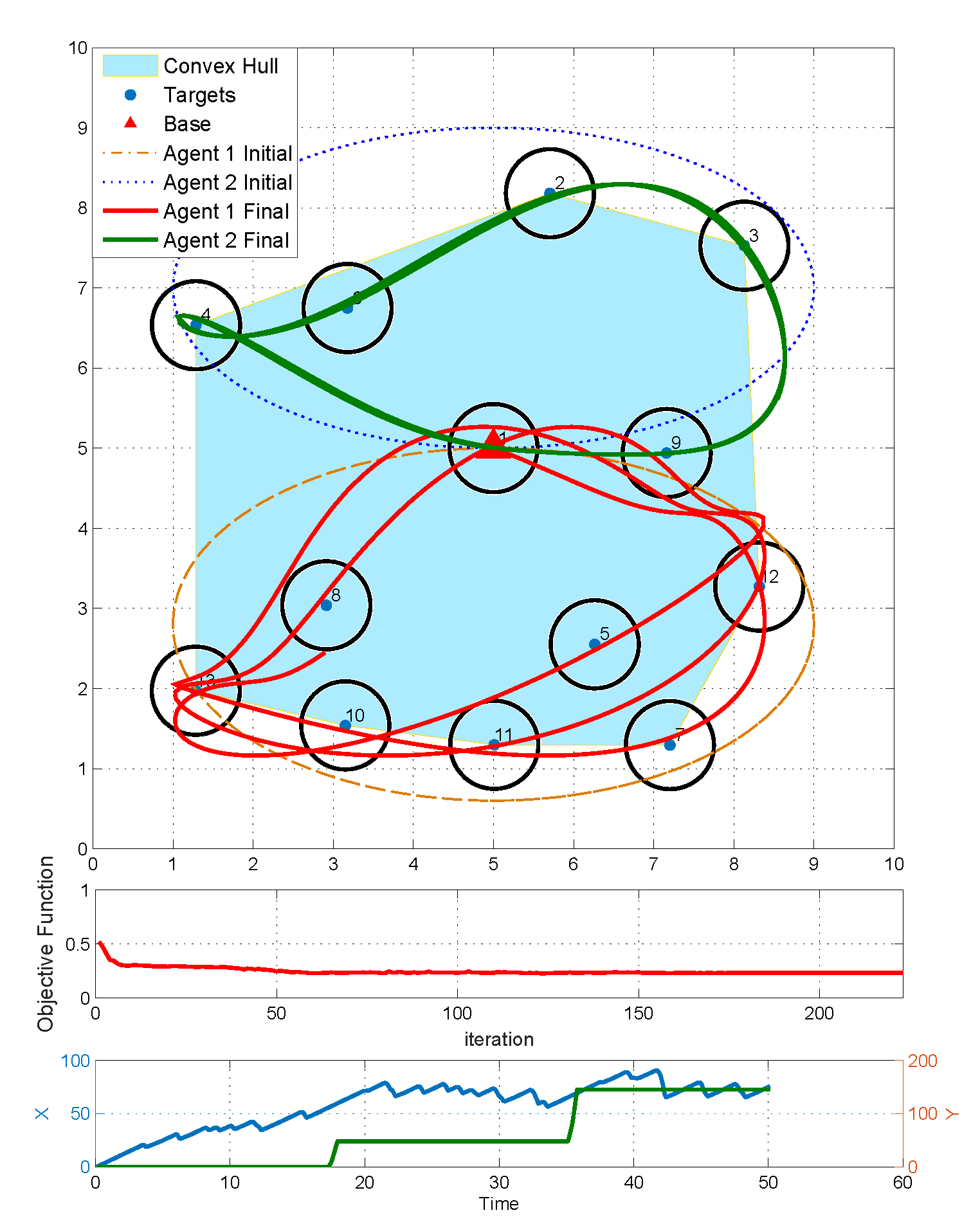}}
\end{subfigure}
\begin{subfigure}[Fourier Trajectories for case III with Stochastic Arrival\label{CaseIIIFig3}]{
\includegraphics[width=1.7in,height=1.9in]{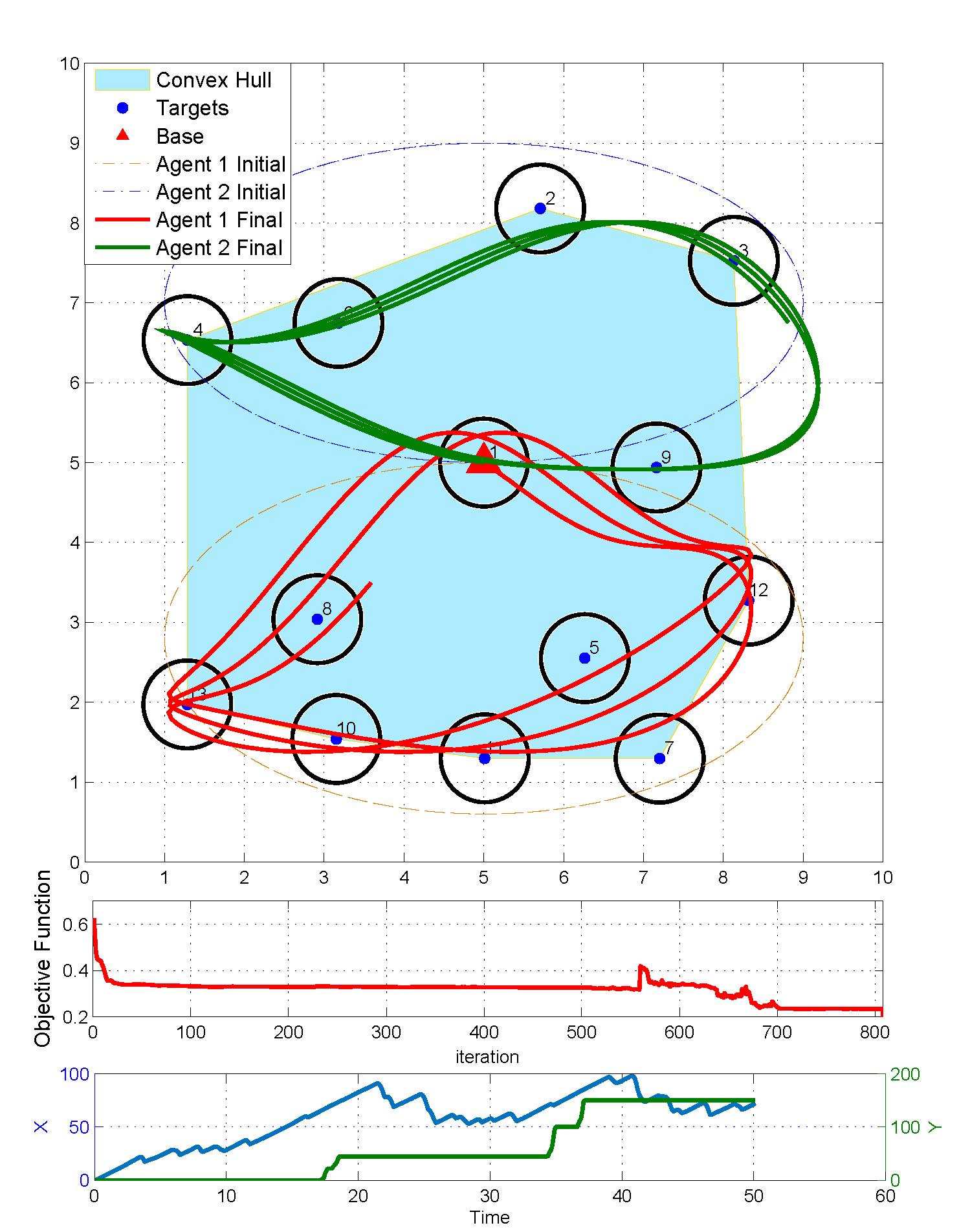}}
\end{subfigure}
\end{center}
\caption{Simulation results for the 12-target and two-agent case}
\end{figure*}
\section{Comparison with a Graph Based Algorithm}

We begin with the observation that the final parametric trajectories provide a
sequence of targets visits, similar to the functionality of a tour selection
algorithm that uses the underlying graph topology of the mission space to
determine such sequences.

We have compared the results of our approach with a graph topology algorithm
called Path Splitter Heuristic (PSH) developed in \cite{Moazzez-Estanjini2012}%
. This algorithm starts with the best Hamiltonian sequence and then uses a
heuristic method to divide the Hamiltonian tour into several sub-tours that go
through a few targets and then return to the base. The algorithm then provides
a sequence of these sub-tours for each agent. We compare the sequences from
Case I and Case II in both elliptical and Fourier trajectories and results are
shown in Tables \ref{CaseIIPSH} and \ref{CaseIIIPSH}. For a fair comparison, we
adopt each sequence and apply it with the system dynamics in our model, i.e.,
an agent can collect the data once within range of a target and the data
collection does not happen instantaneously. This, however, is not the basic
modeling assumption used in PSH, where agents pick up all the data at the
target instantaneously once at the target location. We compare the sum total
of data at targets and the base for $T=200$. A larger value of $T$ is used for
this comparison in order to approximate infinite time results. These sequences
are shown in Figs. \ref{CaseIIISeqFig1}, \ref{CaseIIISeqFig2} and
\ref{CaseIIISeqFig3} where each color represents one agent trajectory. We can
see from these comparisons that in the graph-based approach targets are
completely divided between agents. This generates a spatial partitioning,
giving each agent full responsibility for a set of targets. However, in the
trajectory planning results, in most cases, we see a temporal partitioning
where agents can visit the same targets but at different times of the mission.
This clearly allows for more robustness with respect to potential agent
failures or changes in an agent's operation. Moreover, even though the
computational complexity of the PSH algorithm and the parametric trajectory
optimization approach are comparable, methods such as PSH need to re-solve the
complete problem each time a new target may appear in the mission space. In
contrast, the on-line event-driven parametric optimization process is a
methodology designed to adapt to targets which may randomly appear in
the mission space. These results are not necessarily aiming to
prove performance enhancement but to put the two approaches
into contrast in terms of suitability for online and offline applications. Also, PSH agent trajectories consist of straight line segments. These are not physically realizable, given limitations on the motion of agents which must smoothly turn direction from one target to the next. Whereas the parametrical trajectories are easier to realize by most of the agents given these motion limitations.
\begin{figure*}
\begin{center}
\begin{subfigure}[PSH Sequences for case III\label{CaseIIISeqFig1}
]{
\includegraphics[width=1.7in]{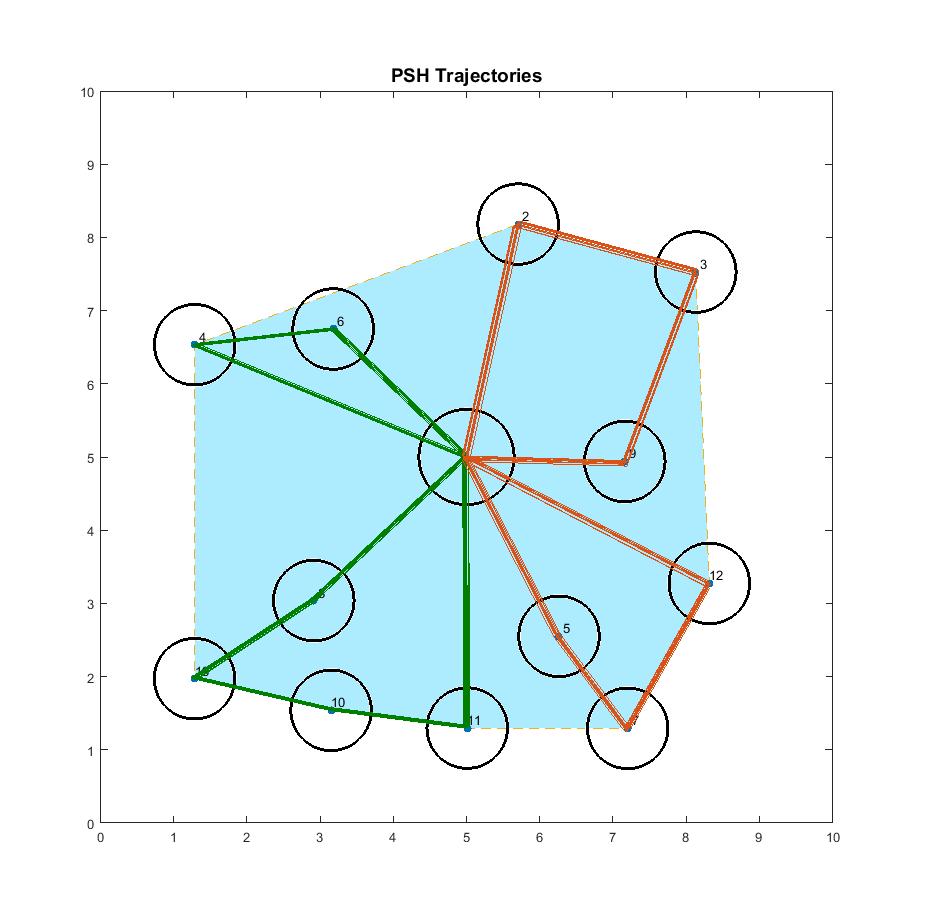}}
\end{subfigure}
\begin{subfigure}[Elliptical Sequences for case III\label{CaseIIISeqFig2}
]{
\includegraphics[width=1.75in]{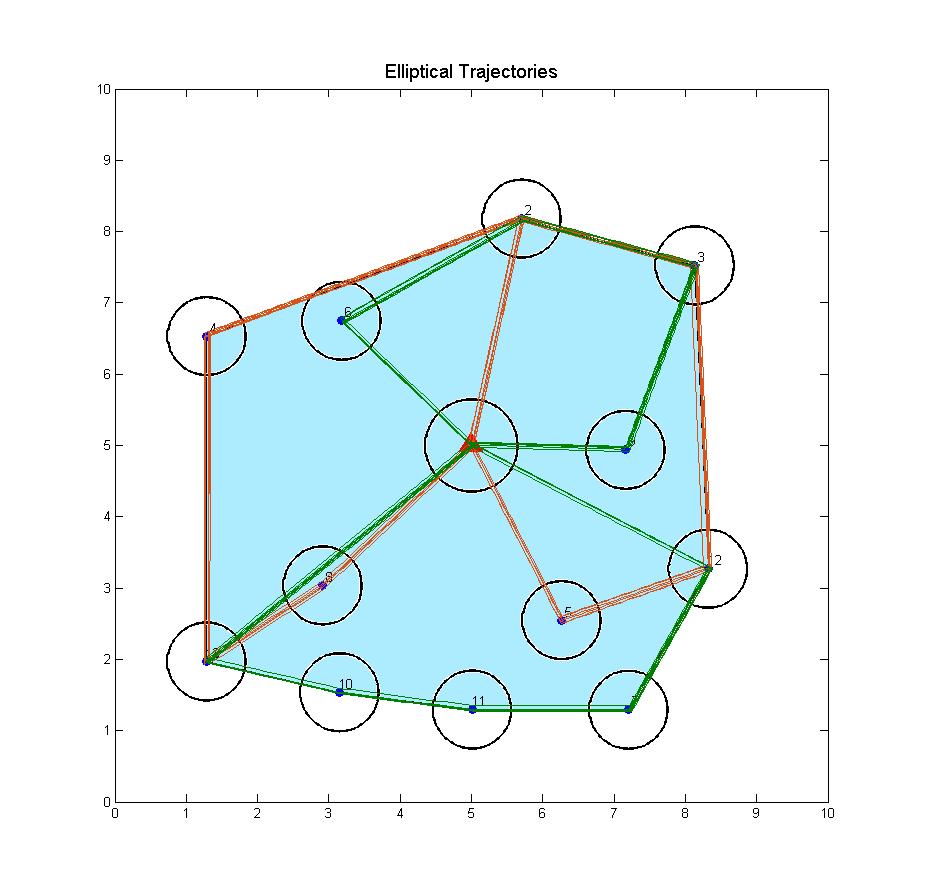}}
\end{subfigure}
\begin{subfigure}[Fourier Sequences for case III\label{CaseIIISeqFig3}
]{
\includegraphics[width=1.7in]{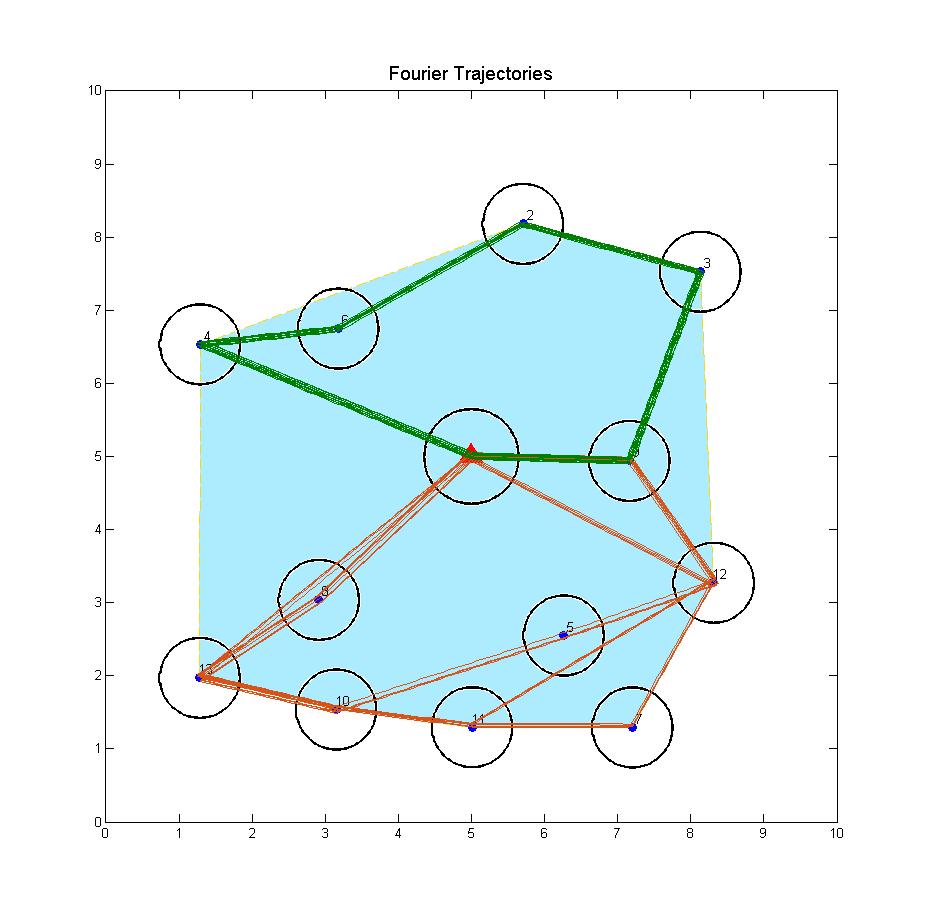}}
\end{subfigure}
\caption{Sequence Comparison}
\end{center}
\end{figure*}
\begin{table}
\caption{Result Comparison with PSH for Case II}%
\label{CaseIIPSH}
\centering
{\normalsize \
\begin{tabular}
[c]{|c|c|c|}\hline
\textbf{Method} & $J_{1}^{*}$ & $-J_{2}^{*}$\\\hline
\emph{PSH Sequence} & 0.023 & -0.22\\\hline
\emph{Elliptical Sequence} & 0.027 & -0.21\\\hline
\emph{Fourier Sequence} & 0.024 & -0.21\\\hline
\end{tabular}
}\end{table}

\begin{table}
\caption{Result Comparison with PSH for Case III}%
\label{CaseIIIPSH}
\centering
{\normalsize \
\begin{tabular}
[c]{|c|c|c|}\hline
\textbf{Method} & $J_{1}^{*}$ & $-J_{2}^{*}$\\\hline
\emph{PSH Sequence} & 0.0257 & -0.21\\\hline
\emph{Elliptical Sequence} & 0.0304 & -0.199\\\hline
\emph{Fourier Sequence} & 0.0212 & -0.21\\\hline
\end{tabular}
}\end{table}

\section{Conclusions}

\label{Conclusion} We have demonstrated a new event-driven methodology for
on-line trajectory optimization with application in the data harvesting
problem. We proposed a new performance measure that addresses the event
excitation problem in event-driven controllers. The proposed optimal control
problem is then formulated as a parametric trajectory optimization utilizing
general function families which can be subsequently optimized on line through
the use of Infinitesimal Perturbation Analysis (IPA). Several numerical
results are provided for the case of elliptical and Fourier series
trajectories and some properties of the solution are identified, including
robustness with respect to the stochastic data generation processes and
scalability in the size of the event set characterizing the underlying hybrid
dynamic system.

Although the proposed methodology is focused on applying the event-driven
optimization approach to the data harvesting problem, it should be noted that
the new metric which was introduced in \cite{Khazaeni2016} to ensure event excitation allows us to
generalize the methodology to other applications as well. The new metric
introduces a potential field or density map over the entire mission space.
This can viewed as a probability distribution of potential targets in problems
where the exact locations of targets are unknown. Used as a prior
distribution, it can be improved while the agents move within the mission
space and gather more information. In addition, this density can be
dynamically changing if the targets are moving and their location changes with
time assuming some prior information about the target paths. This provides a
tool to apply agent trajectory optimization in a much broader range of
problems tracking moving points of interest.

\begin{appendices}
\section{Proofs}
\label{appendixproofs}
Proposition \ref{Rproof}:

\begin{proof}
 Fixing $X_{i}(t)=x_{i}(t)$, from (\ref{J2R}) we have%
\begin{align*}
\int_{\mathcal{C}}R(w,t)  &  =\int_{\mathcal{C}}\sum_{i=1}^{M}\frac{\alpha
_{i}x_{i}(t)}{d_{i}^{+}(w)}dw  =\sum_{i=1}^{M}\alpha_{i}\int_{\mathcal{C}}\frac{x_{i}(t)}{d_{i}^{+}(w)}dw
\end{align*}
To evaluate $\int_{\mathcal{C}}\frac{x_{i}(t)}{d_{i}^{+}(w)}$ for each target
$i$, we first look at the case of a single target in a $2D$ space and
temporarily replace $\mathcal{C}$ by a disk with radius $\Lambda$ around the
target (black circle with radius $\Lambda$ in Fig. \ref{threetarget}). We can
now calculate the integral above using polar coordinates:
\begin{align*}
&  \int_{\mathcal{C}}\frac{x_{i}(t)}{d_{i}^{+}(w)}dw=\int_{0}^{2\pi}\int
_{0}^{\Lambda}\frac{x_{i}(t)}{\max(r_{i},r)}drd\theta\\
&  =\int_{0}^{2\pi}\int_{0}^{r_{i}}\frac{x_{i}(t)}{r_{i}}drd\theta+\int
_{0}^{2\pi}\int_{r_{i}}^{\Lambda}\frac{x_{i}(t)}{r}drd\theta\\
&  =x_{i}(t)\big[2\pi\big(1+\log(\frac{\Lambda}{r_{i}})\big)\big]
\end{align*}
\begin{figure}[ptb]
\begin{center}
\includegraphics[width=1.7in]{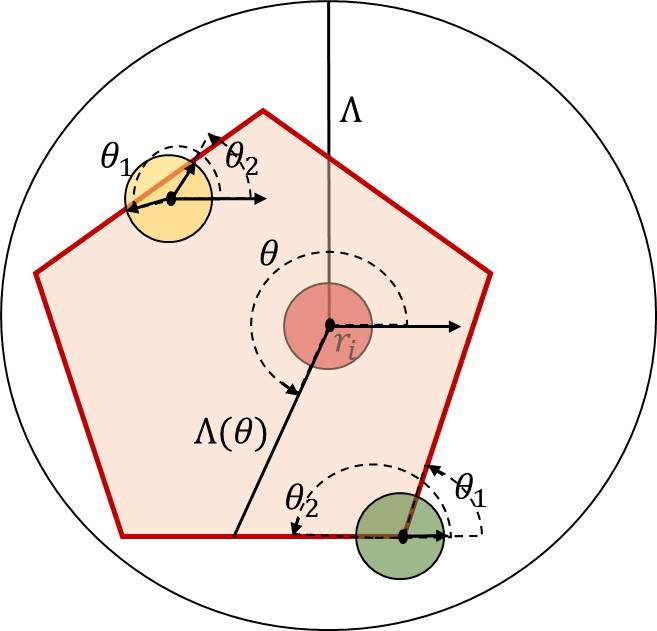}\newline
\end{center}
\caption{One Target $R(w,t)$ Calculation}%
\label{threetarget}%
\end{figure}Since $\mathcal{C}$ is actually the convex hull of all targets, we
will use the same idea to calculate $\int_{\mathcal{C}}\frac{x_{i}(t)}%
{d_{i}^{+}(w)}dw$ where $\mathcal{C}$ is the convex hull by considering three
separate cases depending on the target location.

\emph{1. Target $i$ and $C(w_{i})$ are completely in the interior of
$\mathcal{C}$:} This is shown in Fig. \ref{threetarget} for the red target.
Using the same polar coordinate for each $\theta$, we define $\Lambda(\theta)$
to be the distance of the target to the edge of $\mathcal{C}$ in the direction
of $\theta$ and get:
\begin{align}
&  \int_{\mathcal{C}}\frac{x_{i}(t)}{d_{i}^{+}(w)}dw=\int_{0}^{2\pi}\int
_{0}^{\Lambda(\theta)}\frac{x_{i}(t)}{d_{i}^{+}(r,\theta)}drd\theta\nonumber\\
&  =\int_{0}^{2\pi}\int_{0}^{r_{i}}\frac{x_{i}(t)}{r_{i}}drd\theta+\int
_{0}^{2\pi}\int_{r_{i}}^{\Lambda(\theta)}\frac{x_{i}(t)}{r}drd\theta
\nonumber\\
&  =x_{i}(t)\big[2\pi+\int_{0}^{2\pi}\log(\frac{\Lambda(\theta)}{r_{i}%
})d\theta\big] \label{Rcalcintegral}%
\end{align}
The second part in \eqref{Rcalcintegral} is calculated knowing that
$\Lambda(\theta)\geq r_{i}$. This means $\log(\frac{\Lambda(\theta)}{r_{i}%
})>0$ and the term in brackets is positive. We can then define $c_{i}$ in
\eqref{corequation} as
\begin{equation}
c_{i}=\alpha_{i}\big[2\pi+\int_{0}^{2\pi}\log(\frac{\Lambda(\theta)}{r_{i}%
})d\theta\big]
\end{equation}
\emph{2. Target $i$ is on an edge of $\mathcal{C}$:} This is shown in Fig.
\ref{threetarget} for the green target. Since $C(w_{i})$ is not entirely
contained within $\mathcal{C}$, we have
\begin{align}
&  \int_{\mathcal{C}}\frac{x_{i}(t)}{d_{i}^{+}(w)}dw=\int_{0}^{2\pi}\int
_{0}^{\Lambda(\theta)}\frac{x_{i}(t)}{d_{i}^{+}(r,\theta)}drd\theta\nonumber\\
&  =\int_{\theta_{1}}^{\theta_{2}}\int_{0}^{r_{i}}\frac{x_{i}(t)}{r_{i}%
}drd\theta+\int_{\theta_{1}}^{\theta_{2}}\int_{r_{i}}^{\Lambda(\theta)}%
\frac{x_{i}(t)}{r}drd\theta\nonumber\\
&  =x_{i}(t)\big[\theta_{2}-\theta_{1}+\int_{\theta_{1}}^{\theta_{2}}%
\log(\frac{\Lambda(\theta)}{r_{i}})d\theta\big] \label{Rcalcintegral1}%
\end{align}
The second part in \eqref{Rcalcintegral1} is calculated knowing that for
$\theta\in\lbrack\theta_{1}~\theta_{2}]$ we have $\Lambda(\theta)\geq r_{i}$.
Therefore, $\log(\frac{\Lambda(\theta)}{r_{i}})>0$ and by definition
$\theta_{2}-\theta_{1}>0$ so that the term in brackets is positive. We can
then define $c_{i}$ in \eqref{corequation} as
\begin{equation}
c_{i}=\alpha_{i}\big[\theta_{2}-\theta_{1}+\int_{\theta_{1}}^{\theta_{2}}%
\log(\frac{\Lambda(\theta)}{r_{i}})d\theta\big]
\end{equation}
\emph{3. Target $i$ is in the interior of $\mathcal{C}$ but $C(w_{i})$ is not
completely in the interior of $\mathcal{C}$:} This is the case of the yellow
target in Fig. \ref{threetarget}. Carrying out the integration for the
appropriate limits, since $C(w_{i})$ is not fully contained in $\mathcal{C}$:

\begin{align}
&  \int_{\mathcal{C}}\frac{x_{i}(t)}{d_{i}^{+}(w)}dw=\int_{0}^{2\pi}\int
_{0}^{\Lambda(\theta)}\frac{x_{i}(t)}{d_{i}^{+}(r,\theta)}drd\theta\nonumber\\
&  =\int_{0}^{2\pi}\int_{0}^{r(\theta)}\frac{x_{i}(t)}{r_{i}}drd\theta
+\int_{\theta_{1}}^{\theta_{2}}\int_{r_{i}}^{\Lambda(\theta)}\frac{x_{i}%
(t)}{r}drd\theta\nonumber\\
&  =x_{i}(t)\big[\int_{0}^{2\pi}\frac{r(\theta)}{r_{i}}d\theta+\int
_{\theta_{1}}^{\theta_{2}}\log(\frac{\Lambda(\theta)}{r_{i}})d\theta
\big] \label{Rcalcintegral2}%
\end{align}
Here, $\Lambda(\theta)$ is only defined for $\theta\in\lbrack\theta_{1}%
~\theta_{2}]$ as shown in Fig. \ref{threetarget} for the yellow target. Again,
since $\Lambda(\theta)\geq r_{i}$, it follows that $\log(\frac{\Lambda
(\theta)}{r_{i}})>0$ and with $r(\theta)>0$ the term in brackets above is
positive. We can define $c_{i}$ in \eqref{corequation} as
\begin{equation}
c_{i}=\alpha_{i}\big[\int_{0}^{2\pi}\frac{r(\theta)}{r_{i}}d\theta
+\int_{\theta_{1}}^{\theta_{2}}\log(\frac{\Lambda(\theta)}{r_{i}})d\theta\big]
\end{equation}
\end{proof}
\section{Elliptical Trajectories}
\label{AppEllipse} In order to calculate the IPA derivatives we need the
derivatives of state variable associated with agents with respect to all
entries in the parameter vector $\Theta_{j}=[A_{j},B_{j},a_{j},b_{j},\phi
_{j}]$ for all agents $j$. These derivatives do not depend on the events in
the system, since the agent trajectories are fixed at each iteration. For now
we assume $\mathcal{E}_{j}=1$ for all $j=1,\dots,N$ hence, we drop the
superscript. We have:
\begin{equation}
\frac{\partial s_{j}^{x}}{\partial A_{j}}=1,\qquad\frac{\partial s_{j}^{x}%
}{\partial B_{j}}=0\label{dsxdAB}%
\end{equation}%
\begin{equation}
\frac{\partial s_{j}^{x}}{\partial a_{j}}=\cos\rho_{j}(t)\cos\phi_{j}%
,\qquad\frac{\partial s_{j}^{x}}{\partial b_{j}}=-\sin\rho_{j}(t)\sin\phi
_{j}\label{dsxdab}%
\end{equation}%
\begin{equation}
\frac{\partial s_{j}^{x}}{\partial\phi_{j}}=-a_{j}\cos\rho_{j}(t)\sin\phi
_{j}-b_{j}\sin\rho_{j}(t)\cos\phi_{j}\label{dsxdphi}%
\end{equation}%
\begin{equation}
\frac{\partial s_{j}^{y}}{\partial A_{j}}=0,\qquad\frac{\partial s_{j}^{y}%
}{\partial B_{j}}=1\label{dsydAB}%
\end{equation}%
\begin{equation}
\frac{\partial s_{j}^{y}}{\partial a_{j}}=\cos\rho_{j}(t)\sin\phi_{j}%
,\qquad\frac{\partial s_{j}^{y}}{\partial b_{j}}=\sin\rho_{j}(t)\cos\phi
_{j}\label{dsydab}%
\end{equation}%
\begin{equation}
\frac{\partial s_{j}^{y}}{\partial\phi_{j}}=a_{j}\cos\rho_{j}(t)\cos\phi
_{j}-b_{j}\sin\rho_{j}(t)\sin\phi_{j}\label{dsydphi}%
\end{equation}
The time derivative of the position state variables are calculated as
follows:
\begin{equation}
\resizebox{0.89 \columnwidth}{!}{$
\dot{s}_{j}^{x}(t)=-a_{j}\dot{\rho}_{j}(t)\sin\rho_{j}(t)\cos\phi_{j}%
+b_{j}\dot{\rho}_{j}(t)\cos\rho_{j}(t)\sin\phi_{j}$}\label{dsxdt}%
\end{equation}%
\begin{equation}
\resizebox{0.89 \columnwidth}{!}{$
\dot{s}_{j}^{y}(t)=-a_{j}\dot{\rho}_{j}(t)\sin\rho_{j}(t)\sin\phi_{j}%
+b_{j}\dot{\rho}_{j}(t)\cos\rho_{j}(t)\cos\phi_{j}$}\label{dsydt}%
\end{equation}
The gradient of the last term in the $J_{e}$ in \eqref{ParamOptim_e} needs to
be calculated separately. We have for $j\neq l$, $\frac{\partial
\mathcal{C}_{j}}{\partial\Theta_{l}}=0$ and for $j=l$:
\[
\frac{\partial\mathcal{C}_{j}}{\partial A_{j}}=2\mathcal{C}_{j}\big(-\cos
^{2}\phi_{j}\frac{\partial f_{j}^{1}}{\partial A_{j}}-\sin^{2}\phi_{j}%
\frac{\partial f_{j}^{2}}{\partial A_{j}}-\sin2\phi_{j}\frac{\partial
f_{j}^{3}}{\partial A_{j}}\big)
\]%
\[
\frac{\partial\mathcal{C}_{j}}{\partial B_{j}}=2\mathcal{C}_{j}\big(-\cos
^{2}\phi_{j}\frac{\partial f_{j}^{1}}{\partial B_{j}}-\sin^{2}\phi_{j}%
\frac{\partial f_{j}^{2}}{\partial B_{j}}-\sin2\phi_{j}\frac{\partial
f_{j}^{3}}{\partial B_{j}}\big)
\]%
\[
\frac{\partial\mathcal{C}_{j}}{\partial a_{j}}=2\mathcal{C}_{j}\big(-\cos
^{2}\phi_{j}\frac{\partial f_{j}^{1}}{\partial a_{j}}-\sin^{2}\phi_{j}%
\frac{\partial f_{j}^{2}}{\partial a_{j}}-\sin2\phi_{j}\frac{\partial
f_{j}^{3}}{\partial a_{j}}\big)
\]%
\[
\frac{\partial\mathcal{C}_{j}}{\partial b_{j}}=2\mathcal{C}_{j}\big(-\cos
^{2}\phi_{j}\frac{\partial f_{j}^{1}}{\partial b_{j}}-\sin^{2}\phi_{j}%
\frac{\partial f_{j}^{2}}{\partial b_{j}}-\sin2\phi_{j}\frac{\partial
f_{j}^{3}}{\partial b_{j}}\big)
\]%
\[
\frac{\partial\mathcal{C}_{j}}{\partial\phi_{j}}=2\mathcal{C}_{j}%
\big((f_{j}^{1}-f_{j}^{2})\sin2\phi_{j}-2f_{j}^{3}\cos2\phi_{j}\big)
\]
where
\[%
\begin{split}
\frac{\partial f_{j}^{1}}{\partial A_{j}} &  =-2\Big(\frac{w_{\!_{B}}%
^{x}-A_{j}}{a_{j}^{2}}\Big),\quad\frac{\partial f_{j}^{1}}{\partial B_{j}%
}=-2\Big(\frac{w_{\!_{B}}^{y}-B_{j}}{b_{j}^{2}}\Big)\\
\frac{\partial f_{j}^{1}}{\partial a_{j}} &  =-2\Big(\frac{(w_{\!_{B}}%
^{x}-A_{j})^{2}}{a_{j}^{3}}\Big),\quad\frac{\partial f_{j}^{1}}{\partial
b_{j}}=-2\Big(\frac{(w_{\!_{B}}^{y}-B_{j})^{2}}{b_{j}^{3}}\Big)\\
&
\end{split}
\]%
\[%
\begin{split}
\frac{\partial f_{j}^{2}}{\partial A_{j}} &  =-2\Big(\frac{w_{\!_{B}}%
^{x}-A_{j}}{b_{j}^{2}}\Big),~\frac{\partial f_{j}^{2}}{\partial B_{j}%
}=-2\Big(\frac{w_{\!_{B}}^{y}-B_{j}}{a_{j}^{2}}\Big)\\
\frac{\partial f_{j}^{2}}{\partial a_{j}} &  =-2\Big(\frac{(w_{\!_{B}}%
^{y}-B_{j})^{2}}{a_{j}^{3}}\Big),~\frac{\partial f_{j}^{2}}{\partial b_{j}%
}=-2\Big(\frac{(w_{\!_{B}}^{x}-A_{j})^{2}}{b_{j}^{3}}\Big)\\
&
\end{split}
\]%
\[%
\begin{split}
\frac{\partial f_{j}^{3}}{\partial A_{j}} &  =-\Big(\frac{(b_{j}^{2}-a_{j}%
^{2})(w_{\!_{B}}^{y}-B_{j})}{a_{j}^{2}b_{j}^{2}}\Big)\\
\frac{\partial f_{j}^{3}}{\partial B_{j}} &  =-\Big(\frac{(b_{j}^{2}-a_{j}%
^{2})(w_{\!_{B}}^{x}-A_{j})}{a_{j}^{2}b_{j}^{2}}\Big)
\end{split}
\]%
\[%
\begin{split}
\frac{\partial f_{j}^{3}}{\partial a_{j}} &  =-2\Big(\frac{(w_{\!_{B}}%
^{x}-A_{j})(w_{\!_{B}}^{y}-B_{j})}{a_{j}^{3}}\Big)\\
\frac{\partial f_{j}^{3}}{\partial b_{j}} &  =2\Big(\frac{(w_{\!_{B}}%
^{x}-A_{j})(w_{\!_{B}}^{y}-B_{j})}{b_{j}^{3}}\Big)
\end{split}
\]
\section{Fourier Series Trajectories}
In the Fourier parametric trajectories the agent state derivative is
calculated as follows. The parameter vector is $\Theta_{j}=[f_{j}^{x}%
,a_{0,j},\ldots,a_{\Gamma_{j}^{x}},b_{0,j},\ldots,b_{\Gamma_{j}^{y}}%
,\phi_{1,j},\ldots,\phi_{\Gamma_{j}^{x}},\xi_{1,j},\ldots,\xi_{\Gamma_{j}^{y}%
}]$. Thus, we have:
\begin{equation}
\frac{\partial s_{j}^{x}}{\partial a_{0,j}}=1,\qquad\frac{\partial s_{j}^{x}%
}{\partial b_{0,j}}=0
\end{equation}%
\begin{equation}
\frac{\partial s_{j}^{x}}{\partial a_{n,j}}=\sin(2\pi nf_{j}^{x}\rho
_{j}(t)+\phi_{n,j}^{x}),\qquad\frac{\partial s_{j}^{x}}{\partial b_{n,j}}=0
\end{equation}%
\begin{equation}
\frac{\partial s_{j}^{x}}{\partial\phi_{n,j}^{x}}=a_{n,j}\cos(2\pi nf_{j}%
^{x}\rho_{j}(t)+\phi_{n,j}^{x})\qquad\frac{\partial s_{j}^{x}}{\partial
\phi_{n,j}^{y}}=0
\end{equation}%
\begin{equation}
\frac{\partial s_{j}^{x}}{\partial f_{j}^{x}}=2\pi\rho_{j}(t)\displaystyle\sum
_{n=1}^{\Gamma_{j}^{x}}a_{n,j}n\cos(2\pi nf_{j}^{x}\rho_{j}(t)+\phi_{n,j}%
^{x}),
\end{equation}%
\begin{equation}
\frac{\partial s_{j}^{y}}{\partial b_{0,j}}=1,\qquad\frac{\partial s_{j}^{y}%
}{\partial a_{0,j}}=0
\end{equation}%
\begin{equation}
\frac{\partial s_{j}^{y}}{\partial b_{n,j}}=\sin(2\pi nf_{j}^{y}\rho
_{j}(t)+\phi_{n,j}^{y}),\qquad\frac{\partial s_{j}^{y}}{\partial a_{n,j}}=0
\end{equation}%
\begin{equation}
\frac{\partial s_{j}^{y}}{\partial\phi_{n,j}^{y}}=b_{n,j}\cos(2\pi nf_{j}%
^{y}\rho_{j}(t)+\phi_{n,j}^{y})\qquad\frac{\partial s_{j}^{y}}{\partial
\phi_{n,j}^{x}}=0
\end{equation}%
\begin{equation}
\frac{\partial s_{j}^{y}}{\partial f_{j}^{x}}=0
\end{equation}
The time derivative of the position state variables are calculated as
follows:
\begin{equation}
\dot{s}_{j}^{x}(t)=\dot{\rho}_{j}(t)\sum_{n=1}^{\Gamma_{j}^{x}}2\pi nf_{j}%
^{x}a_{n,j}\cos(2\pi nf_{j}^{x}\rho_{j}(t)+\phi_{n,j}^{x}),
\end{equation}%
\begin{equation}
\dot{s}_{j}^{y}(t)=\dot{\rho}_{j}(t)\sum_{n=1}^{\Gamma_{j}^{y}}2\pi nf_{j}%
^{y}b_{n,j}\cos(2\pi nf_{j}^{y}\rho_{j}(t)+\phi_{n,j}^{y}),
\end{equation}
\section{IPA events and derivatives}
\label{IPAapp} In this section, we derive all event time derivatives and state
derivatives with respect to the controllable parameter $\Theta$ for each event
by applying the IPA equations.\newline1. \textbf{Event $\xi_{i}^{0}$}: This
event causes a transition from $X_{i}(t)>0$, $t<\tau_{k}$ to $X_{i}(t)=0$,
$t\geq\tau_{k}$. The switching function is $g_{k}(\Theta,\mathbf{X})=X_{i}$ so
$\frac{\partial g_{k}}{\partial X_{i}}=1$. From (\ref{tauprime}) and
\eqref{Xdot}:
\begin{equation}%
\begin{split}
{\tau_{k}^{\prime}}  &  =-\Big(\frac{\partial g_{k}}{\partial X_{i}}f_{k}%
(\tau_{k}^{-})\Big)^{-1}\Big(g^{\prime}_{k}+\frac{\partial g_{k}}{\partial
X_{i}}{X_{i}^{\prime}(\tau_{k}^{-})}\Big)\\
&  =-\frac{X_{i}^{\prime}(\tau_{k}^{-})}{\sigma_{i}(\tau_{k})-\mu_{ij}%
p_{ij}(\tau_{k})}%
\end{split}
\label{tauprime1}%
\end{equation}
where agent $j$ is the one connected to $i$ at $t=\tau_{k}$ and we have used
the assumption that two events occur at the same time w.p. $0$, hence
$\sigma_{i}(\tau_{k}^{-})=\sigma_{i}(\tau_{k})$. From (\ref{dXdtprime}%
)-(\ref{Xprime}), since $\dot{X}_{i}(t)=0$, for $\tau_{k}\leq t<\tau_{k+1}$:
\begin{equation}
\frac{d}{dt}X_{i}^{\prime}(t)=\frac{\partial\dot{X}_{i}(t)}{\partial X_{i}%
(t)}X_{i}^{\prime}(t)+{\dot{X}_{i}^{\prime}(t)}=0 \label{Xprimet1}%
\end{equation}%
\begin{equation}%
\begin{split}
&  X_{i}^{\prime}(\tau_{k}^{+})=X_{i}^{\prime}(\tau_{k}^{-})+\Big[\Big(\sigma
_{i}(\tau_{k})-\mu_{ij}p_{ij}(\tau_{k})\Big)-0\Big]{\tau_{k}}^{\prime}\\
&  =X_{i}^{\prime}(\tau_{k}^{-})-\frac{X_{i}^{\prime}(\tau_{k}^{-}%
)\Big(\sigma_{i}(\tau_{k})-\mu_{ij}p_{ij}(\tau_{k})\Big)}{\sigma_{i}(\tau
_{k})-\mu_{ij}p_{ij}(\tau_{k})}=0
\end{split}
\label{Xprime0}%
\end{equation}
For $X_{r}(t)$, $r\neq i$, the dynamics of $X_{r}(t)$ in \eqref{Xdot} are
unaffected and we have:
\begin{equation}
X_{r}^{\prime}(\tau_{k}^{+})=X_{r}^{\prime}(\tau_{k}^{-}) \label{Xrprime0}%
\end{equation}
If $X_{r}(\tau_{k})>0$ and agent $l$ is connected to it, then%
\begin{equation}%
\begin{split}
&  \frac{d}{dt}X_{r}^{\prime}(t)=\frac{\partial\dot{X}_{r}(t)}{\partial
X_{r}(t)}X_{r}^{\prime}(t)+{\dot{X}_{r}^{\prime}(t)}\\
&  =\sigma_{r}^{\prime}(t)-\mu_{rl}p^{\prime}_{rl}(\tau_{k})=-\mu_{rl}%
p_{rl}^{\prime}(t)
\end{split}
\label{dxdt}%
\end{equation}
and if ${X}_{r}(t)=0$ in $[\tau_{k},\tau_{k+1}]$ or if no agents are connected
to $i$, then $\frac{d}{dt}X_{r}^{\prime}(t)=0$.\newline For $Y_{r}(t)$,
$r=1,\dots,M$, the dynamics of $Y_{r}(t)$ in \eqref{Ydot} are not affected by
the event $\xi_{i}^{0}$ at $\tau_{k}$, hence
\begin{equation}
Y_{r}^{\prime}(\tau_{k}^{+})=Y_{r}^{\prime}(\tau_{k}^{-}) \label{yrplus}%
\end{equation}
and since $\dot{Y}_{r}(t)=\beta_{r}(t)$, for $\tau_{k}\leq t<\tau_{k+1}$:
\begin{equation}
\frac{d}{dt}Y_{r}^{\prime}(t)=\frac{\partial\dot{Y}_{r}(t)}{\partial Y_{r}%
(t)}Y_{r}^{\prime}(t)+{\dot{Y}_{r}^{\prime}(t)}=\beta_{r}^{\prime}(t)
\label{dydt}%
\end{equation}
For $Z_{ij}(t)$, we must have $Z_{ij}(\tau_{k})>0$ since $X_{i}(\tau_{k}%
^{-})>0$, hence $\tilde{\mu}_{ij}(\tau_{k}^{-})>0$ and from (\ref{Xprime}):
\begin{equation}%
\begin{split}
&  Z_{ij}^{\prime}(\tau_{k}^{+})=Z_{ij}^{\prime}(\tau_{k}^{-})+\Big[\dot
{Z}_{ij}(\tau_{k}^{-})-\dot{Z}_{ij}(\tau_{k}^{+})\Big]\tau_{k}^{\prime}\\
&  =Z_{ij}^{\prime}(\tau_{k}^{-})+\Big[\tilde{\mu}_{ij}(\tau_{k}^{-}%
)-\tilde{\mu}_{ij}(\tau_{k}^{+})\Big]p_{ij}(\tau_{k})\tau_{k}^{\prime}%
\end{split}
\label{Zprime}%
\end{equation}
Since $X_{i}(\tau_{k}^{-})>0$, from \eqref{mutilde} we have $\tilde{\mu}%
_{ij}(\tau_{k}^{-})=\mu_{ij}$. At $\tau_{k}^{+}$, $j$ remains connected to
target $i$ with $\tilde{\mu}_{ij}(\tau_{k}^{+})=\sigma_{i}(\tau_{k}%
^{+})/p_{ij}(\tau_{k})=\sigma_{i}(\tau_{k})/p_{ij}(\tau_{k})$ and we get%
\begin{equation}
\resizebox{0.89 \columnwidth}{!}{$
\begin{split}
\label{Zprime01}Z_{ij}^{\prime}(\tau_{k}^{+})  &  =Z_{ij}^{\prime}(\tau
_{k}^{-})+\frac{-X_{i}^{\prime}(\tau_{k}^{-})\Big[\mu_{ij}p_{ij}(\tau
_{k})-\sigma_{i}(\tau_{k})\Big]}{\sigma_{i}(\tau_{k})-\mu_{ij}p_{ij}(\tau
_{k})}\\
&  =Z_{ij}^{\prime}(\tau_{k}^{-})+X_{i}^{\prime}(\tau_{k}^{-})
\end{split}$}
\end{equation}
From (\ref{dXdtprime}) for $\tau_{k}\leq t<\tau_{k+1}$:%
\begin{equation}%
\begin{split}
&  \frac{d}{dt}Z_{ij}^{\prime}(t)=\frac{\partial\dot{Z}_{ij}(t)}{\partial
Z_{ij}(t)}Z_{ij}^{\prime}(t)+\dot{Z}^{\prime}_{ij}(t)\\
&  =\dot{Z}^{\prime}_{ij}(t)=\Big(\tilde{\mu}_{ij}(t)P^{\prime}_{ij}%
(t)-\beta_{ij}P^{\prime}_{\!_{Bj}}(t)\Big)
\end{split}
\label{dzdt}%
\end{equation}
Since $\tilde{\mu}_{ij}(t)=\sigma_{i}(t)/p_{ij}(t)$ for the agent which
remains connected to target $i$ after this event, it follows that
$\frac{\partial}{\partial\Theta}[\tilde{\mu}_{ij}(t)p_{ij}(t)]=0$. Moreover,
$p_{\!_{Bj}}(t)=0$ by our assumption that agents cannot be within range of the
base and targets at the same time and we get%
\begin{equation}
\frac{d}{dt}Z_{ij}^{\prime}(t)=0 \label{dzdt1}%
\end{equation}
Otherwise, for $r\neq j$, we have $\tilde{\mu}_{ir}(t)=0$ and we get:
\begin{equation}
\frac{d}{dt}Z_{ir}^{\prime}(t)=-\beta_{ir}p_{\!_{Br}}^{\prime}(t)
\end{equation}
Finally, for $Z_{rj}(t)$, $r\neq i$ we have $Z_{rj}^{\prime}(\tau_{k}%
^{+})=Z_{rj}^{\prime}(\tau_{k}^{-})$. If $Z_{rj}(t)=0$ in $[\tau_{k}%
,\tau_{k+1})$, then $\frac{d}{dt}Z_{rj}^{\prime}(t)=0$. Otherwise, we get
$\frac{d}{dt}Z_{rj}^{\prime}(t)$ from \eqref{dzdt} with $i$ replaced by
$r$.\newline2. \textbf{Event $\xi_{i}^{+}$}: This event causes a transition
from $X_{i}(t)=0$, $t\le\tau_{k}$ to $X_{i}(t)>0$, $t > \tau_{k}$. Note that
this transition can occur as an exogenous event when an empty queue $X_{i}(t)$
gets a new arrival in which case we simply have $\tau_{k}^{\prime}=0$ since
the exogenous event is independent of the controllable parameters. In the
endogenous case, however, we have the switching function $g_{k}(\Theta
,\mathbf{X})=\sigma_{i}(t)- \mu_{ij}p_{ij}(t)$ in which agent $j$ is connected
to target $i$ at $t=\tau_{k}$. Assuming $s_{j}^{\prime}(t)=\big[\frac{\partial
s_{j}^{x}}{\partial\Theta}~\frac{\partial s_{j}^{y}}{\partial\Theta
}\big]^{\top}$ and $\dot s_{j}=[\dot s_{j}^{x}~ \dot s_{j}^{y}]^{\top}$, from
(\ref{tauprime}):
\begin{equation}%
\begin{split}
{\tau_{k}}^{\prime}  &  =-\Big(\frac{\partial g_{k}}{\partial s_{j}}%
s_{j}^{\prime}(\tau_{k})\Big)\Big({g_{k}^{\prime}\dot s_{j}(\tau_{k}%
)}\Big)^{-1}%
\end{split}
\end{equation}
At $\tau_{k}$ we have $\sigma_{i}(\tau_{k})=\mu_{ij}p_{ij}(\tau_{k})$.
Therefore from \eqref{Xprime}:
\begin{equation}%
\begin{split}
&  X_{i}^{\prime}(\tau_{k}^{+})=X_{i}^{\prime}(\tau_{k}^{-})+[\dot X_{i}%
(\tau_{k}^{-})-\dot X_{i}(\tau_{k}^{+})]{\tau_{k}}^{\prime}\\
&  =X_{i}^{\prime}(\tau_{k}^{-})+\Big(0-\sigma_{i}(\tau_{k})+\mu_{ij}%
p_{ij}(\tau_{k})\Big){\tau_{k}}^{\prime}=X_{i}^{\prime}(\tau_{k}^{-})
\end{split}
\label{xiplus}%
\end{equation}
Having $X_{i}(t)>0$ in $[\tau_{k},\tau_{k+1})$ we know $\dot X_{i}%
(t)=\sigma_{i}(t)-\mu_{ij}p_{ij}(t)$ therefor, we can get $\frac{d}{dt}
X_{i}^{\prime}(t)$ from \eqref{dxdt} with $r$ and $l$ replaced by $i$ and $j$.
For $X_{r}(t)$, $r \ne i$, if $X_{r}(\tau_{k})>0$ and agent $l$ is connected
to $r$ then $\dot X_{r}(\tau_{k})=\sigma_{r}(\tau_{k})-\mu_{rl}p_{rl}(\tau
_{k})$, therefor, we get $X^{\prime}_{r}(\tau_{k}^{+})$ from \eqref{Xrprime0}
while in $[\tau_{k},\tau_{k+1})$ we have $\frac{d}{dt}X^{\prime}_{r}(t)$ from
\eqref{dxdt}. If $X_{r}(\tau_{k})=0$ or if no agent is connected to target
$r$, $\dot X_{r}(\tau_{k})=0$. Thus, $X^{\prime}_{r}(\tau_{k}^{+})=X^{\prime
}_{r}(\tau_{k}^{-})$ and $\frac{d}{dt}X^{\prime}_{r}(t)=0$.\newline For
$Y_{r}(t)$, $r=1,\dots,M$ the dynamics of $Y_{r}(t)$ in \eqref{Ydot} are not
affected by the event at $\tau_{k}$ hence, we can get $Y^{\prime}_{r}(\tau
_{k}^{+})$ and $\frac{d}{dt}Y^{\prime}_{r}(t)$ in $[\tau_{k},\tau_{k+1})$ from
\eqref{yrplus} and \eqref{dydt} respectively.\newline For $Z_{ij}(t)$ assuming
agent $j$ is the one connected to target $i$, we have:
\begin{equation}%
\begin{split}
&  Z^{\prime}_{ij}(\tau_{k}^{+})=Z^{\prime}_{ij}(\tau_{k}^{-})+\Big[\dot
Z_{ij}(\tau_{k}^{-})-\dot Z_{ij}(\tau_{k}^{+})\Big]\tau^{\prime}_{k}\\
&  =Z^{\prime}_{ij}(\tau_{k}^{-})+\Big[\tilde\mu_{ij}(\tau_{k}^{-})-\tilde
\mu_{ij}(\tau_{k}^{+})\Big]p_{ij}(\tau_{k})\tau^{\prime}_{k}=Z^{\prime}%
_{ij}(\tau_{k}^{-})
\end{split}
\label{zijplus}%
\end{equation}
In the above equation, $\tilde\mu_{ij}(\tau_{k}^{+})=\mu_{ij}$ because
$X_{i}(\tau_{k}^{+})>0$. Also, $\mu_{ij}p_{ij}(\tau_{k})=\sigma_{i}(\tau_{k})$
and $\tilde\mu_{ij}(\tau_{k}^{-})=\frac{\sigma_{i}(\tau_{k})}{p_{ij}(\tau
_{k})}$ results in $\tilde\mu_{ij}(\tau_{k}^{+})=\mu_{ij}$. For $Z_{il}(t)$,
$l\neq j$ , agent $l$ cannot be connected to target $i$ at $\tau_{k}$ so we
have, $Z^{\prime}_{il}(\tau_{k}^{+})=Z^{\prime}_{il}(\tau_{k}^{-})$ and
$\frac{d}{dt}Z^{\prime}_{il}(t)=0$ in $[\tau_{k},\tau_{k+1})$. For $Z_{rl}(t)$
,$r\neq i$ and $l\neq j$ using the assumption that two events occur at the
same time w.p. 0, the dynamics of $Z_{rl}(t)$ are not affected at $\tau_{k}$,
hence we get $\frac{d}{dt}Z^{\prime}_{rl}(t)$ from \eqref{dzdt} for $i$ and
$j$ replaced by $r$ and $l$.\newline3. \textbf{Event $\zeta_{ij}^{0}$}: This
event causes a transition from $Z_{ij}(t)>0$ for $t<\tau_{k}$ to $Z_{ij}(t)=0$
for $t \ge\tau_{k}$. The switching function is $g_{k}(\Theta,\mathbf{X}%
)=Z_{ij}(t)$ so $\frac{\partial g_{k}}{\partial Z_{ij}}=1$. From
\eqref{tauprime}:
\begin{equation}%
\begin{split}
&  {\tau_{k}}^{\prime}=-\Big(\frac{\partial g_{k}}{\partial Z_{ij}}f_{k}%
(\tau^{-}_{k})\Big)^{-1}\Big(g^{\prime}_{k}+\frac{\partial g_{k}}{\partial
Z_{ij}}{Z_{ij}^{\prime}(\tau^{-}_{k})}\Big)\\
&  =-\frac{Z_{ij}^{\prime}(\tau^{-}_{k})}{\tilde\mu_{ij}(\tau^{-}_{k}%
)p_{ij}(\tau^{-}_{k})-\beta_{ij}p_{\!_{Bj}}(\tau^{-}_{k})}=\frac
{Z_{ij}^{\prime}(\tau^{-}_{k})}{\beta_{ij}p_{\!_{Bj}}(\tau_{k})}%
\end{split}
\end{equation}
Since $Z_{ij}(t)$ is being emptied at $\tau_{k}$, by the assumption that
agents can not be in range with the base and targets at the same time, we have
$p_{ij}(\tau_{k})=0$. Then from \eqref{Xprime}:
\begin{equation}%
\begin{split}
&  Z^{\prime}_{ij}(\tau_{k}^{+})=Z^{\prime}_{ij}(\tau_{k}^{-})+\Big[-\beta
_{ij}p_{\!_{Bj}}(\tau_{k})-0\Big]{\tau_{k}}^{\prime}\\
&  =Z^{\prime}_{ij}(\tau_{k}^{-})-\Big[\beta_{ij}p_{\!_{Bj}}(\tau
_{k})\Big]\frac{Z_{ij}^{\prime}(\tau^{-}_{k})}{\beta_{ij}p_{\!_{Bj}}(\tau
_{k})}=0
\end{split}
\label{Zprime0}%
\end{equation}
Since $\dot Z_{ij}(t)=0$ in $[\tau_{k},\tau_{k+1})$:
\begin{equation}
\frac{d}{dt}Z^{\prime}_{ij}(t)=\frac{\partial\dot Z_{ij}(t)}{\partial
Z_{ij}(t)}Z^{\prime}_{ij}(t)+\dot Z^{\prime}_{ij}(t)=0 \label{Zprimet0}%
\end{equation}
For $Z_{rl}(t)$, $r \ne i$ or $l \ne j$, the dynamics in \eqref{Zdot} are not
affected at $\tau_{k}$, hence:
\begin{equation}
Z^{\prime}_{rl}(\tau_{k}^{+})=Z^{\prime}_{rl}(\tau_{k}^{-}) \label{Zrlplus}%
\end{equation}
if $Z_{rl}(\tau_{k})>0$, the value for $\frac{d}{dt}Z^{\prime}_{rl}(t)$ is
calculated by \eqref{dzdt} with $r$ and $l$ replacing $i$ and $j$
respectively. If $Z_{rl}(\tau_{k})=0$ then $\frac{d}{dt}Z^{\prime}_{rl}(t)=0$.
\newline For $Y_{i}(t)$ we have $\beta_{i}(\tau_{k}^{+})=0$ since the agent
has emptied its queue, hence:
\begin{equation}%
\begin{split}
&  Y^{\prime}_{i}(\tau_{k}^{+})=Y^{\prime}_{i}(\tau_{k}^{-})+\Big[\dot
Y_{i}(\tau_{k}^{-})-\dot Y_{i}(\tau_{k}^{+})\Big]\tau^{\prime}_{k}\\
&  =Y^{\prime}_{i}(\tau_{k}^{-})+[\beta_{ij}p_{\!_{Bj}}(\tau_{k}%
)-0]\frac{Z_{ij}^{\prime}(\tau^{-}_{k})}{\beta_{ij}p_{\!_{Bj}}(\tau_{k})}\\
&  =Y^{\prime}_{i}(\tau_{k}^{-})+Z_{ij}^{\prime}(\tau^{-}_{k})
\end{split}
\label{Yprime}%
\end{equation}
In $[\tau_{k},\tau_{k+1})$ we can get $\frac{d}{dt}Y^{\prime}_{i}(t)=0$. For
$Y_{r}(t)$, $r\neq i$ the dynamics of $Y_{r}(t)$ in \eqref{Ydot} are not
affected by the event at $\tau_{k}$ hence, $Y^{\prime}_{r}(\tau_{k}^{+})$ and
$\frac{d}{dt}Y^{\prime}_{r}(t)$ in $[\tau_{k},\tau_{k+1})$ are calculated from
\eqref{yrplus} and \eqref{dydt} respectively. The dynamics of $X_{r}(t)$,
$r=1,\dots,M$ is are not affected at $\tau_{k}$ since the event at $\tau_{k}$
is happening at the base. We have $X_{r}^{\prime}(\tau_{k}^{+})=X_{r}^{\prime
}(\tau_{k}^{-})$. If $X_{r}(\tau_{k})>0$ then we have $\frac{d}{dt}X^{\prime
}_{r}(t)$ from \eqref{dxdt} and if $X_{r}(\tau_{k})=0$ then $\frac{d}%
{dt}X^{\prime}_{r}(t)=0$ in $[\tau_{k},\tau_{k+1})$.\newline4. \textbf{Event
$\delta_{ij}^{+}$}: This event causes a transition from $D^{+}_{ij}(t)=0$ for
$t\le\tau_{k}$ to $D^{+}_{ij}(t)>0$ for to $t>\tau_{k}$. It is the moment that
agent $j$ leaves target $i$'s range. The switching function is $g_{k}(\Theta,
\mathbf{X})=d_{ij}(t)-r_{ij}$ , from \eqref{tauprime}:
\begin{equation}
{\tau_{k}}^{\prime}=- \frac{\partial d_{ij}}{\partial s_{j}}s_{j}^{\prime
}(t)\Big(\frac{\partial d_{ij}}{\partial s_{j}}\dot s_{j}(\tau_{k})\Big)^{-1}
\label{tauprime4}%
\end{equation}
If agent $j$ was connected to target $i$ at $\tau_{k}$ then by leaving the
target, it is possible that another agent $l$ which is within range with
target $i$ connects to that target. This means $\dot X_{i}(\tau_{k}%
^{+})=\sigma_{i}(\tau_{k})-\mu_{il}p_{il}(\tau_{k})$ and $\dot X_{i}(\tau
_{k}^{-})=\sigma_{i}(\tau_{k})-\mu_{ij}p_{ij}(\tau_{k})$, with $p_{ij}%
(\tau_{k})=0$, from \eqref{Xprime} we have
\begin{equation}
X_{i}^{\prime}(\tau_{k}^{+})= X_{i}^{\prime}(\tau_{k}^{-})-\mu_{il}p_{il}%
(\tau_{k})\tau_{k}^{\prime} \label{Xplusjump}%
\end{equation}
If $X_{i}(\tau_{k})>0$, $\frac{d}{dt}X^{\prime}_{i}(t)$ in $[\tau_{k}%
,\tau_{k+1})$ is as in \eqref{dxdt} with $r$ replaced by $i$ and if
$X_{i}(\tau_{k})=0$ then $\frac{d}{dt}X^{\prime}_{i}(t)=0$. On the other hand,
if agent $j$ was not connected to target $i$ at $\tau_{k}$, we know that some
$l\neq j$ is already connected to target $i$. This means agent $j$ leaving
target $i$ cannot affect the dynamics of $X_{i}(t)$ so we have $X_{i}^{\prime
}(\tau_{k}^{+})=X_{i}^{\prime}(\tau_{k}^{-})$ and $\frac{d}{dt}X_{i}^{\prime
}(t)$ is calculated from \eqref{dxdt} with $r$ replaced by $i$.\newline For
$X_{r}(t)$, $r \ne i$ the dynamics in \eqref{Xdot} are not affected by the
event at $\tau_{k}$ hence, we get $X_{r}^{\prime}(\tau_{k}^{+})$ from
\eqref{Xrprime0}. If $X_{r}(\tau_{k})>0$ the time derivative $\frac{d}%
{dt}X^{\prime}_{r}(t)$ in $[\tau_{k},\tau_{k+1})$ can be calculated from
\eqref{dxdt} and if $X_{r}(\tau_{k})=0$ then $\frac{d}{dt}X^{\prime}_{r}%
(t)=0$.\newline For $Y_{r}(t)$, $r=1,\ldots,,M$, the dynamics in \eqref{Ydot}
are not also affected by the event at $\tau_{k}$ hence, we get $Y_{r}(\tau
_{k}^{+})$ from \eqref{yrplus} and in $[\tau_{k},\tau_{k+1})$ the $\frac
{d}{dt}Y^{\prime}_{r}(t)$ is calculated from \eqref{dydt}.\newline For
$Z_{ij}(t)$, the dynamics in \eqref{Zdot} are not affect at $\tau_{k}$,
regardless of the fact that agent $j$ is connected to target $i$ or not. We
have $\dot Z_{ij}(\tau_{k}^{-})=\tilde\mu_{ij}(\tau_{k})p_{ij}(\tau_{k})$ with
$p_{ij}(\tau_{k})=0$ and $\dot Z_{ij}(\tau_{k}^{+})=0$, hence from
\eqref{Xprime}:
\begin{equation}%
\begin{split}
&  Z^{\prime}_{ij}(\tau_{k}^{+})=Z^{\prime}_{ij}(\tau_{k}^{-})+\Big[\dot
Z_{ij}(\tau_{k}^{-})-\dot Z_{ij}(\tau_{k}^{+})\Big]\tau^{\prime}_{k}\\
&  =Z^{\prime}_{ij}(\tau_{k}^{-})+\tilde\mu_{ij}(\tau_{k})p_{ij}(\tau_{k}%
)\tau^{\prime}_{k}=Z^{\prime}_{ij}(\tau_{k}^{-})
\end{split}
\end{equation}
and in $[\tau_{k},\tau_{k+1})$ , we have $\frac{d}{dt}Z^{\prime}_{ij}(t)=0$
using \eqref{dzdt} knowing $p_{ij}(\tau_{k})=p_{\!_{Bj}}(\tau_{k})=0$. For
$Z_{rl}(t)$, $r \neq i$ or $l \neq j$, the dynamics of $Z_{rl}(t)$ are not
affected at $\tau_{k}$ hence \eqref{Zrlplus} holds and in $[\tau_{k}%
,\tau_{k+1})$ again we can use \eqref{dzdt} with $i$ and $j$ replaced by $r$
and $l$.\newline5. \textbf{Event $\delta_{ij}^{0}$}: This event causes a
transition from $D^{+}_{ij}(t)>0$ for $t<\tau_{k}$ to $D^{+}_{ij}(t)=0$ for to
$t\ge\tau_{k}$. The event is the moment that agent $j$ enters target $i$'s
range. The switching function is $g_{k}(\Theta,\mathbf{X})=d_{ij}(t)-r_{ij}$.
From \eqref{tauprime} we can get ${\tau_{k}}^{\prime}$ from \eqref{tauprime4}.
If no other agent is already connected to target $i$, agent $j$ connects to
it. Otherwise, if another agent is already connected to target $i$, no
connection is established. For $X_{i}(t)$, the dynamics in \eqref{Xdot} are
not affected in both cases, hence, \eqref{xiplus} holds. If $X_{i}(t)>0$ in
$[\tau_{k},\tau_{k+1})$ we calculate $\frac{d}{dt}X^{\prime}_{i}(t)$ using
\eqref{dxdt} with $l$ being the appropriate connected agent to target $i$. If
$X_{i}(\tau_{k}^{-})=0$, $\frac{d}{dt}X^{\prime}_{i}(t)=0$. For $X_{r}(t)$, $r
\ne i$ the dynamics in \eqref{Xdot} are not affected by the event at $\tau
_{k}$. Hence, we get $X_{r}^{\prime}(\tau_{k}^{+})$ from \eqref{Xrprime0}. If
$X_{r}(\tau_{k})>0$ we calculate $\frac{d}{dt}X^{\prime}_{r}(t)$ from
\eqref{dxdt} with $i$ replaced by $r$ and if $X_{r}(\tau_{k})=0$ then
$\frac{d}{dt}X^{\prime}_{r}(t)=0$.\newline For $Y_{r}(t)$, $r=1,\dots,M$ again
the dynamics in \eqref{Ydot} are not affected at $\tau_{k}$ so both
\eqref{yrplus} and \eqref{dydt} hold.\newline For $Z_{ij}(t)$, with agent $j$
being connected or not to target $i$ at $\tau_{k}$ the dynamics of $Z_{ij}(t)$
are unaffected at $\tau_{k}$, hence \eqref{Zrlplus} holds for $i$ and $j$ and
in $[\tau_{k},\tau_{k+1})$ the $\frac{d}{dt}Z^{\prime}_{ij}(t)$ is calculated
through \eqref{dzdt}. For $Z_{rl}(t)$, $r \neq i$ or $l \neq j$ the dynamics
are unaffected \eqref{Zrlplus} holds again. In $[\tau_{k},\tau_{k+1})$,
$\frac{d}{dt}Z^{\prime}_{rl}(t)$ is given through \eqref{dzdt} with $i$ and
$j$ replaced by $r$ and $l$.\newline6. \textbf{Event $\Delta_{j}^{+}$}: This
event causes a transition from $D^{+}_{\!{Bj}}(t)=0$ for $t\le\tau_{k}$ to
$D^{+}_{\!{Bj}}(t)\ge0$ for $t> \tau_{k}$. The switching function is
$g_{k}(\Theta,\mathbf{X})=d_{\!_{Bj}}(t)-r_{\!_{Bj}}$.
\begin{equation}
{\tau_{k}}^{\prime}=-\frac{\partial D{\!_{Bj}}}{\partial s_{j}}s^{\prime}%
_{j}(\tau_{k})\Big(\frac{\partial d_{\!_{Bj}}}{\partial s_{j}}\dot s_{j}%
(\tau_{k})\Big)^{-1} \label{tauprime6}%
\end{equation}
Similar to the previous event, the dynamics of $X_{i}(t)$ are unaffected at
$\tau_{k}$ hence, we have $X_{i}^{\prime}(\tau_{k}^{+})$ calculated from
\eqref{xiplus}. If $X_{i}(t)>0$ in $[\tau_{k},\tau_{k+1})$ we calculate
$\frac{d}{dt}X^{\prime}_{i}(t)$ through \eqref{dxdt} and if $X_{i}(\tau
_{k}^{-})=0$, $\frac{d}{dt}X^{\prime}_{i}(t)=0$.\newline For $Y_{r}(t)$,
$r=1,\ldots,,M$, the dynamics of $Y_{r}(t)$ in \eqref{Ydot} are not affected
at $\tau_{k}$, hence, we get $Y_{r}(\tau_{k}^{+})$ from \eqref{yrplus} and in
$[\tau_{k},\tau_{k+1})$, $\frac{d}{dt}Y^{\prime}_{r}(t)$ is calculated from
\eqref{dydt}.\newline For $Z_{ij}(t)$, Using the fact that agent $j$ can only
be connected to one target or the base, we have $\dot Z_{ij}(\tau_{k}%
^{-})=\beta_{ij}(\tau_{k})p_{\!_{Bj}}(\tau_{k})$ with $p_{\!_{Bj}}(\tau
_{k})=0$ and $\dot Z_{ij}(\tau_{k}^{+})=0$, hence \eqref{Zrlplus} holds with
$i$ and $j$ replacing $r$ and $l$. In $[\tau_{k},\tau_{k+1})$ from
\eqref{dXdtprime}:
\begin{equation}%
\begin{split}
&  \frac{d}{dt}Z^{\prime}_{ij}(t)=\frac{\partial\dot Z_{ij}(t)}{\partial
Z_{ij}(t)}Z^{\prime}_{ij}(t)+\dot Z^{\prime}_{ij}(t)\\
&  =\dot Z^{\prime}_{ij}(t)=-\beta_{ij} P^{\prime}_{\!_{Bj}}(t)
\end{split}
\label{ZprimetB}%
\end{equation}
As for $Z_{rl}(t)$, $r \neq i$ or $l \neq j$ the dynamics are unaffected so
\eqref{Zrlplus} holds. In $[\tau_{k},\tau_{k+1})$ we can calculate $\frac
{d}{dt}Z^{\prime}_{rl}(t)$ through \eqref{dzdt} with $j$ replacing
$l$.\newline7. \textbf{Event $\Delta_{j}^{0}$}: This event causes a transition
from $D^{+}_{\!{Bj}}(t)>0$ for $t<\tau_{k}$ to $D^{+}_{\!{Bj}}(t)=0$ for
$t\ge\tau_{k}$. The switching function is $g_{k}(\Theta, \mathbf{X}%
)=d_{\!_{Bj}}(t)-r_{\!_{Bj}}$. Using \eqref{tauprime} we can get ${\tau_{k}%
}^{\prime}$ from \eqref{tauprime6}. Similar with the previous event we have
$X_{i}^{\prime}(\tau_{k}^{+})$ from \eqref{xiplus}. If $X_{i}(t)>0$ we can get
$\frac{d}{dt}X^{\prime}_{i}(t)$ from \eqref{dxdt} and if $X_{i}(\tau_{k}%
^{-})=0$ then $\frac{d}{dt}X^{\prime}_{i}(t)=0$.\newline For $Y_{r}(t)$,
$r=1,\ldots,,M$, we again follow the previous event analysis so \eqref{yrplus}
and \eqref{dydt} hold.\newline For $Z_{ij}(t)$, the analysis is similar to
event $\Delta_{j}^{+}$ so we can calculate $Z_{ij}^{\prime}(\tau_{k}^{+})$ and
$\frac{d}{dt}Z^{\prime}_{ij}(t)$ in $[\tau_{k},\tau_{k+1})$ from
\eqref{zijplus} and \eqref{dzdt} respectively. Also for $Z_{rl}(t)$, $r \neq
i$ or $l \neq j$, \eqref{Zrlplus} holds with same reasoning as previous event.
In $[\tau_{k},\tau_{k+1})$ we calculate $\frac{d}{dt}Z^{\prime}_{rl}(t)$ from \eqref{dzdt}.

\section{Objective function gradient}

\label{Jgradient} From \eqref{gradJ} we have:
\begin{equation}%
\begin{split}
&  \nabla\mathcal{L}(\Theta,T; \mathbf{X}(\Theta; 0)))=\frac{1}{T}%
\Big[\sum\limits_{k=0}^{K}\int_{\tau_{k}}^{\tau_{k+1}}\Big(q\nabla
\mathcal{L}_{1}(\Theta,t)\\
&  -(1-q)\nabla\mathcal{L}_{2}(\Theta,t)+\nabla\mathcal{L}_{3}(\Theta
,t)+\nabla\mathcal{L}_{4}(\Theta,t)\Big)dt\Big]\\
&  +\nabla\mathcal{L}_{f}(\Theta,T)
\end{split}
\end{equation}
We calculate each term separately:
\begin{equation}%
\begin{split}
\nabla\mathcal{L}_{1}(\Theta,t)=\frac{1}{M_{X}}\sum\limits_{i=1}^{M}\alpha
_{i}X^{\prime}_{i}(t)
\end{split}
\end{equation}
\begin{equation}%
\begin{split}
\nabla\mathcal{L}_{2}(\Theta,t)=\frac{1}{M_{Y}}\sum\limits_{i=1}^{M}\alpha
_{i}Y^{\prime}_{i}(t)
\end{split}
\end{equation}
\begin{equation}
\resizebox{0.99 \columnwidth}{!}{$
\begin{split}
\nabla &  \mathcal{L}_{3}(\Theta,t)=\\
&  \frac{1}{M_{I}I_{j}(t)}\Bigg({d_{\!_{Bj}}^{+}}^{\prime}(t)\prod_{i=1}%
^{M}d_{ij}^{+}(t)+d_{\!_{Bj}}^{+}(t)\sum_{l=1}^{M} {d_{lj}^{+}}^{\prime
}(t)\prod_{i=1,i\ne l}^{M}d_{ij}^{+}(t)\Bigg)
\end{split}$}
\end{equation}
\begin{equation}
\resizebox{0.99 \columnwidth}{!}{$
\begin{split}
\nabla\mathcal{L}_{4}  &  (\Theta,t)=\frac{1}{M_{R}}\sum_{j=1}^{N}
\Big[\int_{S} \bigg(R(w,t)+ R_{\!_{Bj}}(w,t)\bigg)P_{j}^{\prime}(w,t)dw\\
&  \qquad+\int_{S}\bigg(R^{\prime}(w,t)+ R_{\!_{Bj}}^{\prime}(w,t)\bigg)P_{j}%
(w,t)dw\Big]\\
&  =\frac{1}{M_{R}}\sum_{j=1}^{N} \Big[\int_{S}\bigg(R(w,t)+ R_{\!_{Bj}%
}(w,t)\bigg)2\langle s_{j}(t)-w, s_{j}^{\prime}(t)\rangle dw\\
&  \qquad+\int_{S}\bigg(\sum_{i=1}^{M}\frac{\alpha_{i} X_{i}^{\prime}%
(t)}{d_{i}^{+}(w)}+\frac{\sum_{i=1}^{M} \alpha_{i} Z_{ij}^{\prime}}{d_{\!_{B}%
}^{+}(w)}\bigg)P_{j}(w,t)dw\Big]
\end{split}$}
\end{equation}
\begin{equation}%
\begin{split}
\nabla\mathcal{L}_{f}(\Theta,T)=\frac{1}{M_{Z}}\sum\limits_{i=1}^{M}\alpha
_{i}Z^{\prime}_{ij}(T)
\end{split}
\end{equation}
\end{appendices}

\bibliographystyle{hieeetr}
\bibliography{Ref_DH}

\end{document}